\newtheorem{theorem}{Theorem}
\newtheorem{definition}{Definition}
\newtheorem{lemma}{Lemma}
\newtheorem{corollary}{Corollary}
\newtheorem{conjecture}{Conjecture}
\newtheorem{claim}{Claim}
\newtheorem{observation}{Observation}
\newcommand{\per}{{\rm per}}
\newcommand{\qed}{\hfill\rule{0.5em}{0.809em}}
\newenvironment{proof}{
\par
\noindent {\bf Proof.}\rm}{\mbox{}\hfill\rule{0.5em}{0.809em}\par}
\begin{document}
\title{Every nice graph 
	%with no isolated edges 
	is   $(1,5)$-choosable}
\author{Xuding Zhu\thanks{Grant Numbers: NSFC  11971438,
		U20A2068. ZJNSFC   LD19A010001. E-mail: xdzhu@zjnu.edu.cn}}

\affil{Department of Mathematics, Zhejiang Normal University, Jinhua 321004, China}

%\date{2015.8.11}
%\date{2015.8.15}

\maketitle

\begin{abstract}

A graph $G=(V,E)$ is total weight $(k,k')$-choosable   if the following holds: For any list assignment  $L$ which assigns to each vertex
$v$   a set $L(v)$ of $k$ real numbers, and assigns to each edge
$e$ a set $L(e)$ of $k'$ real numbers, there is a proper $L$-total weighting, i.e., a map
 $\phi: V  \cup E \to \mathbb{R}$ such that $\phi(z) \in L(z)$ for $z \in V \cup E$,
and $\sum_{e \in E(u)}\phi(e)+\phi(u) \ne \sum_{e \in
E(v)}\phi(e)+\phi(v)$ for every edge $\{u,v\}$. A graph is called nice if it contains no isolated edges. As a strengthening of the famous 1-2-3 conjecture, it was conjectured in [T. Wong and X. Zhu,  Total  weigt choosability of graphs, J. Graph Th. 66 (2011),198-212] that every nice  graph  is total weight $(1,3)$-choosable. The problem whether there is a constant $k$ such that every nice graph   is total weight $(1,k)$-choosable remained open for a decade and was recently solved by Cao [L. Cao, Total weight choosability of graphs: Towards the 1-2-3 conjecture, J. Combin. Th. B, 149(2021), 109-146], who proved that every nice graph is total weight $(1, 17)$-choosable. 
This paper improves this result and proves that every nice graph    is total weight
$(1, 5)$-choosable.
\end{abstract}
{\small \noindent{{\bf Key words: }  Total weight choosability;  
1-2-3 conjecture, inner product, Combinatorial Nullstellensatz, Permanent.}

\section{Introduction}

Assume $G=(V,E)$ is a graph. In the following, we assume $V=\{1,2,\ldots, n\}$, and each edge    is a 2-subset $e=\{i,j\}$ of $V$. 
%So $G$ has no loops, however, multiple edges are allowed.
%Vertices of $G$ will be referred to as $i,j$, etc.
For $i \in V$, let $E(i)$ be the set of edges incident to $i$. 
 A {\em total weighting} of   $G$ is a mapping
$\phi: V  \cup E \to \mathbb{R}$. A total weighting $\phi$ is {\em proper}
if for any edge $\{i,j\}$ of $G$,
$$\sum_{e \in E(i)}\phi(e) + \phi(i) \ne \sum_{e\in E(j)}\phi(e) + \phi(j).$$
 
A proper total weighting $\phi$ with $\phi(i)=0$ for all vertices $i$ is also called
a {\em vertex colouring edge weighting}.  We say a graph admits a {\em vertex colouring $k$-edge weighting} if there is a vertex colouring edge weighting using weights from $\{1,2,\ldots, k\}$.
 Karo\'{n}ski,  {\L}uczak and  Thomason   \cite{KLT2004} first studied vertex colouring edge weighting of graphs.
 Observe that if $G$ has an isolated edge, then $G$ does not have a vertex colouring edge weighting. 
 Thus we restrict to graphs with no isolated edges, which we call {\em nice graphs}.  
 Karo\'{n}ski,  {\L}uczak and  Thomason   \cite{KLT2004} conjectured that every nice graph $G$   has a vertex colouring $3$-edge weighting.
This conjecture attracted considerable
attention, and is called the 1-2-3 conjecture.
It is not obvious that there is a constant $k$ such that 
every nice graph admits a vertex colouring  $k$-edge weighting.  
 Karo\'{n}ski,  {\L}uczak and  Thomason   \cite{KLT2004} proved that there exists 183 real numbers   so that every nice graph has a vertex colouring edge weighting using the 183 real numbers as weights. Then Addario-Berry,
 Dalal, McDiarmid, Reed and Thomason \cite{Add2007} showed that every nice graph admits a vertex colouring  $30$-edge weighting. The upper bound on $k$ was further reduced to $16$ by 
   Addario-Berry, Dalal and Reed \cite{ADR2008}, and to $13$ by   Wang and Yu \cite{WY2008}. The current known best result on 1-2-3 conjecture   was obtained by 
Kalkowski, Karo\'{n}ski   and Pfender
\cite{KKP10}, who   proved that  every nice graph $G$ admits a vertex colouring 5-edge weighting.

Total weighting of graphs was first studied by Przyby{\l}o and  Wo\'{z}niak   \cite{PW2010}.    They  defined
$\tau(G)$ to be the least integer $k$ such that $G$ has
  a proper total weighting $\phi$ with $\phi(z) \in \{1,2, \ldots, k\}$ for $z \in V \cup E$.
  They  conjectured  that $\tau(G)=2$ for all nonempty graphs $G$.
The best result concerning this conjecture was obtained by  Kalkowski   \cite{K2009}, 
who proved that every graph $G$ has a proper
total weighting $\phi$ with $\phi(i) \in \{1,2\}$ for $i \in V$
and $\phi(e) \in \{1,2,3\}$ for $e \in E(G)$.

The list version of edge weighting  of graphs was introduced by
Bartnicki,   Grytczuk and   Niwczyk in \cite{BGN09}. We say a graph $G$ is {\em edge-weight $k$-choosable} if for any list assignment $L$ which assigns to each edge $e$ a set $L(e)$ of $k$ real numbers as permissible weights, there exists an edge weighting $\phi: E(G) \to \mathbb{R}$ such that for any edge $e=\{i,j\}$,  $\sum_{e \in E(i)}\phi(e) \ne \sum_{e \in E(j)}\phi(e)$.  As a generalization of the 1-2-3 conjecture, Bartnicki,   Grytczuk and   Niwczyk proposed the following conjecture:

\begin{conjecture} 
	\label{edge3choosableconjecture}
	Every nice graph   is edge-weight   $3$-choosable.
\end{conjecture}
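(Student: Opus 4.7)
The plan is to attack the conjecture via Alon's Combinatorial Nullstellensatz, mimicking the algebraic framework that underlies the paper's $(1,5)$-choosability theorem. For a nice graph $G=(V,E)$ consider the edge polynomial
$$P_G\bigl(\{x_e\}_{e\in E}\bigr) \;=\; \prod_{\{i,j\}\in E}\Bigl(\sum_{e\in E(i)}x_e \;-\; \sum_{e\in E(j)}x_e\Bigr).$$
The variable $x_{\{i,j\}}$ cancels in its own factor, so each factor is a linear form in the variables indexed by the edges adjacent to but distinct from $\{i,j\}$. If for every nice $G$ one can exhibit an exponent vector $(d_e)_{e\in E}$ with $d_e\le 2$ and $\sum_e d_e = |E|$ whose coefficient in $P_G$ is nonzero, then Alon's theorem guarantees that any list assignment with $|L(e)|=3$ admits a proper selection, which is exactly edge-weight $3$-choosability.

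The combinatorial heart of the problem is to interpret and bound this coefficient. A monomial in the expansion of $P_G$ arises by choosing, for each edge $\{i,j\}$, one of its endpoints $v\in\{i,j\}$ together with an incident edge $e'\in E(v)\setminus\{ij\}$; the coefficient of $\prod_e x_e^{d_e}$ is then the signed count of such choices whose multiset of selected $e'$ realizes the vector $(d_e)$. This signed count is, up to sign, the permanent of an $|E|\times|E|$ signed incidence matrix built from $G$ and $(d_e)$. I would try exponent vectors built from a spanning tree together with a carefully chosen $2$-coloring of the non-tree edges --- a structure reminiscent of the ones deployed in the $(1,5)$-choosability proof --- and attempt an induction on $|E|$, peeling off pendant edges, pendant triangles, and small edge-cuts for which the permanent factors cleanly and the induction hypothesis applies on the residual graph.

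The hard part, and the reason this conjecture has resisted attack for over a decade, is that removing the vertex variables eliminates the algebraic slack exploited by the total-weight proofs. Rows of the relevant permanent matrix are no longer free: they are tied together through the incidence structure, and the rigidity this introduces creates systematic parity cancellations; balanced bipartite gadgets already produce zero coefficients for every uniform exponent vector. A successful argument would likely require a non-uniform, graph-adapted monomial chosen adaptively to avoid these cancellations, or a genuinely new permanent identity generalizing the inner-product machinery of Cao. This is where I expect a real proof to find its teeth, and I would not expect the outline above to close the conjecture without such a new algebraic ingredient.
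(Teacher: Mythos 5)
The statement you were asked to prove is Conjecture~\ref{edge3choosableconjecture}, which is \emph{open}: the paper does not prove it. What the paper actually proves is Theorem~\ref{thm-key}, that every nice graph is $(1,5)$-choosable, which yields edge-weight $5$-choosability (and, by the reduction sketched in the introduction, is equivalent to it for connected non-bipartite graphs). There is therefore no ``paper's own proof'' to compare against, and your submission is, correctly, a research sketch with an explicit admission that it does not close the conjecture. That honesty is the right call; no proof should have been produced here.

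Two small corrections to your diagnosis of the obstacle. First, the claim that ``removing the vertex variables eliminates the algebraic slack exploited by the total-weight proofs'' misreads the paper: for $(1,k)$-choosability one already has $K(i)=0$ for every vertex, so the polynomial actually used is exactly $P_G$ in the edge variables alone, with no vertex slack. The method you would be extending already lives in the same variable set; the real bottleneck is that the permanent/inner-product machinery (the $J$-covering family construction and Lemma~\ref{lemma-key}) currently forces some edges to carry exponent up to $4$, and it is not known how to drive that down to $2$. Second, your description of how a monomial of $P_G$ arises is slightly off: the variable $x_{\{i,j\}}$ does \emph{not} cancel in its own factor (it appears once in $\sum_{e\in E(i)}x_e$ and once in $\sum_{e\in E(j)}x_e$, with opposite signs, so its net coefficient is $0$ only if you define the factor as $(\sum_{e\in E(i)}x_e+x_i)-(\sum_{e\in E(j)}x_e+x_j)$ and then drop the vertex variables --- the edge variable $x_{\{i,j\}}$ indeed cancels, so on this point you are right, but the bookkeeping of ``choose an endpoint and an incident edge'' should exclude $\{i,j\}$ itself, as you say). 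The more substantive issue is that the signed-count/permanent interpretation is exactly what the paper's framework already encodes via $\per(C_G(K))$ and the inner product $\langle F_{A_G},F_{B_G[K]}\rangle$, so ``a genuinely new permanent identity'' is precisely what would be needed, and none is offered. In short: the conjecture remains open, your sketch is an accurate recounting of known obstacles rather than a proof, and you were right not to overclaim.
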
 

The list
version of total weighting of graphs was introduced independently by
Przyby{\l}o and Wo\'{z}niak \cite{PW2011}  and by Wong and Zhu in
\cite{WZ11}. Suppose $\psi: V  \cup E  \to \mathbb{N}^+$. 
A $\psi$-list assignment of $G$ is a mapping $L$ which
assigns to $z \in V \cup E$ a set $L(z)$ of $\psi(z)$ real
numbers. Given a total list assignment $L$, a proper $L$-total
weighting is a proper total weighting $\phi$ with $\phi(z) \in L(z)$
for all $z \in V \cup E$. We say $G$ is {\em total weight
$\psi$-choosable} ($\psi$-choosable for short) if for any $\psi$-list assignment $L$, there is a
proper $L$-total weighting of $G$. We say $G$ is total weight $(k,k')$-choosable ($(k,k')$-choosable for short)
if $G$ is $\psi$-total weight choosable, where $\psi(i)=k$ for $i
\in V(G)$ and $\psi(e) = k'$ for $e \in E(G)$. 

As strengthenings of the 1-2-3 conjecture  and the 1-2 conjecture, the following conjectures were proposed in \cite{WZ11}.

\begin{conjecture} 
	\label{13choosable conjecture}
	Every nice graph   is    $(1,3)$-choosable.
\end{conjecture}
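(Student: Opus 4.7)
The plan is to attack this conjecture through Alon's Combinatorial Nullstellensatz, reducing it to a purely algebraic question about a graph polynomial. Fix a list assignment $L$ with $|L(v)|=1$ and $|L(e)|=3$; the unique permitted value $\phi(v)$ at each vertex is forced, so writing $c_v:=\phi(v)$ and letting $y_e$ denote the unknown edge weight on $e$, the existence of a proper $L$-total weighting is equivalent to finding $y_e\in L(e)$ so that
$$P_G(y) \;=\; \prod_{\{u,v\}\in E}\Bigl( c_u - c_v \;+\; \sum_{e\in E(u)} y_e \;-\; \sum_{e\in E(v)} y_e \Bigr)$$
does not vanish. By the Combinatorial Nullstellensatz it then suffices to exhibit a monomial $\prod_e y_e^{d_e}$ with $d_e\le 2$ for every edge $e$, $\sum_e d_e = |E|$, and nonzero coefficient in $P_G$.

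Lower-degree monomials can inherit the constants $c_u-c_v$, but any full-degree monomial has its coefficient determined solely by the top-degree part
$$P_G^{\mathrm{top}} \;=\; \prod_{\{u,v\}\in E}\Bigl(\sum_{e\in E(u)}y_e - \sum_{e\in E(v)}y_e\Bigr),$$
so the vertex constants drop out of the relevant computation. Build the signed incidence matrix $M_G$ (one row per edge $\{u,v\}$ of $G$, columns indexed by edges, entries $+1$ on $E(u)$ and $-1$ on $E(v)$ after fixing an orientation on each edge); the coefficient of $\prod_e y_e^{d_e}$ in $P_G^{\mathrm{top}}$ is, up to sign and multinomial factors, a signed permanental sum that matches $|E|$ row-slots to edge-columns while respecting the prescribed multiplicities $d_e$. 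The task reduces to the combinatorial question: find a sequence $(d_e)_{e\in E}$ with $0\le d_e\le 2$ and $\sum d_e = |E|$ whose associated signed permanental sum is nonzero.

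My strategy would proceed in three stages. First, prove a structural lemma guaranteeing that every nice graph admits a decomposition of $E$ into three parts $E_0,E_1,E_2$ together with an orientation of $E_1\cup E_2$ that naturally yields a candidate vector $(d_e)$ with $d_e=j$ on $E_j$; the size equation $|E_1|+2|E_2|=|E|$ forces $|E_0|=|E_2|$, a constraint already reminiscent of Eulerian-type parity conditions on nice graphs. Second, expand the corresponding permanental sum and pair off cancelling contributions by a sign-reversing involution, leaving a residual set of orientation classes whose contributions share a common sign, in the spirit of Cao and of the permanent-method machinery underlying the present paper, but adapted to the tighter budget. Third, reduce inductively by removing leaves, suppressing long paths of degree-$2$ vertices, and peeling off pendant triangles, so that the structural lemma need only be verified on a $2$-edge-connected core of minimum degree $\ge 3$.

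The main obstacle is exactly the collapse of the exponent budget from $4$ (which suffices for the paper's $(1,5)$ result) down to $2$. At budget $4$ the constraint $\sum d_e=|E|$ leaves enormous slack: extra degree can be parked on edges incident to any vertex of degree $\ge 2$, so local obstructions are easily absorbed. At budget $2$ the average exponent per edge is exactly $1$, so $(d_e)$ must be almost perfectly balanced across the graph, and every deviation at one edge must be compensated at a nearby edge. Cancellations in the signed permanent that are benign under the looser budget become fatal here, and closing the gap appears to demand a genuinely new positivity certificate --- either a combinatorial model in which the surviving orientation classes all contribute nonnegatively, or a closed-form evaluation of the relevant permanent for a canonical family of submatrices. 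Lacking such a certificate, the polynomial method as presently developed seems to stall at budget $4$, and this is why the paper only reaches $(1,5)$ while the $(1,3)$-conjecture remains open.
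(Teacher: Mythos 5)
You have not given a proof; you have given a research programme, and you say as much in your final paragraph. The statement in question is stated in the paper only as a conjecture (Conjecture~2), and the paper does not prove it either --- it proves the weaker $(1,5)$-choosability. Your setup is sound and is in fact exactly the framework the paper uses: fixing the forced vertex weights, observing that the coefficient of any degree-$|E|$ monomial is governed by the top-degree part $\prod_{\{u,v\}\in E}\bigl(\sum_{e\in E(u)}y_e-\sum_{e\in E(v)}y_e\bigr)$, and reducing via the Combinatorial Nullstellensatz to finding exponents $d_e\le 2$ with $\sum_e d_e=|E|$ and a nonvanishing permanent $\per(C_G(K))$. This is precisely the paper's notion of algebraic $(1,3)$-choosability.

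The gap is that none of your three stages is carried out, and the missing content is exactly the hard part. Stage one asserts, without proof, a structural lemma producing a candidate partition $E_0,E_1,E_2$; the identity $|E_0|=|E_2|$ is a necessary counting condition, not a construction. Stage two invokes a sign-reversing involution and a residual class of common sign, but no involution is defined and no positivity statement is established --- this is the ``positivity certificate'' you yourself concede is lacking. Stage three (leaf removal, suppressing degree-$2$ paths, peeling pendant triangles) mirrors reductions that do appear in the paper (cf.\ its Lemma on degree-$1$ and degree-$2$ vertices), but those reductions only shift the burden to a core graph on which the permanent must still be shown nonzero. The paper's actual mechanism for nonvanishing is an inner-product/integral positivity argument ($\per(AB^*)=\langle F_A,F_B\rangle$ evaluated on the torus), and its covering-family construction inherently costs exponent budget $4$ on some edges; nothing in your proposal replaces that mechanism with one that fits within budget $2$. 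So the proposal correctly identifies the obstruction but does not overcome it, and the conjecture remains open.
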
 

\begin{conjecture} 
	\label{22choosable conjecture}
	Every graph  is     $(2,2)$-choosable.
\end{conjecture}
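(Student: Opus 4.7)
The plan is to attack this via the polynomial method and Combinatorial Nullstellensatz, the standard tool in this area. For a graph $G = (V, E)$, introduce variables $x_i$ for $i \in V$ and $y_e$ for $e \in E$, set $s_i = x_i + \sum_{e \in E(i)} y_e$, fix an arbitrary reference orientation of each edge, and consider the graph polynomial
\[
P_G = \prod_{\{i,j\} \in E}(s_i - s_j).
\]
To establish $(2,2)$-choosability it suffices, by Combinatorial Nullstellensatz, to exhibit for each graph $G$ a monomial of $P_G$ in which every variable appears to degree at most $1$ and whose coefficient is nonzero.

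Since $P_G$ has total degree $|E|$, such a monomial is indexed by a subset $S \subseteq V \cup E$ with $|S| = |E|$. Expanding the product, one picks, for each edge $e = \{i,j\}$, first an endpoint (contributing a sign) and then either the vertex variable or an edge variable at that endpoint. The coefficient of $\prod_{z \in S} z$ is therefore a signed sum over all valid ``decorated orientations''. The first technical step is to rewrite this sum as a permanent of an auxiliary $|E| \times |E|$ matrix $M_{G,S}$ whose rows are indexed by edges and columns by elements of $S$, thereby reducing $(2,2)$-choosability to a combinatorial nonvanishing condition on $(G, S)$ entirely analogous to the permanent-index framework used for the $(1,5)$-result.

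The core of the plan is then to construct $S$ adaptively from the structure of $G$: decompose $G$ along an ear or block decomposition and assign to $S$, at each stage, either a vertex or one of its incident edges, aiming for $M_{G,S}$ to be block-triangular with nonzero diagonal blocks. Because both vertices and edges carry lists of size $2$, this construction enjoys real flexibility unavailable in the $(1,k')$-setting; in particular, the freedom to toggle between a vertex variable and an incident edge variable is meant to absorb exactly the local parity obstructions that otherwise cause the permanent to vanish.

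The main obstacle, and the reason this conjecture has resisted more than a decade of work, is that $\per(M_{G,S})$ can vanish for subtle global reasons even after every local obstruction has been neutralized: the signed sum over decorated orientations admits massive cancellations for graphs rich in short even cycles, and no uniform rule for choosing $S$ is evidently safe. Already for cubic graphs or for $K_{n,n}$ with $n$ even, ruling out such cancellations seems to demand either a finer algebraic invariant than the permanent index, for instance an analysis of $P_G$ modulo the $2$-truncation ideal generated by $\{z^2 : z \in V \cup E\}$, or else a reduction to a finite list of minimal obstructions that can be dispatched by direct computation. Producing such an invariant or such a reduction is where the decisive work must happen.
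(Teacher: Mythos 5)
This statement is Conjecture~\ref{22choosable conjecture}, which the paper explicitly leaves open (``Conjectures \ref{edge3choosableconjecture}, \ref{13choosable conjecture} and \ref{22choosable conjecture} all remain open and seem to be difficult''); the strongest known result in this direction is the $(2,3)$-choosability of all graphs from \cite{WZ2012}. Your text is therefore being measured against a statement that has no proof in the paper, and, to your credit, you do not claim to have one: your final paragraph concedes that the decisive step --- ruling out cancellation in the signed sum, i.e., showing $\per(M_{G,S})\neq 0$ for some admissible $S$ --- is exactly what is missing. So the proposal is a research plan, not a proof, and the gap is the entire substance of the conjecture.

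Concretely: the setup in your first two paragraphs is correct and standard. Writing $\tilde{P}_G=\prod_{\{i,j\}\in E}(s_i-s_j)$ and invoking Combinatorial Nullstellensatz does reduce $(2,2)$-choosability to exhibiting a multilinear monomial $\prod_{z\in S}x_z$, $S\subseteq V\cup E$, $|S|=|E|$, with nonzero coefficient, and that coefficient is (up to the factor $1/K!$, which is $1$ here) a permanent of a $|E|\times|E|$ submatrix of the coefficient matrix --- this is precisely the framework of equations (\ref{eqn1})--(\ref{eqn1b}) in the paper, specialized to $0$-$1$ exponent vectors. But the third paragraph is where a proof would have to live, and it contains no construction: ``decompose along an ear or block decomposition and aim for block-triangularity'' is not accompanied by any rule for choosing $S$, any verification that the diagonal blocks are nonsingular, or any treatment of the base cases. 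The paper's own machinery (the $J$-covering families of Lemma~\ref{lemma-key} and the inner-product positivity argument of Lemma~\ref{lem3}) is built specifically for exponent vectors supported on edges with entries up to $4$ or $5$; nothing in it forces a \emph{multilinear} monomial, and the positivity argument degrades precisely when one cannot afford multiplicities. If you want to pursue this, the honest next step is either to prove the nonvanishing for a restricted class (trees and complete graphs are known cases) or to identify the ``finer invariant'' you allude to; as written, the statement remains unproved.
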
 

It follows from the definition that every $(1,k)$-choosable graph is edge-weight $k$-choosable, and it is also easy to verify that a graph $G$ is $(k,1)$-choosable if and only if $G$ is $k$-choosable (i.e., for every list assignment $L$ which assigns to each vertex $v$ a set $L(v)$ of $k$ permissible colours, there is a proper colouring $\phi$ of $G$ with $\phi(v) \in L(v)$ for each vertex $v$). So the concept of  total weight choosability put edge weight choosability and vertex choosability of graphs in a same framework. 

For bipartite graphs, $(1,k)$-choosable is strictly stronger than edge-weight $k$-choosable.   For example, it is easy to verify that a path of length 3 is  edge-weight 2-choosable but not $(1,2)$-choosable. However, for connected non-bipartite graphs $G$, 
$(1,k)$-choosable is equivalent to edge-weight $k$-choosable. Assume $G$ is an  edge-weight $k$-choosable connected non-bipartite graph and $L$ is a $(1,k)$-list assignment of $G$.
We show that $G$ has a proper total $L$-weighting. If $L(i)=\{0\}$ for all vertices $i$, then $G$ has a proper total $L$-weighting.  Assume  $L(i) = \{a\}$ for some non-zero real number $a$. Let $W=(i,e_1,j_1,e_2,j_2, \ldots, e_{2k}, j_{2k}, e_{2k+1}, i)$ be a odd length closed walk containing $i$. Let $L'(i)=\{0\}, L'(e_{2i+1}) = \{s+ a/2: s \in L(e_{2i+1})\}$ and 
$L'(e_{2i}) = \{s- a/2: s \in L(e_{2i})\}$, and let $L'(z) = L(z)$ for remaining vertices and edges $z$.
It is easy to verify that $G$ has a proper total $L$-weighting if and only if $G$ has a proper total $L'$-weighting. By repeating this process, we obtain a $(1,k)$-list assignment $L^*$ with $L^*(i)=\{0\}$ for every vertex $i$, and   $G$ has a proper total $L$-weighting if and only if $G$ has a proper total $L^*$-weighting. As $G$ is edge-weight $k$-choosable, we know that $G$ has a proper total $L^*$-weighting, and hence  $G$ has a proper total $L$-weighting.  

Thus Conjecture \ref{13choosable conjecture} implies Conjecture \ref{edge3choosableconjecture}, and they are equivalent for connected non-bipartite graphs. 

Conjectures \ref{edge3choosableconjecture}, \ref{13choosable conjecture} and \ref{22choosable conjecture}  all remain open and seem to be  difficult. As weakenings of these conjectures, the following conjectures  were proposed in \cite{WZ11}.

\begin{conjecture} 
	\label{kk'choosable conjecture}
	There are  constants $k, k'$ such that
	\begin{enumerate}
		\item [(A)] every graph  is   $(k,k')$-choosable.
		\item[(B)]    every nice graph is  $(1,k')$-choosable.
		\item[(C)]  every graph  is    $(k,2)$-choosable. 
	\end{enumerate} 
\end{conjecture}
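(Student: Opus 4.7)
The plan is to reduce parts (A) and (C) to part (B) as far as possible, then attack (B) directly by the polynomial method (Combinatorial Nullstellensatz) together with a refined permanent calculation, as the keywords of the paper suggest.

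Part (A) follows from (B) with essentially no new work: an isolated edge $\{u,v\}$ imposes only the single constraint $\phi(u)\ne\phi(v)$, so once $k\ge 2$ the isolated edges can be disposed of greedily and (B) applied to the remaining nice graph; any $k'$ that works for (B) then yields $(2,k')$ for (A). Part (C) is more subtle. The translation trick already used in the introduction (shifting edge weights by $\pm a/2$ along an odd closed walk) suggests that vertex freedom can absorb a certain amount of edge freedom, and I would try to push this as far as possible; however, on bipartite graphs no odd walk exists, so a genuinely different argument seems necessary, and I would expect (C) to remain the most delicate of the three parts.

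The substantive work lies in (B). For a nice graph $G$ attach a variable $x_i$ to each vertex and $y_e$ to each edge, and form
$$P_G = \prod_{\{i,j\}\in E}\Bigl(x_i + \sum_{e\in E(i)} y_e - x_j - \sum_{e\in E(j)} y_e\Bigr).$$
Since the vertex weights are forced (as $k=1$), it suffices by the Combinatorial Nullstellensatz to exhibit, for each $G$, a monomial $\prod_e y_e^{d_e}$ with every $d_e\le 4$ whose coefficient in $P_G$ (evaluated at the prescribed vertex values) is nonzero. This coefficient equals, up to sign, the permanent of a signed incidence-type matrix $M_G(d)$. So the central question becomes: for every nice graph $G$, is there an exponent vector $d=(d_e)_{e\in E}$ with $\max_e d_e\le 4$ and $\per M_G(d)\ne 0$?

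I would tackle this inductively, locating in $G$ a local reducible configuration (a pendant, a short pendant path, a low-degree vertex in a specific neighbourhood, or a small block), computing its permanent contribution in closed form, and combining it with the inductive permanent of the smaller nice graph via a Laplace-type expansion. The hard part, and the main obstacle, is exactly this inductive step: one needs (i) a structure theorem asserting that every nice graph contains one of a short list of such reducible configurations, and (ii) for each configuration, a degree assignment and a sign analysis guaranteeing that the combined permanent does not vanish while every exponent stays at most $4$. Cao's constant $17$ corresponds to a coarser decomposition; pushing the bound down to $5$ will demand sharper structural lemmas (especially to handle cubic and near-cubic vertices, where the algebra is tightest) and a careful cancellation analysis inside the permanent in each reducible case.
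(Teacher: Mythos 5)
Your reduction of (A) to (B) is correct and matches the paper's own remark that (B) is stronger than (A): an isolated edge $\{u,v\}$ only requires $\phi(u)\ne\phi(v)$, which is trivial once $k\ge 2$, and such $K_2$-components are independent of the rest of the graph. You also correctly recognize that (C) is genuinely open and is not addressed by the paper, so only (B) carries real content here.

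For (B), your setup --- Combinatorial Nullstellensatz, translating the target coefficient into a permanent of a signed incidence-type matrix, and seeking an exponent vector $d$ with $\max_e d_e\le 4$ making $\per M_G(d)\ne 0$ --- is the right frame and matches the paper. But the core strategy you propose for realizing this (a short list of local reducible configurations such as pendants and low-degree vertices, closed-form permanent contributions, a Laplace-type expansion with a sign/cancellation analysis) is not what the paper does, and I do not see how it would close the gap down to a bound as small as $5$. The paper's reduction is global, not local: it selects a modified maximal independent set $J$ (a ``good subset''), partitions $E$ according to its relation to $J$, and builds a multi-family $\mathcal{C}$ of short paths and odd closed walks (a ``$J$-covering family'') whose carefully controlled multiplicities directly give the exponent bound. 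The induction removes \emph{all} edges meeting $J$ at once, not a small local configuration. More importantly, the non-vanishing of the permanent is not argued by any sign or cancellation analysis. The paper rewrites $\per(C_G(K))$ as a Hermitian inner product $\langle F_{A_G}, F_{B_G[K]}\rangle$ on homogeneous polynomials and then uses Cao's analytic device --- an integral representation of a closely related inner product over the $n$-torus --- to show the relevant quantity is strictly positive (or strictly negative), together with a linear-algebraic step (Claim 1 / Lemma 9) to transfer between the two inner products. This positivity argument is the decisive ingredient that your plan does not anticipate, and it is exactly what lets one avoid the cancellation difficulties that would plague a direct Laplace-expansion-plus-signs approach to the permanent.
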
 

(A), (B), (C) are indeed  three conjectures.  
 Each of (B) and (C) is stronger than (A),   (B) and (C) seem to be independent of each other.

The list version of total weighting of graphs also received considerable attention [6-10,  13-15,17-21,24-27]. %\cite{BGN09,Cao,CDWZ2017,DDWWWYZ2019,Grytczuk2021,LWZ2018,LLM2020,PRW2017,PW2011,Sea2014,TWZ2016,TZ2017,2014wy,Wong2021,WZ11,WZ2017,ZW2012}.
(A)  was confirmed in  \cite{WZ2012}, where it was shown that every graph is $(2,3)$-choosable.  (B)   remained open for a decade, and was recently confirmed by Cao      \cite{Cao}.  With a novel application of real analysis to this combinatorial problem,  Cao proved  that every nice graph    is $(1,17)$-choosable. 
(C)  remains open. 

In this paper, using the tools developed in \cite{Cao}, 
we  prove the result stated in the title.

\begin{theorem}
	\label{thm-key}
	Every nice graph is $(1,5)$-choosable.
\end{theorem}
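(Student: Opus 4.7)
The plan is to invoke the Combinatorial Nullstellensatz. Given any $(1,5)$-list assignment $L$ on a nice graph $G=(V,E)$, let $a_v$ denote the unique element of $L(v)$ and, for each edge $e$, let $x_e$ be a variable ranging over $L(e)$. Orient each edge arbitrarily and consider the polynomial
$$P(\mathbf{x})=\prod_{\{i,j\}\in E}\Bigl((a_i-a_j)+\sum_{e\in E(i)}x_e-\sum_{e\in E(j)}x_e\Bigr).$$
A proper $L$-total weighting exists iff some assignment $x_e\in L(e)$ makes $P(\mathbf{x})\ne 0$. Since each list has size $5$, Alon's theorem reduces the task to producing an exponent vector $(d_e)_{e\in E}$ with $0\le d_e\le 4$ and $\sum_e d_e=\deg P=|E|$ such that $[\prod_e x_e^{d_e}]P\ne 0$. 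Only the top-homogeneous part of $P$ contributes to such a coefficient, which eliminates the dependence on the $a_v$'s and reduces the theorem to a purely combinatorial nonvanishing statement about $G$.

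Expanding the top-homogeneous part, the coefficient of $\prod_e x_e^{d_e}$ becomes a signed sum: for each factor (edge $\{i,j\}$) choose an endpoint, and then an edge $e'$ incident to that endpoint, with the overall constraint that each $e'$ is picked exactly $d_{e'}$ times. Cao's \emph{permanent index} $\pind$ captures precisely such sums, and the key tool of \cite{Cao} interprets $\pind$ as an inner product in an appropriate real function space, reducing nonvanishing to a positivity estimate. In her proof of $(1,17)$-choosability, conservative choices of $d_e$ leave comfortable slack in that estimate; the improvement pursued here is to design a sharper exponent vector with $d_e\le 4$ that still keeps the inner product nonzero.

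The implementation would proceed by first performing structural reductions: reduce to $2$-edge-connected nice graphs by handling bridges and pendant structures separately, and catalogue a small list of base cases (short odd cycles, very small graphs) that survive these reductions, to be resolved by direct computation. Within a $2$-edge-connected block I would choose $(d_e)$ by an Eulerian-type argument, pairing edges around each vertex so that the local exponent pattern matches a natural balanced distribution compatible with Cao's inner-product positivity. Vertex by vertex, one then verifies that the resulting local contribution has the correct sign, using small ad-hoc modifications near low-degree vertices to repair any imbalance without exceeding the cap $d_e\le 4$.

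The main obstacle is the tightness of the constraint $d_e\le 4$. Each edge now contributes very little room to the permanent-index sum, so Cao's positivity margin nearly vanishes. The delicate situations are vertices of degree $2$ and $3$, where only a few local exponent patterns are feasible and one must ensure that none of them cancel the inner product. I expect a discharging or local-exchange argument will be needed to redistribute exponent budget between adjacent vertices so that every local configuration falls into a verified list of positive patterns; absent such a sharp local analysis, the Combinatorial Nullstellensatz approach would only recover Cao's bound.
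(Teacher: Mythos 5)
Your proposal correctly identifies the general framework used by the paper (Combinatorial Nullstellensatz reduces the problem to finding an exponent vector $K$ with $K(e)\le 4$ and $\mathrm{coe}(x^K,P_G)\ne 0$, and Cao's inner-product interpretation of that coefficient is the right tool for the nonvanishing step), but the actual proof strategy you sketch diverges from the paper's and, more importantly, does not come close to resolving the hard part. You propose to reduce to $2$-edge-connected graphs, then distribute exponents by an ``Eulerian-type argument'' at each vertex and fix low-degree vertices by discharging. None of this is carried out, and it is not clear it could be: the paper does not perform a block decomposition at all. Instead it runs induction by removing a \emph{good subset} $J\subseteq V$ built from a maximum independent set chosen to minimize the number of isolated edges in $G_{J,3}$, and the engine is a Key Lemma stating that if $K$ is ``sufficient'' for $G_{J,1}$ (the non-isolated part of $G-J$) and $\mathcal{C}$ is a $J$-covering family (a specific multiset of paths and odd closed walks covering $E\setminus E_{J,1}$ with parity and degree constraints), then $K+K_{\mathcal{C}}$ is sufficient for $G$. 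That lemma is where the inner-product positivity actually lives, and its proof requires turning each path of $\mathcal{C}_{J,3}$ into a factor $(x_{s(P)}\pm x_{t(P)})$ and each odd walk into a single variable $x_{j_e}$, so that the product collapses to $\prod(x_j-x_{j'})^{2t^+}(x_j+x_{j'})^{2t^-}$ whose integral against $\prod(x_jx_{j'})^{t^++t^-}$ is manifestly positive.

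The concrete gap in your write-up is therefore twofold. First, the statement ``reduce to $2$-edge-connected nice graphs by handling bridges and pendant structures separately'' is asserted, not argued; the choosability property is not obviously stable under gluing blocks along cut vertices (the lists interact across the cut vertex), and the paper deliberately avoids this by using a vertex-set removal instead. Second, and more seriously, you identify the tightness of $d_e\le 4$ as the main obstacle and then only say a discharging argument ``will be needed'' -- but this is precisely the content of the theorem. In the paper, the constraint $K_{\mathcal{C}}(e)\le 4$ is obtained by three structural lemmas specific to a minimum counterexample (no degree-$1$ vertex adjacent to a degree-$2$ vertex, no two adjacent degree-$2$ vertices, a bound on $d_{G_{J,3}}(i)$ when $i$ has a special neighbour) together with a carefully constructed \emph{good assignment} $\tau:J\to V-J$ that lets one delete redundant paths from the covering family. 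Without an analogue of these lemmas, your plan gives no way to bound the exponents. As written, the proposal is a research direction, not a proof, and the direction it points in is not the one the paper takes.
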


The proofs for the $(2,3)$-choosability of all graphs in \cite{WZ2012}, the $(1,17)$-choosability of all nice graphs in \cite{Cao} and the $(1,5)$-choosability of all nice graphs in this paper all use Combinatorial Nullstellensatz, and hence are all about algebraic choosabilities. 

\section{Algebraic total weight choosability}

We denote by $\mathbb{N}$ and $\mathbb{N}^+$ the set of non-negative integers and the set of positive integers, respectively. For   $m,n \in \mathbb{N}^+$, let $\mathbb{C}[x_1, x_2, \ldots, x_n]_m$ be the vector space of homogeneous polynomials of degree $m$ in variables $x_1, \ldots, x_n$ over the   field $\mathbb{C}$ of complex numbers.
Let $M_{m,n}(\mathbb{C})$ be the vector space of $m \times n$ matrices   with entries in $\mathbb{C}$.

For a finite set $E$, let 
$$\mathbb{N}^E = \{K: E \to \mathbb{N}  \}, \text{ and }  \mathbb{N}^E_m = \{K \in \mathbb{N}^E:  \sum_{e \in E} K(e)=m\}.$$ 
For 
$K \in \mathbb{N}^E$, let 
$$x^K = \prod_{e \in E}x_e^{K(e)}.$$
Let 
$$K!=\prod_{e \in E}K(e)!.$$
For $K, K'\in \mathbb{N}^E$, we write $K \le K'$ if $K(e) \le K'(e)$ for each $e \in E$. 

Given a  polynomial $P$, we denote the coefficient of the monomial  $x^K$ in the expansion of $P$ by 
$${\rm coe}(x^K, P).$$ 
 Let $${\rm mon}(P)=\{x^K: {\rm coe}(x^K,P) \ne 0\}.$$  
 
 Given a graph $G=(V,E)$, let 
 $$\tilde{P}_G(\{x_z: z \in V \cup E\}) = \prod_{\{i,j\} \in E, i < j}\left(  \left(\sum_{e \in E(i)} x_e+ x_i\right) - \left(\sum_{e \in E(j)} x_e+ x_j\right)\right).$$
 Assign a real number $\phi(z)$ to the variable $x_z$, and view
 $\phi(z)$ as the weight of $z$.
 Let $\tilde{P}_G( \phi  )$ be the evaluation of the
 polynomial at $x_z = \phi(z)$. Then $\phi$ is a proper total
 weighting of $G$ if and only if $\tilde{P}_G( \phi) \ne 0$. 
 
 It follows from Combinatorial Nullstellensatz that if $\prod_{z \in V \cup E}x_z^{K(z)} \in {\rm mon}(\tilde{P}_G)$ (note that $\tilde{P}_G$ is a homogeneous polynomial, and all the non-vanishing monomials are of the highest degree), and $|L(z)| \ge K(z)+1$ for some $K \in \mathbb{N}^{E \cup V}$, then $G$ has a proper total $L$-weighting. 
 
 \begin{definition}
 	 A  graph is said to be {\em algebraic total weight $(k,k')$-choosable} ({\em algebraic  $(k,k')$-choosable} for short) if $x^K=\prod_{z \in V \cup E}x_z^{K(z)} \in {\rm mon}(\tilde{P}_G)$
 	 for some $K \in \mathbb{N}^{E \cup V}_{|E|}$ with $K(i) < k$ for each vertex $i$ and $K(e) < k'$ for each edge $e$.
 \end{definition}
 
   The goal of this paper is to prove that every nice graph   is algebraic   $(1,5)$-choosable. Thus we restrict to monomials of the form $x^K=\prod_{e \in E}x_e^{K(e)}$ of $\tilde{P}_G$. For this purpose, we omit the variables $x_i$ for $i \in V$ and consider the following polynomial:

 $$P_G(\{x_e: e \in E\}) = \prod_{\{i,j\} \in E, i < j}\left(\sum_{e\in E(i)}x_{e} - \sum_{e\in E(j)}x_{e}\right).$$

\begin{definition}
	\label{def-suff}
	Assume $G=(V,E)$ is a graph and $K \in \mathbb{N}^E$. We say $K$ is {\em sufficient for $G$} if there exists $K' \in \mathbb{N}_{|E|}^E$ such that $K' \le K$ and $x^{K'} \in {\rm mon}(P_G)$. 
\end{definition}

Then we have the following observation.

\begin{observation}
	\label{obs}
	A  graph $G=(V,E)$ is   algebraic  $(1, b+1)$-choosable  if and only if there exists $K  \in \mathbb{N}^E$ such that $K$ is sufficient for $G$ and $K(e) \le b$ for  $e \in E$.
\end{observation}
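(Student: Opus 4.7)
The plan is to unfold the definitions on both sides and check that they describe the same condition, via a single algebraic bridge between $\tilde{P}_G$ and $P_G$.

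The bridge I would establish first is the following: for any monomial $x^K$ with $K(i)=0$ for every vertex $i \in V$, the coefficient of $x^K$ in $\tilde{P}_G$ equals its coefficient in $P_G$. This is clear from the factor-by-factor structure, since each factor $(\sum_{e \in E(i)} x_e + x_i) - (\sum_{e \in E(j)} x_e + x_j)$ of $\tilde{P}_G$ contributes to a pure edge monomial only through its edge terms, and the sum of those edge terms is precisely the corresponding factor of $P_G$. Equivalently, $P_G$ is obtained from $\tilde{P}_G$ by setting every vertex variable to zero. I would also note that $\tilde{P}_G$ is homogeneous of degree $|E|$, so every monomial in $\mathrm{mon}(\tilde{P}_G)$ has total degree $|E|$.

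Granted this bridge, algebraic $(1,b{+}1)$-choosability amounts to the existence of $K \in \mathbb{N}^E_{|E|}$ with $K(e) \le b$ and $x^K \in \mathrm{mon}(P_G)$: the constraint $K(i) < 1$ forces $K(i)=0$ on every vertex, so $\tilde{P}_G$ can be replaced by $P_G$. For the forward direction, such a $K$ already witnesses its own sufficiency (take $K' = K$ in Definition~\ref{def-suff}), and $K(e) \le b$ as required. For the backward direction, if $K \in \mathbb{N}^E$ is sufficient for $G$ with $K(e) \le b$, then the witness $K' \le K$ with $K' \in \mathbb{N}^E_{|E|}$ and $x^{K'} \in \mathrm{mon}(P_G)$ satisfies $K'(e) \le b$, and extending $K'$ to $V \cup E$ by $K'(i)=0$ produces, via the bridge, a monomial $x^{K'} \in \mathrm{mon}(\tilde{P}_G)$ of the required form.

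Since the entire statement is a definitional repackaging, I do not expect any real obstacle. The only substantive content is the bridge observation that monomials not involving any vertex variable pick up no contribution from the vertex terms in the factors of $\tilde{P}_G$, which in turn justifies restricting attention from $\tilde{P}_G$ to $P_G$ throughout the rest of the paper.
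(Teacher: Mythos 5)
Your proposal is correct and matches the paper's intent: the paper treats this as an immediate observation, implicitly relying on exactly your ``bridge,'' namely that $P_G = \tilde{P}_G\big|_{x_i = 0,\ i\in V}$ so that coefficients of pure edge monomials agree in the two polynomials. Unpacking the definitions with this identification, as you do, is precisely the argument the paper leaves unwritten.
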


Given a matrix $A=(a_{ij})_{m \times n}$, define a polynomial 
$$F_A(x_1, \ldots, x_n) = \prod_{i=1}^m \sum_{j=1}^n  a_{ij}x_j.$$
Given a graph $G=(V,E)$,
% and an orientation $D$ of $G$, 
let 
$C_G=(c_{ee'})_{e,e'\in E}$, where for $e=\{i,j\} \in E, i < j$, 
\[
c_{ee'} = \begin{cases} 1, &\text{ if $e'$ is adjacent with $e$ at $i$}, \cr
-1, &\text{ if $e'$ is adjacent with $e$ at $j$}, \cr
0, &\text{ otherwise.}
\end{cases}
\]
It is easy to verify that $$P_G=F_{C_G}.$$

For a square matrix $A=(a_{ij})_{n \times n}$,   the {\em permanent} $\per(A)$ of $A$ is defined as  
$$\per(A) = \sum_{\sigma}\prod_{i=1}^n a_{ i \sigma(i)},$$ where the summation is over all permutations $\sigma$ of 
$\{1,2,\ldots, n\}$.

For $A \in M_{m,n}(\mathbb{C})$, 
for $K   \in \mathbb{N}^n$ and $K'   \in \mathbb{N}^m$,   $A(K)$ denotes the matrix whose columns consist of $K(i)$ copies of the $i$th column of $A$, and $A[K']$ denotes the matrix whose rows consist  of $K'(i)$ copies of the $i$th row of $A$.

For a graph $G=(V,E)$, let
$A_G=(a_{e i})_{e \in E, i \in V}$, where for $e=\{s,t\} \in E, s < t$, 
\[
a_{ei} = \begin{cases} 1, &\text{ if $i=s$}, \cr
-1, &\text{ if $i=t$}, \cr
0, &\text{ otherwise.}
\end{cases}
\]
and let 
$B_G=(b_{ei})_{e \in E, i \in V}$, where   
\[
b_{ei} = \begin{cases} 1, &\text{ if $i$ is incident to $e$}, \cr
0, &\text{ otherwise.}
\end{cases}
\]
It is known \cite{alontarsi1989,Cao,WZ11,WZ2017} and easy to verify that  for $K   \in \mathbb{N}_{m}^n$,
\begin{eqnarray} 
&&{\rm coe}(x^K, F_A) = \frac{1}{K!} \per(A(K)), \label{eqn1}
\end{eqnarray}
and for $K \in \mathbb{N}_{m}^m$,
\begin{eqnarray} 
&& C_G = A_G B_G^T, \text{ and  } C_G(K) = A_GB_G[K]^T, \label{eqnn1}
\end{eqnarray}
and hence
\begin{eqnarray} 
\label{eqn1b} {\rm coe}(x^K,P_G)= \frac{1}{K!}\per(C_G(K)) =\frac{1}{K!}\per(A_G B_G[K]^T). 
\end{eqnarray}

To calculate the permanent of $A_G B_G[K]^T$, we consider more generally the permanent of $AB^*$ for two matrices $A, B \in M_{m,n}(\mathbb{C})$, where $B^*$ is the conjugate transpose of $B$ (so  $B^*=B^T$ when $B$ is a real matrix). 
 
  We can write $AB^*$ as 
  $$ (\overline{b_{11}} {\rm col}_1(A)+\ldots + \overline{b_{1n}} {\rm col}_n(A), \ldots, \overline{b_{m1}} {\rm col}_1(A)+\ldots + \overline{b_{mn}} {\rm col}_n(A)).$$

  For $\sigma \in  [n]^m$, let $M_{\sigma}$ be the matrix
  $$(\overline{b_{1\sigma(1)}} {\rm col}_{\sigma(1)}(A), \ldots, \overline{b_{m\sigma(m)}} {\rm col}_{\sigma(m)}(A)).$$
  
  As permanent is linear with respect to its columns, 
  we have 
  $$\per(AB^*) = \sum_{\sigma \in  [n]^m} \per(M_{\sigma}).$$ 
  For $K =(k_i)_{i \in [n]} \in \mathbb{N}^n_m$, let 
  $$S(K)= \{\sigma   \in  {[n]}^m: |\sigma^{-1}(i)| = k_i\}.$$
  Note that for $\sigma \in S(K)$, 
  $$\per(M_{\sigma}) = \left(\prod_{j=1}^m \overline{b_{j\sigma(j)}} \right) \per(A(K)).$$
  As $\sum_{\sigma \in S(K)} \prod_{j=1}^m \overline{b_{j\sigma(j)}}=\overline{{\rm coe}(x^K, F_B)} $, we have
  $$\sum_{\sigma \in S(K)} \per(M_{\sigma}) = \left(\sum_{\sigma \in S(K)} \prod_{j=1}^m \overline{b_{j\sigma(j)}} \right)\per(A(K)) =  \overline{{\rm coe}(x^K, F_B)} \per(A(K)).$$
  As $\per(A(K)) = K! {\rm coe}(x^K, F_A)$, we have  
  \begin{eqnarray*}
  	\per(AB^*) 
  	&=& \sum_{K \in \mathbb{N}_m^n} \overline{{\rm coe}(x^K, F_B)} \per(A(K)) = \sum_{K \in \mathbb{N}_m^n} K! \overline{{\rm coe}(x^K, F_B)} {\rm coe}(x^K, F_A).
  	% \\ &=& \sum_{P \in \mathbb{N}^n_m} \frac{1}{P!} \per(A(P))\per(B(P)).
  \end{eqnarray*} 
  
  Note that $\mathbb{C}[x_1, \ldots, x_n]_m$ is a complex vector space with basis   $\{x^K: K \in \mathbb{N}^n_m \}$.
   For $f(x_1, \ldots, x_n) \in \mathbb{C}[x_1, \ldots, x_n]_m$, $$f(x_1, \ldots,x_n) = \sum_{K \in \mathbb{N}^n_m} {\rm coe}(x^K, f) x^K.$$ So $\{{\rm coe}(x^K, f): K \in \mathbb{N}^n_m\}$ are  the coordinates of $f$ with respect to this basis. 
  We define an inner product in this vector space   as 
  $$\langle f, g\rangle = \sum_{K \in \mathbb{N}^n_m} K! {\rm coe}(x^K,f)\overline{ {\rm coe}(x^K,g)}.$$
  Thus $$\per(AB^*) = \langle F_A, F_B \rangle.$$
  Consequently, 
  
  \begin{eqnarray} 
  \label{eqn1b} {\rm coe}(x^K,P_G)= \frac{1}{K!}\per(C_G(K)) =\frac{1}{K!}\per(A_G B_G[K]^T) =\frac{1}{K!} \langle F_{A_G}, F_{B_G[K]} \rangle. 
  \end{eqnarray}
  
 It follows from the definition that 
 $$F_{A_G}=\prod_{e=\{i,j\}\in E, i < j} (x_i-x_j), \text{ and }   F_{B_G[K]} = \prod_{e=\{i,j\} \in E, i < j} (x_i+x_j)^{K(e)}.$$
 For simplicity,  for  a finite subset $E$ of ${\mathbb{N} \choose 2}$, $K   \in \mathbb{N}^E$, let 
  $$Q_E=\prod_{e=\{i,j\}\in E, i < j} (x_i-x_j), \text{ and }   H_E^K = \prod_{e=\{i,j\} \in E, i < j} (x_i+x_j)^{K(e)}.$$
 So
  $F_{A_G} = Q_E$ and $F_{B_G[K]} = H_E^K$, where $E$ is the edge set of $G$.

  \begin{definition}
  	\label{def-Etheta}	
  	For $K  \in \mathbb{N}^E$, let $W_{E,m}^K$ be the complex linear space spanned by 
  	$$\{H_E^{K'}: K' \le K, K' \in \mathbb{N}_m^E\}.$$
  \end{definition}
  
  It is obvious that there exists $K' \in \mathbb{N}_{|E|}^E$ such that $K' \le K$ and $\langle Q_E, H_E^{K'} \rangle \ne 0$ if and only if there exists $F \in W_{E,|E|}^K$ such that $\langle Q_E, F \rangle \ne 0$.
  
  We shall  frequently use the following easy observation.
  
  \begin{observation}
  	\label{ob1}
  	If $F_i \in W_{E, m_i}^{K_i}$ for $i=1,2$, then 
  	$$F_1F_2 \in W_{E,m_1}^{K_1}W_{E, m_2}^{K_2}=W_{E, m_1+m_2}^{K_1+K_2}.$$
  \end{observation}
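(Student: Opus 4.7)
The plan is to reduce to spanning elements and then apply the multiplicative identity for $H_E^{\cdot}$ in its exponent vector. By the definition of $W_{E,m_i}^{K_i}$, each $F_i$ can be written as a finite $\mathbb{C}$-linear combination of generators $H_E^{K'_i}$ with $K'_i \in \mathbb{N}^E_{m_i}$ and $K'_i \le K_i$. Expanding the product $F_1F_2$ bilinearly and using the fact that $W_{E,m_1+m_2}^{K_1+K_2}$ is a $\mathbb{C}$-linear subspace of the polynomial ring, it suffices to verify the claim for pure products of generators, i.e., to show that $H_E^{K'_1}\cdot H_E^{K'_2}$ lies in $W_{E,m_1+m_2}^{K_1+K_2}$ whenever $K'_i \le K_i$ and $|K'_i|=m_i$.

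The key computation is the exponent-additivity identity
$$H_E^{K'_1}\cdot H_E^{K'_2} \;=\; \prod_{e=\{i,j\},\, i<j}(x_i+x_j)^{K'_1(e)}\cdot \prod_{e=\{i,j\},\, i<j}(x_i+x_j)^{K'_2(e)} \;=\; H_E^{K'_1+K'_2},$$
which follows directly from the definition $H_E^K = \prod_{e=\{i,j\}, i<j}(x_i+x_j)^{K(e)}$ together with the usual exponent law applied factor by factor.

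Finally, I would check that the new exponent vector satisfies the two defining constraints of $W_{E,m_1+m_2}^{K_1+K_2}$: componentwise, $K'_i \le K_i$ gives $K'_1+K'_2 \le K_1+K_2$, and additivity of total mass gives $|K'_1+K'_2| = m_1+m_2$, so $K'_1+K'_2 \in \mathbb{N}^E_{m_1+m_2}$. Hence $H_E^{K'_1+K'_2}$ is itself one of the spanning generators of $W_{E,m_1+m_2}^{K_1+K_2}$, and bilinear extension places $F_1F_2$ in this space, which also identifies it with the product space $W_{E,m_1}^{K_1}W_{E,m_2}^{K_2}$ in the notation of the paper. There is no real obstacle here: the whole observation is a short unravelling of definitions, with the only substantive ingredient being the exponent-additivity of $H_E^{\cdot}$ combined with the fact that both the partial order $\le$ and the total-mass functional $|\cdot|$ are additive under summing exponent vectors.
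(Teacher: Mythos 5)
Your proof is correct and is exactly the intended unravelling of definitions; the paper states this as an easy observation without proof, and the exponent-additivity $H_E^{K_1'}H_E^{K_2'}=H_E^{K_1'+K_2'}$ together with bilinearity and the additivity of $\le$ and of total degree is all there is to it. One small caveat: what you (and the paper's later applications) actually need and what you actually establish is the inclusion $W_{E,m_1}^{K_1}W_{E,m_2}^{K_2}\subseteq W_{E,m_1+m_2}^{K_1+K_2}$; the reverse inclusion in the displayed ``equality'' is not proved by your argument and can in fact fail (e.g.\ two disjoint edges with $K_1=(2,0)$, $K_2=(0,2)$, $m_1=m_2=1$: the product space is spanned by $H_E^{(1,1)}$ alone, while $W_{E,2}^{(2,2)}$ also contains $H_E^{(2,0)}$ and $H_E^{(0,2)}$), so you are right not to dwell on it.
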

  
  %The following lemma was proved in \cite{Cao}.
  Summing up the discussions above, we have the following lemma.
  
  \begin{lemma}
  	Assume $G$ is a nice graph and $K  \in \mathbb{N}^E$. The following are equivalent:
  	\begin{enumerate}
  		\item $K$ is sufficient for $G$.
  		\item $\langle  H_E^{K'}, Q_E \rangle \ne 0$ for some $K' \le K$.
  		\item $\langle F, Q_E\rangle \ne 0$ for some $F \in W_{E, |E|}^K$.
  		\item $\per(C_G(K')) \ne 0$ for some $K' \le K$, $K' \in \mathbb{N}_{|E|}^{E}$.
  	\end{enumerate}
  \end{lemma}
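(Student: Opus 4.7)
The plan is to derive all four equivalences by unpacking definitions and invoking the identity (\ref{eqn1b}) together with linearity of the inner product in its first slot. No new computation is required; the lemma simply repackages already-proved identities into a form convenient for later use.

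First I would handle $(1) \Leftrightarrow (4)$, which is immediate from Definition \ref{def-suff} and the identity ${\rm coe}(x^{K'}, P_G) = \frac{1}{K'!}\per(C_G(K'))$ supplied by (\ref{eqn1b}): since $K'! > 0$, the coefficient and the permanent vanish simultaneously, and quantifying over $K' \le K$ with $K' \in \mathbb{N}_{|E|}^E$ on both sides gives the equivalence.

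Next I would establish $(4) \Leftrightarrow (2)$ from the second half of (\ref{eqn1b}), which identifies $\per(C_G(K'))$ with $\langle F_{A_G}, F_{B_G[K']}\rangle = \langle Q_E, H_E^{K'}\rangle$. Because $A_G$ and $B_G$ are real matrices, $Q_E$ and $H_E^{K'}$ have real coefficients; then the defining formula of the inner product shows $\langle Q_E, H_E^{K'}\rangle = \overline{\langle H_E^{K'}, Q_E\rangle}$ is real and nonzero iff $\langle H_E^{K'}, Q_E\rangle$ is. Finally, $(2) \Leftrightarrow (3)$ follows from linearity of $\langle \cdot, Q_E\rangle$ in its first slot: the forward direction takes $F = H_E^{K'}$, which lies in $W_{E,|E|}^K$ by Definition \ref{def-Etheta}, while the converse writes any $F \in W_{E,|E|}^K$ as a finite linear combination $F = \sum_{K' \le K,\, K' \in \mathbb{N}_{|E|}^E} c_{K'} H_E^{K'}$, so that $\langle F, Q_E\rangle = \sum c_{K'}\,\langle H_E^{K'}, Q_E\rangle$ can be nonzero only when at least one summand is nonzero.

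There is no substantive obstacle; the only subtle point is that the inner product is defined with a conjugation, so one must verify that $\langle H_E^{K'}, Q_E\rangle$ and $\langle Q_E, H_E^{K'}\rangle$ vanish together. This is handled by the real-coefficient observation above, and everything else is bookkeeping driven by (\ref{eqn1b}) and the spanning set defining $W_{E,|E|}^K$.
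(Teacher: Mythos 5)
Your proposal is correct and follows the same route as the paper, which states the lemma as a summary of the discussion preceding it (namely Definition \ref{def-suff}, identity (\ref{eqn1b}), the identifications $F_{A_G}=Q_E$, $F_{B_G[K]}=H_E^K$, and Definition \ref{def-Etheta} of $W_{E,|E|}^K$ as the span of the $H_E^{K'}$). The one minor over-elaboration: you don't need the real-coefficient observation for $(4)\Leftrightarrow(2)$, since conjugate symmetry of the inner product already gives $\langle Q_E, H_E^{K'}\rangle = \overline{\langle H_E^{K'}, Q_E\rangle}$, hence one vanishes iff the other does regardless of whether either is real.
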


\section{A key lemma}

\begin{definition}
	Assume $G=(V,E)$ is a graph   and $J$ is a subset of $V$.
	We denote by 
	\begin{enumerate}
		\item $E_{J,1}$  the set of non-isolated edges in $G-J$.
		\item $E_{J,2}$ the set of isolated edges in $G-J$.
		\item $E_{J,3}$ the set of edges with exactly  one end vertex in $J$.
		\item $E_{J,4}$ the set of edges with both end vertices in $J$.
	\end{enumerate} 
	For $i=1,2$, let $G_{J,i}$ be the subgraph with edge set $E_{J,i}$ and vertex set $V_{J,i}=V(E_{J,i}) = \cup_{e \in E_{J,i}} e$ (where each edge is a set of two vertices), $G_{J,4}$ has vertex set $J$, $G_{J,3}$ has vertex set $V(G)$ ($G_{J,3}$ and $G_{J,4}$ may contain isolated vertices).
\end{definition}
Note that $G_{J,1},G_{J,2}, G_{J,4}$ are pairwise vertex disjoint, and $E$ is the disjoint union of $E_{J1},E_{J,2},E_{J,3}, E_{J,4}$.

\begin{figure}[!htb]
	\centering
	\includegraphics[scale=0.45]{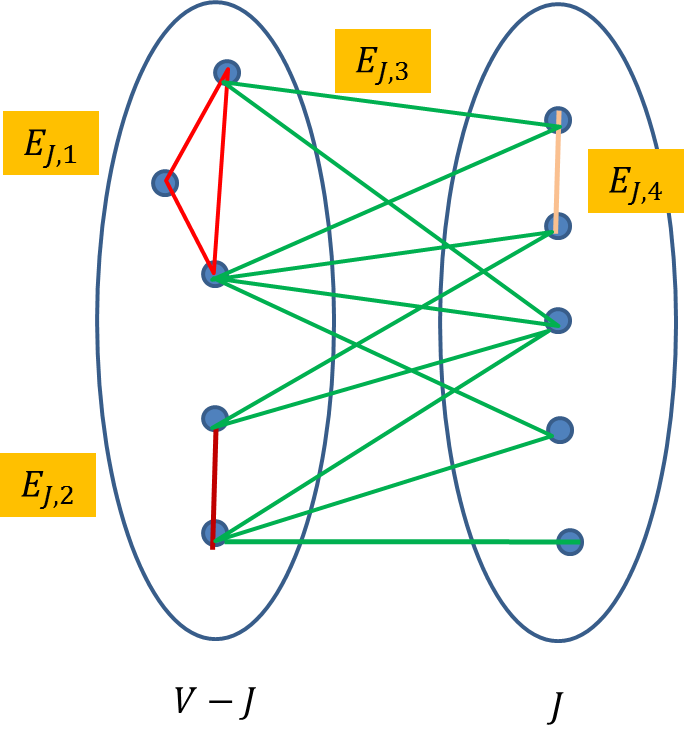}
	\caption{An example of edge sets $E_{J,1},E_{J,2},E_{J,3},E_{J,4}$.   
	}\label{fig-pathfamily0}
\end{figure}
By a subgraph family of $G$, we mean a multi-set $\mathcal{F}$ of subgraphs of $G$. There can be many copies of a same subgraph in $\mathcal{F}$.
If  $\mathcal{F}$ is a subgraph family of $G$ and $k$ is a positive integer, then $k \mathcal{F}$ is the subgraph family of $G$ in which each subgraph in $\mathcal{F}$ occurs $k$ times. If $H$ is a subgraph of $G$ which has multiplicity $m$ in $\mathcal{F}$, then $H$ has multiplicity $km$ in $k \mathcal{F}$.

\begin{definition}
	\label{def-pathfamily}
	Assume $J$ is a subset of $V$.	
	\begin{enumerate}
		\item An {\em $E_{J,2}$-covering family} $\mathcal{C}_{J,2}$ is a family 
		$\mathcal{C}_{J,2} = \{C_e: e \in E_{J,2}\}$ of $|E_{J,2}|$ edges, where for each $e \in E_{J,2}$, $C_e$ is an edge in $E_{J,3}$ adjacent to $e$, i.e., sharing one end vertex with $e$. 
		\item An {\em $E_{J,3}$-covering family} $\mathcal{C}_{J,3}$   is a family  $\mathcal{C}_{J,3}=\{P_1,P_2, \ldots, P_k\}$ of paths whose end vertices are distinct vertices in  $J$, such that the following hold:
		\begin{itemize}
			\item[(i)]  For each pair $i,j$  of distinct vertices in $J$,  there is an even number   of even length paths in $\mathcal{C}_{J,3}$ connecting $i$ and $j$, and an even number   of odd length paths in $\mathcal{C}_{J,3}$ connecting $i$ and $j$.
			\item[(ii)] For each $j \in J$, $$d_{\mathcal{C}_{J,3}}(j) \ge 2 d_{G_{J,3}}(j),$$
			where $d_{\mathcal{C}_{J,3}}(j)$ is the number of paths in $ \mathcal{C}_{J,3}$ with $j$ as an end vertex. 
		\end{itemize} 
		\item 	An {\em $E_{J,4}$-covering family}  $\mathcal{C}_{J,4}$  is a family  $\mathcal{C}_{J,4}=\{C_e: e \in E_{J,4}\}$ of $|E_{J,4}|$ closed walks, 
		where for each $e \in E_{J,4}$, $C_e$ is a   closed walk  of odd length containing $e$.  
		\item A {\em $J$-covering family} $\mathcal{C}$   is the union 
		$$\mathcal{C}= \mathcal{C}_{J,2} \cup \mathcal{C}_{J,3}  \cup \mathcal{C}_{J,4}$$
		of an $E_{J,2}$-covering family, an $E_{J,3}$-covering family and an $E_{J,4}$-covering family. 
	\end{enumerate}
\end{definition}

In the applications below, paths in $\mathcal{C}_{J,3}$ consist of edges from $E_{J,3} \cup E_{J,2}$ and have lengths 2 or 3, and closed walks in $\mathcal{C}_{J,4}$ are triangles consisting of two edges from $E_{J,3}$ and one edge from $ E_{J,4}$. 
%However, these are not required by the definition.

\begin{definition}
	For a subgraph $H$ of $G$, let $K_H \in \mathbb{N}^E$ be the characteristic function of $E(H)$, i.e., 
	\[
	K_H(e)=\begin{cases} 1, &\text{ if $e \in E(H)$} \cr
	0, &\text{ if $e \in E(G)-E(H)$.} 
	\end{cases}
	\]
	For a family $\mathcal{H}$ of subgraphs of $G$, 
	$$K_{\mathcal{H}} = \sum_{H \in \mathcal{H}}K_H.$$
\end{definition}

\begin{lemma}[Key Lemma]
	\label{lemma-key}
	Assume $G=(V,E)$ is a   graph, and $J$ is a  subset of $V$,  $\mathcal{C}$ is a $J$-covering family. If  $K \in \mathbb{N}^E$   is sufficient for $G_{J,1}$, then   $  K + K_{\mathcal{C}}$   is sufficient for $G$.    
\end{lemma}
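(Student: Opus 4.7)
By the equivalence in the preceding lemma, it suffices to exhibit $F \in W_{E,|E|}^{K+K_{\mathcal{C}}}$ with $\langle F, Q_E \rangle \ne 0$. My plan is to build $F$ multiplicatively as $F = F_1 \cdot F_{\mathcal{C}}$, where $F_1 = H_{E_{J,1}}^{K'_1}$ is the witness supplied by the sufficiency of $K$ for $G_{J,1}$ (so $K'_1 \le K|_{E_{J,1}}$, $K'_1 \in \mathbb{N}^{E_{J,1}}_{|E_{J,1}|}$, and $\langle F_1, Q_{E_{J,1}} \rangle \ne 0$), and $F_{\mathcal{C}} = H_E^{K_{\mathcal{C}}} = \prod_{H \in \mathcal{C}}\prod_{\{u,v\} \in E(H)}(x_u+x_v)$. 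By Observation~\ref{ob1}, $F \in W_{E,m}^{K'_1 + K_{\mathcal{C}}}$ for an appropriate degree $m$; after a bookkeeping step (either padding via the slack in $K$ or restricting to a suitable submultiset of $\mathcal{C}$) to ensure the total degree equals $|E|$, $F$ will lie in $W_{E,|E|}^{K+K_{\mathcal{C}}}$.

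To verify $\langle F, Q_E \rangle \ne 0$, I would use the adjoint identity $\langle fg, h\rangle = \langle f, g(\partial) h\rangle$, an easy consequence of $\langle x^K, x^{K'}\rangle = K!\,\delta_{K,K'}$, to rewrite
\[
\langle F, Q_E \rangle = \langle F_1, \, F_{\mathcal{C}}(\partial) Q_E \rangle.
\]
Then I would analyse the action of $F_{\mathcal{C}}(\partial)$, which is a product of $(\partial_u + \partial_v)$-type operators (one per edge in $\mathcal{C}$), on the factorisation $Q_E = Q_{E_{J,1}} Q_{E_{J,2}} Q_{E_{J,3}} Q_{E_{J,4}}$, using the two core identities $(\partial_u + \partial_v)(x_u - x_v) = 0$ and $(\partial_u + \partial_v)(x_u - x_w) = 1$ for $w \notin \{u,v\}$. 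The covering family is structured so that the $\mathcal{C}_{J,2}$-part of $F_{\mathcal{C}}(\partial)$ reduces $Q_{E_{J,2}}$ to a nonzero constant against the adjacent covering edges, the $\mathcal{C}_{J,3}$-part reduces $Q_{E_{J,3}}$ using the parity conditions (i)--(ii) on paths, and the $\mathcal{C}_{J,4}$-part reduces $Q_{E_{J,4}}$ using the odd-length closed walks. What survives is a nonzero scalar multiple of $\langle F_1, Q_{E_{J,1}}\rangle$, which is nonzero by the choice of $F_1$.

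The main obstacle is that the three reductions must decouple cleanly despite shared vertices: $J$-vertices lie in both $E_{J,3}$ and $E_{J,4}$, and $V_{J,1}$-vertices may also lie in $E_{J,3}$-edges, so that the $\mathcal{C}_{J,3}$-part of $F_{\mathcal{C}}(\partial)$ could a priori differentiate factors of $Q_{E_{J,1}}$ or $Q_{E_{J,4}}$. The parity and degree conditions on $\mathcal{C}_{J,3}$, together with the odd length of closed walks in $\mathcal{C}_{J,4}$, are designed precisely so that the potentially destructive cross-terms at shared vertices cancel pairwise, leaving a clean reduction. Making this cancellation rigorous, likely via a telescoping argument along each subgraph of $\mathcal{C}$ (and a separate verification that $Q_{E_{J,1}}$ is preserved by the entire operator), is the technical heart of the proof.
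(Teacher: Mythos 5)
Your macroscopic plan---factor $Q_E = Q_{E_{J,1}}Q_{E_{J,2}}Q_{E_{J,3}}Q_{E_{J,4}}$, pair the witness $F_1$ against $Q_{E_{J,1}}$, and build the remainder of $F$ out of the covering family---matches the paper's, and the adjoint identity $\langle fg,h\rangle=\langle f,g(\partial)h\rangle$ is indeed valid for this weighted inner product. But your candidate $F_{\mathcal C}=H_E^{K_{\mathcal C}}$ is the wrong object, and the ``bookkeeping'' you gesture at is where the real construction lives. The paper does not take a product of $(x_u+x_v)$ over all covering edges (degree $\sum_{H\in\mathcal C}|E(H)|$, far too large), nor a sub-multiset thereof; it takes, for each path $P\in\mathcal C_{J,3}$, the single degree-one factor $x_{s(P)}+(-1)^{\ell(P)-1}x_{t(P)}=\sum_l(-1)^{l-1}F_{e_l(P)}$, and for each odd walk $C_e\in\mathcal C_{J,4}$ the factor $x_{j_e}=\tfrac12\sum_l(-1)^lF_{e_l(C_e)}$. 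These telescoping combinations lie in $W_{E,m_3}^{K_{\mathcal C_{J,3}}}$ and $W_{E,m_4}^{K_{\mathcal C_{J,4}}}$ (so the degree comes out right), and they depend only on $\{x_j:j\in J\}$, which is what lets the inner product peel off $Q_{E_{J,1}}$ and $Q_{E_{J,2}}$ cleanly by variable-disjointness. The ``telescoping along each subgraph'' you anticipate is real, but it happens in the \emph{construction} of $F_3,F_4$, not in the action of $F_{\mathcal C}(\partial)$ on $Q_E$; a generic sub-product of covering edges gives neither the right degree nor a form you can analyze.

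The second and larger gap is the final non-vanishing step, which is not a combinatorial cancellation at all. After the reductions one must show $\langle\phi\, x^K,\psi\, Q_{E_{J,4}}\rangle\ne 0$ for some monomial $x^K$ of $Q_{E_{J,4}}$, where $\phi=\prod_{j<j'}(x_j-x_{j'})^{2t^+_{jj'}}(x_j+x_{j'})^{2t^-_{jj'}}$ and $\psi=\prod_{j<j'}(x_jx_{j'})^{t^+_{jj'}+t^-_{jj'}}$. This is Lemma~\ref{lem3}, and its proof uses Cao's real-analysis machinery: one passes from $\langle\cdot,\cdot\rangle$ to the unweighted inner product $(\cdot,\cdot)$ via Lemma~\ref{lem-added}, represents $(\cdot,\cdot)$ as a torus integral, and proves a Hermitian form $yAy^*$ is definite because the integrand $\prod\bigl(2-2\cos(\theta_j-\theta_{j'})\bigr)^{t^+_{jj'}}\bigl(2+2\cos(\theta_j-\theta_{j'})\bigr)^{t^-_{jj'}}|F_y|^2$ is nonnegative and not almost-everywhere zero. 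The evenness conditions (i) on $\mathcal C_{J,3}$ exist precisely to make every exponent in $\phi$ even so that this integrand is a nonnegative function---not, as you suggest, to force pairwise cancellation of cross-derivative terms at shared vertices. Without this positivity argument (or a substitute for it), the proposal does not establish $\langle F,Q_E\rangle\ne 0$.
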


We shall prove Lemma \ref{lemma-key} in Section \ref{sec-prooflemmakey}.
In the next two sections,  we use this lemma to prove Theorem \ref{thm-key}.
First we have the following easy corollary.

\begin{corollary}
	\label{cor-J}
	Assume  $G=(V,E)$ is a   graph,   $J$ is a subset of $V$. 
	If $G_{J,1}$ is algebraic $(1, b+1)$-choosable, and there exists a $J$-covering family $\mathcal{C}$   such that $K_{\mathcal{C}}(e) \le b$ for   $e \in E-E_{J,1}$ and $K_{\mathcal{C}}(e) = 0$ for   $e \in E_{J,1}$, then $G$ is algebraic $(1, b+1)$-choosable. 
\end{corollary}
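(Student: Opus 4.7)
The plan is to apply the Key Lemma directly to a witness of algebraic $(1,b+1)$-choosability for $G_{J,1}$, extended by zero to all of $E$, and then add the covering family $\mathcal{C}$ on top.

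First, since $G_{J,1}$ is algebraic $(1,b+1)$-choosable, Observation \ref{obs} produces some $K_1 \in \mathbb{N}^{E_{J,1}}$ with $K_1(e)\le b$ for each $e\in E_{J,1}$ that is sufficient for $G_{J,1}$. I would extend $K_1$ to $\widetilde{K}_1\in \mathbb{N}^E$ by declaring $\widetilde{K}_1(e)=0$ for every $e\in E-E_{J,1}$; clearly $\widetilde{K}_1$ is still sufficient for $G_{J,1}$, since any $K'_1\le K_1$ with $x^{K'_1}\in\mathrm{mon}(P_{G_{J,1}})$ gives (after zero-extension) a witness $K'\le\widetilde{K}_1$ in $\mathbb{N}^{E_{J,1}}_{|E_{J,1}|}$ of the sufficiency of $\widetilde{K}_1$ for $G_{J,1}$.

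Next, I would apply Lemma \ref{lemma-key} with this $\widetilde{K}_1$ and the given $J$-covering family $\mathcal{C}$. The conclusion is that $K := \widetilde{K}_1+K_{\mathcal{C}}$ is sufficient for the full graph $G$.

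The only thing left is the pointwise bound $K(e)\le b$ for every $e\in E$. This splits into two cases by the hypothesis on $\mathcal{C}$: for $e\in E_{J,1}$, we have $K_{\mathcal{C}}(e)=0$ and $\widetilde{K}_1(e)=K_1(e)\le b$; for $e\in E-E_{J,1}$, we have $\widetilde{K}_1(e)=0$ and $K_{\mathcal{C}}(e)\le b$. Hence $K(e)\le b$ throughout $E$, and Observation \ref{obs} then gives algebraic $(1,b+1)$-choosability of $G$. There is no real obstacle here: the work has been absorbed into the Key Lemma, so this corollary is essentially a bookkeeping step that packages the Key Lemma together with Observation \ref{obs}.
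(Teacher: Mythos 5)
Your proof is correct and follows essentially the same route as the paper: take a sufficient, pointwise-bounded $K$ for $G_{J,1}$ supported on $E_{J,1}$, apply the Key Lemma to get that $K + K_{\mathcal{C}}$ is sufficient for $G$, and observe that the two summands have disjoint support so the bound $\le b$ holds on all of $E$. The explicit zero-extension step you spell out is implicit in the paper's choice of $K \in \mathbb{N}^E$ supported on $E_{J,1}$, so there is no substantive difference.
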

\begin{proof}
	As $G_{J,1}$ is algebraic $(1, b+1)$-choosable, there exists $K  \in \mathbb{N}^E$  such that $K (e) \le b$ for $e \in E_{J,1}$ and $K (e)=0$ for $e \notin E_{J,1}$  and $K $ is sufficient for $G_{J,1}$.  Let $\mathcal{C}$ be a $J$-covering family such that  $K_{\mathcal{C}}(e) \le b$ for   $e \in E-E_{J,1}$ and $K_{\mathcal{C}}(e) = 0$ for    $e \in E_{J,1}$. By Lemma \ref{lemma-key},   $  K + K_{\mathcal{C}}$ is sufficient for $G$. As $ K(e) + K_{\mathcal{C}}(e) \le b$ for every edge $e$, we conclude that $G$ is algebraic $(1, b+1)$-choosable.
\end{proof}

Note that if $G$ has an isolated edge, then for any subset $J$ of $V$, there is no $J$-covering family. 

 \section{Every nice graph is algebraic  $(1,6)$-choosable}

This section proves  that every nice graph is algebraic $(1,6)$-choosable. The proof of this result is simpler than that of Theorem \ref{thm-key}, but contains the ideas of  how to find a good subset $J$ and construct a required $J$-covering family.  

For a graph $G=(V,E)$  and a subset $J$   of $V$,  $E_{J,i}$ and $G_{J,i}$ are as defined before.

 \begin{definition}	For $i \in V-J$, if  $N_{G_{J,3}}(i) = \{j\}$, then $i$ is called a {\em private neighbour} of $j$ (in $G_{J,3}$).
 \end{definition}

  \begin{definition}
  	We say  $J \subseteq V $ is a {\em good subset}   if $J \ne \emptyset$ and the following hold:
  	\begin{itemize}
  		\item[J1] $G_{J,4}$ has maximum degree at most $1$.
  		\item[J2] $G_{J,3}$ has no isolated edges.
  		\item[J3] Each vertex $j \in J$ has at most one private neighbour.
  		\item[J4] For each edge $e=\{j,j'\} \in E_{J,4}$, there is a vertex $i_e \in V-J$ with $ \{j,j'\} \subseteq  N_{G_{J,3}}(i_e)$ and none of $j,j'$ has a private neighbour. Moreover, $i_e \ne i_{e'}$ for distinct edges $e,e' \in E_{J,4}$.
  		\item[J5] For any edge $e  \in E_{J,2}$, each end vertex $i$ of $e$ has at least one neighbour in $J$. 
  	\end{itemize} 
  \end{definition}

  \begin{lemma}
  	\label{lem-goodset}
  A nice graph	$G$ has a good subset.
  \end{lemma}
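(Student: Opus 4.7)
Since the five conditions J1--J5 are local to connected components, it suffices to construct a good subset of each connected component of $G$ and take the union; because $G$ is nice, each connected component has at least three vertices. So fix a connected component $H$ on $n \ge 3$ vertices.

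The plan is to build $J_H \subseteq V(H)$ by a structural case analysis. Start from a spanning tree $T$ of $H$ with bipartition classes $A$ and $B$. The initial candidate $J_H$ is the class for which $G[J_H]$ has the fewest vertices of degree $\ge 2$; local swaps along non-tree edges then reduce $G[J_H]$ to a dissociation set, establishing J1. Conditions J2--J5 are then enforced by further local modifications, each moving a single vertex between $J_H$ and $V(H) - J_H$ in response to a specific violation: an isolated edge in $G_{J_H,3}$ is repaired by promoting a nearby vertex into $J_H$, a $j \in J_H$ with two private neighbours is repaired by moving one of them (which has another $H$-neighbour by niceness) into $J_H$, and a witness-less edge of $E_{J_H,4}$ is repaired by dissolving the edge via a swap. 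A suitable potential function on configurations---e.g., a weighted count of remaining violations---shows the process terminates.

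\paragraph{Main obstacle.} The central difficulty is the case of a vertex $u$ having at least two degree-$1$ neighbours $v_1, \ldots, v_k$ in $H$. Placing $u \in J_H$ makes every $v_i \notin J_H$ a private neighbour of $u$, so J3 fails when $k \ge 2$; but putting any $v_i \in J_H$ creates the edge $\{u, v_i\} \in E_{J_H, 4}$ for which no witness exists (since $v_i$ has only $u$ as a neighbour), so J4 fails. A viable arrangement is $u \notin J_H$ together with $v_1, \ldots, v_k \in J_H$: the edges $\{u, v_i\}$ share the common vertex $u$ in $G_{J_H, 3}$ and are non-isolated (J2), and since the only $V-J_H$-neighbour of each $v_i$ is $u$ with $|N(u) \cap J_H| \ge k \ge 2$, the vertex $u$ is not a private neighbour of any $v_i$, so J3 holds at each $v_i$. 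Grafting this star-handling rule onto the spanning-tree bipartition used for the rest of $H$, and verifying that the reconciling local swaps do not conflict (with J4 and J5 to be re-checked on the remaining edges), is the main technical work of the argument; niceness of $G$ is used precisely to rule out the degenerate configurations in which no legal swap exists.
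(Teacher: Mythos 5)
Your proposal is an outline, not a proof: it differs fundamentally from the paper's argument, and the key steps are left unfinished. The paper begins not with a spanning-tree bipartition but with a \emph{maximum independent set} $J$ of $G$, chosen so that $G_{J,3}$ has the fewest isolated edges. With that single choice, most of J1--J5 follow: J1 and J5 are immediate from maximality of $J$, J2 from the tiebreak, and if some $j\in J$ has two private neighbours then those private neighbours must form a clique (otherwise swapping $j$ out for two of them would yield a larger independent set). The paper then augments $J$ by a maximum independent set $S$ of the union of those cliques; the resulting $J'=J\cup S$ acquires exactly one isolated $E_{J',4}$-edge per used clique, with a witness vertex automatically supplied by any remaining clique member, and this simultaneously kills all multiple private neighbours. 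The crucial difficulty you identify --- a vertex $u$ with several degree-$1$ neighbours --- does not arise under the paper's scheme: maximality of $J$ forces $u\notin J$ and all its leaves into $J$, exactly the ``viable arrangement'' you describe, but you get it for free rather than by a special rule.

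The gaps in your version are concrete. You invoke ``local swaps along non-tree edges'' that reduce $G[J_H]$ to a dissociation set, ``further local modifications'' to enforce J2--J5, and ``a suitable potential function...shows the process terminates,'' but none of these is specified; you do not say which swap is applied in which situation, why a swap that repairs one condition cannot break another already established, or what the potential function is. You also acknowledge that verifying the local swaps do not conflict ``is the main technical work of the argument'' --- i.e., the part you have not done. Absent a precisely defined, provably terminating rewrite process, the argument does not establish the lemma. If you want to salvage this route, the paper's observation that the problem can be solved in essentially one step from a carefully extremal starting set (maximum independent, minimizing isolated edges in $G_{J,3}$) is the cleaner mechanism: it removes the need for an open-ended iterative repair procedure and hence the termination and non-interference issues that your sketch leaves unresolved.
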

  \begin{proof}
  	Choose a maximum independent set $J$ of $G$ so that  $G_{J,3}$ has minimum number of isolated edges. 
  	If $G_{J,3}$ does have an isolated edge $\{i,j\}$, with $i \in V-J, j \in J$, then $J'=(J-\{j\}) \cup \{i\}$ is also a maximum independent set, and $G_{J',3}$ has fewer number of isolated edges, contrary to our choice of $J$. So $G_{J,3}$ has no isolated edges.
  	
  	Let $j_1, j_2, \ldots, j_t$ be vertices in $J$ that have more   than one private neighbours.
  	For $l=1,\ldots, t$, let $S_l$ be the set of private neighbours of $j_l$. Note that 
  	$S_l$ induces a complete graph in $G$, for otherwise, say $i_l,i'_l \in S_l$ are two non-adjacent vertices in $G$, then $(J-\{j_l\}) \cup \{i_l,i'_l\}$ is a larger independent set of $G$, contrary to our choice of $J$. Also $S_1, S_2,  \ldots, S_t$ are pairwise disjoint.  Let $S$ be a maximum independent set of $G[S_1 \cup S_2 \cup \ldots \cup S_t]$. As  each $S_l$ induces a complete graph,  $|S \cap S_l | \le 1$.  Without loss of generality, we may assume that $S= \{i_1,i_2,\ldots, i_s\}$, where $s \le t$ and $i_l \in S_l$. 
  	
  	Let $J' =J \cup S$. Then for $l=1,2,\ldots, s$, $\{i_l,j_l\}$ is an isolated edge in $G_{J',4}$.   Moreover, none of $i_l,j_l$ has a private neighbour. Indeed, since every vertex in $V-J$ has at least one neighbour in $J$, we conclude that $i_l$ has no private neighbour.  On the other hand, vertices in $S_{l} - \{i_l\}$ are adjacent to both $i_l,j_l$, so none of them is a private neighbour of  $j_l$. Other neighbours of $j_l$ in $V-J$ remain to be non-private neighbour of $j_l$. 
  	
  	For $l=s+1,\ldots, t$, each vertex in $S_l$ is adjacent to some vertices in $S \subseteq J'$. So $j_l$ has no private neighbours in $G_{J',3}$.   
  	
  	For $l=1,2,\ldots, s$, let $i'_l \in S_l -\{i_l\}$. Then $\{i_l,j_l\} \subseteq N_{G_{J',3}}(i'_l)$. 
  	
  		As $J$ is a maximum independent set, for any edge $e \in E_{J',2}$ each end vertex $i$ of $e$ has at least one neighbour in $J \subseteq J'$. So $J'$  is a good subset of $V$.
  \end{proof}

\begin{lemma}
	\label{lem-5}
	Assume $G = (V,E)$ is a nice graph and $J$ is a good subset of $V$. Then there exists a $J$-covering family $\mathcal{C}$ for $J$ such that   $K_{\mathcal{C}}(e) \le 5$ for $e \in E-E_{J,1}$ and $K_{\mathcal{C}}(e) = 0$ for $e \in  E_{J,1}$. 
\end{lemma}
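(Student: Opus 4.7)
The plan is to build $\mathcal{C}=\mathcal{C}_{J,4}\cup\mathcal{C}_{J,3}\cup\mathcal{C}_{J,2}$ in stages dictated by the good-set axioms, and then repair one obstructive configuration. For $\mathcal{C}_{J,4}$, I use J4 directly: for each $e=\{j,j'\}\in E_{J,4}$, take $C_e$ to be the triangle $j{-}i_e{-}j'{-}j$; the distinctness of the $i_e$'s makes $K_{\mathcal{C}_{J,4}}$ at most $1$ on every edge, and nothing of $E_{J,1}\cup E_{J,2}$ is used.

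Next, for a base $\mathcal{C}_{J,3}$, I work at each non-private vertex $i\in V-J$, i.e., one with $d_i^J=|N_{G_{J,3}}(i)|\ge 2$. Fix a cyclic ordering $j_1,\dots,j_{d_i^J}$ of $N_{G_{J,3}}(i)$ and insert $2$ copies of each length-2 path $j_k{-}i{-}j_{k+1}$ (indices mod $d_i^J$). A straightforward count gives that every edge $\{i,j_k\}$ is used exactly $4$ times, that $d_{\mathcal{C}_{J,3}}(j)=4(d_{G_{J,3}}(j)-p_j)$ for $j\in J$ (where $p_j\in\{0,1\}$ is the number of private neighbours of $j$, by J3), and that each common neighbour of any pair $\{j,j'\}\subseteq J$ contributes $0$ or $2$ length-2 paths between them, with no odd-length paths yet. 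Conditions (i) and (ii) of Definition~\ref{def-pathfamily} therefore hold, using J2 to ensure $d_{G_{J,3}}(j)\ge 2$ whenever $p_j=1$.

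For $\mathcal{C}_{J,2}$, call $w\in V-J$ \emph{critical} when $w=i_e$ for some $e\in E_{J,4}$ and $d_w^J=2$; every $E_{J,3}$-edge incident to a critical vertex already carries the maximum value $4+1=5$ from the two previous stages. For each $e'=\{u,v\}\in E_{J,2}$, J5 supplies a $J$-neighbour at each endpoint, so I may take $C_{e'}$ to be any $E_{J,3}$-edge incident to $e'$. If at least one of $u,v$ is non-critical I place $C_{e'}$ on that side, further choosing its $J$-endpoint outside the triangle when that side is a triangle centre with $d^J\ge 3$; then the chosen edge carries at most $4+0+1=5$. The expected main obstacle is a \emph{doubly critical} $E_{J,2}$-edge $e'=\{u,v\}$ with $u=i_e$, $e=\{j_1,j_2\}$ and $v=i_{e''}$, $e''=\{j_3,j_4\}$ (the four $J$-vertices are distinct by J1). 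Here every $E_{J,3}$-edge adjacent to $e'$ is already preloaded to $5$, so a local repair of $\mathcal{C}_{J,3}$ is needed.

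The repair for each doubly critical $\{u,v\}$ deletes from $\mathcal{C}_{J,3}$ the cyclic contributions at $u$ and $v$ (the $4$ copies of $j_1{-}u{-}j_2$ and the $4$ copies of $j_3{-}v{-}j_4$) and inserts $2$ copies each of the length-3 paths $j_1{-}u{-}v{-}j_3$ and $j_2{-}u{-}v{-}j_4$. Condition (i) survives because every edit changes each pair's path count by an even amount, and condition (ii) survives because each $j_\ell$ loses exactly $2$ from its $\mathcal{C}_{J,3}$-degree and J4 guarantees $p_{j_\ell}=0$, so $d_{\mathcal{C}_{J,3}}(j_\ell)\ge 4d_{G_{J,3}}(j_\ell)-2\ge 2d_{G_{J,3}}(j_\ell)$. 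Distinct repairs do not interact because each critical vertex lies in at most one $E_{J,2}$-edge and each $j\in J$ lies in at most one $E_{J,4}$-edge (J1). After the repairs, $\{u,j_\ell\}$ carries only $2+1+1=4$, the $E_{J,2}$-edge $\{u,v\}$ carries $4$, and a short case check on the remaining edges confirms $K_{\mathcal{C}}(e)\le 5$ on $E-E_{J,1}$; since none of the walks used touches $E_{J,1}$, one also gets $K_{\mathcal{C}}(e)=0$ on $E_{J,1}$, as required.
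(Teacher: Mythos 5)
Your proof is correct, but it takes a genuinely different route from the paper's. The paper's proof anticipates the triangle-overload problem at the construction stage: for each triangle centre $i_e$ (with $e=\{j,j'\}\in E_{J,4}$), it uses an \emph{open} path family $\mathcal{P}_{i_e}=\{(j_{i_e,l},i_e,j_{i_e,l+1}): l=1,\dots,t_{i_e}-1\}$ with $j,j'$ placed at the two ends of the ordering, rather than the cyclic family you use. This makes the two triangle edges $\{i_e,j\},\{i_e,j'\}$ carry only $2$ in $\mathcal{C}_{J,3}$ (instead of your $4$), so triangle edges end up at $\le 2+1+1=4$ and all other $E_{J,3}$-edges at $\le 4+0+1=5$, with no repair needed and no ``doubly critical'' case to worry about; condition (ii) at the two end vertices $j,j'$ is rescued by J4 exactly as you use J4 in your repair. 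Your approach --- cyclic families everywhere, then a local surgery that trades the four copies of $j_1{-}u{-}j_2$ and of $j_3{-}v{-}j_4$ for two copies each of the length-$3$ paths $j_1{-}u{-}v{-}j_3$ and $j_2{-}u{-}v{-}j_4$ --- is also valid (the parity bookkeeping for condition (i), the degree drop of $2$ at each of $j_1,\dots,j_4$ for condition (ii), and the disjointness of distinct repairs via J1 and the uniqueness of each $i_e$ all check out). Interestingly, your length-$3$-path substitution is essentially the device the paper introduces only later, in Lemma~\ref{lem-key}, to push the bound down to $4$; you are deploying it already for the bound $5$, where the paper gets away with the simpler open-vs-cyclic asymmetry. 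Two small imprecisions worth noting: your ``contributes $0$ or $2$'' claim for common neighbours of $j,j'$ is off when $d_i^J=2$ (it is then $4$, though still even, so condition (i) is unaffected); and your definition of ``critical'' as requiring $d_w^J=2$ obscures that the triangle edges at a centre with $d^J\ge 3$ also sit at $4+1=5$ before $\mathcal{C}_{J,2}$ --- you do handle this correctly by steering $C_{e'}$ to a non-triangle edge, but the threshold is ``within one of the budget,'' not ``at the maximum.''
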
 
\begin{proof}
	Let $J$ be a good subset of $V$.
	Let $$I = \{i \in V-J: d_{G_{J,3}}(i) \ge 2\}.$$

	For each $i \in I$, we   construct a path family $\mathcal{P}_i$ as follows:   
	 
	 Assume $$N_{G_{J,3}}(i) = \{j_{i,1},j_{i,2}, \ldots, j_{i,t_i}\}.$$    
	 
	 If $i=i_e$ for an edge $e=\{j,j'\} \in E_{J,4}$,   then    we order vertices in $N_{G_{J,3}}(i)$ so that 
	 $j_{i, 1}= j$ and $j_{i,t_{i}} = j'$. Let  $\mathcal{P}_i$   be the path family consisting of the paths $P_{i,l}=(j_{i,l}, i, j_{i,l+1})$.
for $l=1,2,\ldots, t_{i-1}$.

Otherwise let  $\mathcal{P}_i$ be the path family consisting of the paths $P_{i,l}=(j_{i,l}, i, j_{i,l+1})$.
	  for $l=1,2,\ldots, t_i$, where we let $t_i+1 =1$. (See Figure \ref{fig-pathfamily2022}.)
	  
	  	\begin{figure}[!htb]
	  		\centering
	  		\includegraphics[scale=0.65]{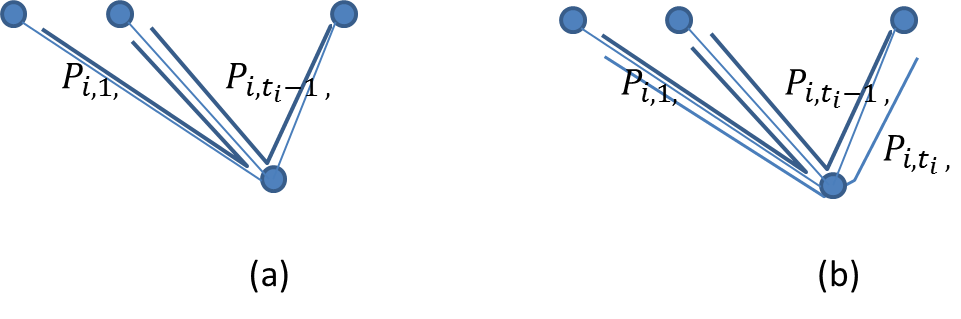}
	  		\caption{The path family $\mathcal{P}_{i}$, (a) for $i=i_e$ for an edge $e \in E_{J,4}$, (b) for other $i \in I$.   
	  	}\label{fig-pathfamily2022}
	  	\end{figure}
	
	Let $$\mathcal{C}'_{J,3}=\cup_{i \in I} \mathcal{P}_i \text{ and }  \mathcal{C}_{J,3} = 2 \mathcal{C}'_{J,3}.$$

	Note that if $t_i=2$ and $i \ne i_e$ for any edge $e \in E_{J,4}$,   then $\mathcal{P}_i$ consists of two copies of the path $(j_{i,1}, i, j_{i,2})$. 

Now we show that $\mathcal{C}_{J,3}$ is an $E_{J,3}$-covering   family. Since   $\mathcal{C}_{J,3} = 2 \mathcal{C}'_{J,3}$,   any $j,j' \in J$ is connected by an even number of even length paths and an even number of odd length paths (indeed, each path in $\mathcal{C}_{J,3}$ constructed above has length $2$, so the number of   odd length paths connecting $j$ and $j'$ is 0). 

	For 
$\mathcal{C}_{J,3}$ to be  an $E_{J,3}$-covering   family, we   need 
$d_{\mathcal{C}_{J,3}}(j ) \ge 2 d_{G_{J,3}}(j)$. This means that for each  contribution of 1 to 
$d_{G_{J,3}}(j)$, there should be a corresponding contribution of $2$ to $d_{\mathcal{C}_{J,3}}(j )$.

Assume $i \in I$ and  $N_{G_{J,3}}(i) = \{j_{i,1},j_{i,2}, \ldots, j_{i,t_i}\}$ ($t_i \ge 2$).
	
	If $i=i_e$ for an edge $e=\{j,j'\} \in E_{J,4}$, then none of $j_{i,1}, j_{i,t_i}$ has a private neighbour. For $l=1, t_i$, the edge $\{i, j_{i,l}\}$   contributes 1 to $d_{G_{J,3}}(j_{i,l})$, and contributes 2 to $d_{\mathcal{C}_{J,3}}(j_{i,l})$; for $2 \le l \le t_i-1$,
	each edge $\{i,j_{i,l}\}$    contributes 1 to $d_{G_{J,3}}(j_{i,l})$, and contributes 4 to $d_{\mathcal{C}_{J,3}}(j_{i,l})$, as there are 4 paths ending with edge  $\{i, j_{i,l}\}$, namely, two copies of  $P_{i,l-1}$ and $P_{i,l}$. 
	 
	If $ i \ne i_e$ for any $e \in E_{J,4}$, then  for $1 \le l \le t_i$, each edge $\{i,j_{i,l}\}$    contributes 1 to $d_{G_{J,3}}(j_{i,l})$, and contributes 4 to $d_{\mathcal{C}_{J,3}}(j_{i,l})$, for the same reason as above.

The extra contribution of 2 to $d_{\mathcal{C}_{J,3}}(j_{i,l})$ from the edge $\{i,j_{i,l}\}$ is used to compensate a possible edge $\{i',j_{i,l}\}$ for which $i'$ is a private neighbour of $j_{i,l}$. 
 Note that if $i'$ is a private neighbour of $j_{i,l}$, then this edge contributes 1 to 
$d_{G_{J,3}}(j_{i,l})$, but there is no  path in $\mathcal{C}_{J,3}$ using the edge $\{i',j_{i,l}\}$. 
So the contribution of the edge $\{i',j_{i,l}\}$ to  $d_{\mathcal{C}_{J,3}}(j_{i,l})$ is $0$.

In case  $i=i_e$ for an edge $e \in E_{J,4}$, then  none of $j_{i,1}, j_{i,t_i}$ has a private neighbour,   for $l=1,t_i$. It suffices for each edge $\{i, j_{i,l}\}$ making contribution 2 to $d_{\mathcal{C}_{J,3}}(j_{i,l} )$. 

Otherwise, 
as each vertex $j \in J$ has at most 1 private neighbour, and when $j$ has a private neighbour, then $j$ has at least one non-private neighbour (since $G_{J,3}$ has no isolated edges), we conclude that $d_{\mathcal{C}_{J,3}}(j ) \ge 2 d_{G_{J,3}}(j)$ for all $j \in J$. 
Hence $\mathcal{C}_{J,3}$ is an $E_{J,3}$-covering  family. 

For any edge $e = \{i,j\}$ with $i \in V-J$ and $j \in J$, $e$ is   contained only in paths in $\mathcal{P}_i$.  By the construction of $\mathcal{P}_i$,  there are at most 2 paths in $\mathcal{P}_i$ that contain the edge $e$, and hence 4 paths in $\mathcal{C}_{J,3}$ that contain the edge $e$.  Moreover, for   $e = \{j,j'\} \in E_{J,4}$, for  $e'_1 = \{i_e,j\}$ and $e'_2 = \{i_e, j'\}$,  there is only one path in $\mathcal{P}_{i_e}$ that contains $e'_1$ and one path in $\mathcal{P}_{i_e}$ that contains $e'_2$. Hence there are only  2 paths in $\mathcal{C}_{J,3}$ that contains each of $e'_1, e'_2$.  

Let $$E'= \{\{i_e,j\}, \{i_e,j'\}: e=\{j,j'\} \in E_{J,4}\}.$$

Therefore 
\[
K_{\mathcal{C}_{J,3}}(e) = \begin{cases} 4, &\text{ if $e \in E_{J,3} -E'$}, \cr
2, &\text{ if $e \in E'$}, \cr
0, & \text{ if $e \notin E_{J,3}$}.
\end{cases}
\]

For $e=\{i_1,i_2\} \in E_{J,2}$, let $C_e =  \{i_1,j_1\}  $  be an arbitrary edge in $ E_{J,3}$ incident to $i_1$. (Since $J$ is a good subset, $i_1$ has a neighbour in $J$, so such an edge $C_e$ exists).  
Then $\mathcal{C}_{J,2} = \{C_e: e\in E_{J,2}\}$ is an $E_{J,2}$-covering family (see Figure \ref{fig-pathfamily0}).

\begin{figure}[!htb]
		\centering
		\includegraphics[scale=0.65]{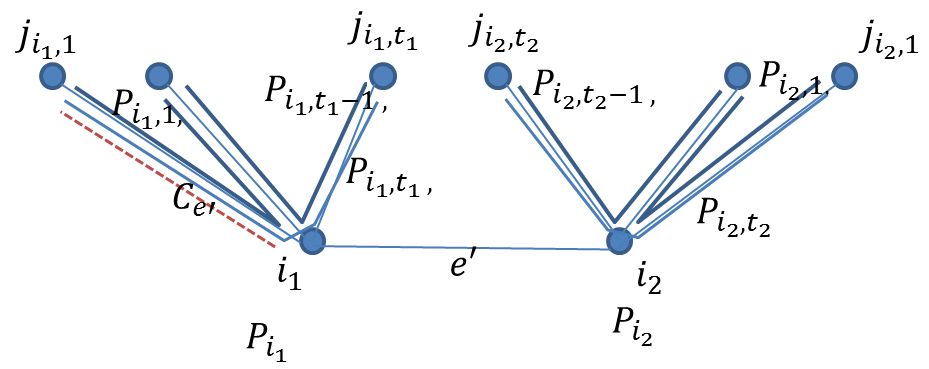}
		\caption{The path families $\mathcal{P}_{i_1}$ and $\mathcal{P}_{i_2}$, and the 
			edge $C_{e'}=\{i_1,j_{i_1,1}\}$ for the edge $e'=\{i_1,i_2\} \in E_{J,2}$. (The figure illustrates the case that $t_1=t_2=3$.)   
			  }\label{fig-pathfamily0}
	\end{figure}
For each edge $e=\{j, j'\} \in E_{J,4}$, let $C_e$ be the closed walk  $(i_e, j,j', i_e)$ (which is a triangle).  Then $\mathcal{C}_{J,4} = \{C_e: e\in E_{J,4}\}$ is an $E_{J,4}$-covering family.

If $e \in E_{J,3}$ is contained in $C_{e'} $ for some  $e'   \in E_{J,4}$,   then    $e \in E'$, and hence 
$$K_{\mathcal{C}_{J,3}}(e) = 2, \text{ and } K_{\mathcal{C}_{J,4}}(e) = 1,  \text{ and } K_{\mathcal{C}_{J,2}}(e) \le 1.$$ 
Hence $K_{\mathcal{C}}(e) \le 4$.
 
If $e \in E_{J,3}-E'$, then $$K_{\mathcal{C}_{J,3}}(e) \le  4, \text{ and } K_{\mathcal{C}_{J,4}}(e) = 0, \text{ and } K_{\mathcal{C}_{J,2}}(e) \le 1.$$ 
Hence
$$K_{\mathcal{C}}(e)   \le 5.$$

Therefore $$K_{\mathcal{C}}(e)   \le 5
 \ \forall e \in E-E_{J,1}, \text{ and } K_{\mathcal{C}}(e)  = 0 \  \forall e \in  E_{J,1}.$$
 This completes the proof of Lemma \ref{lem-5}. 
\end{proof}

The following result is  an immediate consequence of Lemma \ref{lem-5} and Lemma \ref{lemma-key}. 

\begin{theorem}
	\label{thm16}
	Every nice graph is algebraic  $(1,6)$-choosable.
\end{theorem}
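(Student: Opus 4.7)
The theorem should follow from the preceding development by a straightforward induction on $|E(G)|$, so my plan is to set this induction up and check that each ingredient is in place. For the base case $|E(G)|=0$, the polynomial $P_G$ is the empty product $1$, so $K\equiv 0$ satisfies $x^K\in{\rm mon}(P_G)$ with $K(e)<6$ vacuously, yielding algebraic $(1,6)$-choosability.

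For the inductive step, I would let $G$ be a nice graph with $E(G)\ne\emptyset$ and assume the conclusion for every nice graph with strictly fewer edges. I invoke Lemma \ref{lem-goodset} to obtain a good subset $J\subseteq V$, and Lemma \ref{lem-5} to produce a $J$-covering family $\mathcal{C}$ with $K_{\mathcal{C}}(e)\le 5$ for every $e\in E-E_{J,1}$ and $K_{\mathcal{C}}(e)=0$ for every $e\in E_{J,1}$. Corollary \ref{cor-J}, applied with $b=5$, then reduces the claim for $G$ to the claim that $G_{J,1}$ is algebraic $(1,6)$-choosable, which the inductive hypothesis supplies.

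The only points I need to check are that the inductive hypothesis actually applies to $G_{J,1}$. First, $G_{J,1}$ is nice: by definition $E_{J,1}$ contains only non-isolated edges of $G-J$, and its vertex set $V_{J,1}=V(E_{J,1})$ has no isolated vertices. Second, $|E(G_{J,1})|<|E(G)|$: the good subset $J$ produced by Lemma \ref{lem-goodset} contains a maximum independent set of $G$, which, because $G$ has at least one edge, must include some endpoint of some edge by maximality; that edge then lies in $E_{J,3}\cup E_{J,4}$, giving $E_{J,1}\subsetneq E$. I do not anticipate any real obstacle here: the combinatorial content sits entirely in Lemmas \ref{lem-goodset} and \ref{lem-5}, the algebraic content in the Key Lemma \ref{lemma-key} (packaged via Corollary \ref{cor-J}), and what remains is the routine bookkeeping of the induction.
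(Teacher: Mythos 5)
Your proposal is correct and follows essentially the same route as the paper: induction on $|E(G)|$, Lemma \ref{lem-goodset} to produce a good subset $J$, Lemma \ref{lem-5} to produce the $J$-covering family with $K_{\mathcal{C}}\le 5$ off $E_{J,1}$, and Corollary \ref{cor-J} with $b=5$ to reduce to $G_{J,1}$. The only cosmetic difference is that you explicitly verify that $G_{J,1}$ is nice and that $|E(G_{J,1})|<|E(G)|$, details the paper leaves implicit after reducing to the connected case.
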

\begin{proof}
	The proof is by induction on the number of edges of $G$. 
	We may assume $G$ is connected. If $G$ has no edges,
	 then the theorem holds trivially. 
	 Assume $G=(V,E)$ has $m$ edges and the theorem holds 
	 for any graph with fewer edges. Let $J$ be a good subset of $V$. As $G_{J,4}$ has maximum degree at most 1, we know that $J \ne V(G)$, and $E_{J,4} \ne E(G)$.  
	 Let $\mathcal{C}$ be a   $J$-covering family for which
	  $K_{\mathcal{C}}(e) \le 5$ for   $e \in E-E_{J,1}$ and
	   $K_{\mathcal{C}}(e) =0$ for   $e \in E_{J,1}$. By the induction hypothsis, $G_{J,1}$ is algebraic $(1,6)$-choosable. By Corollary \ref{cor-J}, 
	 $G$ is algebraic  $(1,6)$-choosable. 
\end{proof}

\section{Every nice graph is algebraic  $(1,5)$-choosable}

In this section, we  prove the the following result, which implies Theorem \ref{thm-key}.

\begin{theorem}
	\label{thm-alg15}
	Every nice graph is algebraic $(1,5)$-choosable.
\end{theorem}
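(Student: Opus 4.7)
The strategy parallels the proof of Theorem \ref{thm16}: induct on $|E(G)|$, select a suitable subset $J \subseteq V$, construct a $J$-covering family $\mathcal{C}$ with $K_{\mathcal{C}}(e) \le 4$ on $E - E_{J,1}$ and $K_{\mathcal{C}}(e) = 0$ on $E_{J,1}$, and invoke Corollary \ref{cor-J}. The entire bottleneck is tightening the per-edge bound from $5$ (achieved in Lemma \ref{lem-5}) down to $4$. In Lemma \ref{lem-5} this bound is only attained when an edge $e \in E_{J,3}$ receives the full multiplicity $4$ from the doubled star-paths $\mathcal{C}_{J,3} = 2\mathcal{C}'_{J,3}$ and is \emph{also} selected as the witness $C_{e'}$ for some $e' \in E_{J,2}$; essentially all of the work will be to avoid that coincidence.

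First, I would strengthen the definition of a good subset, adding conditions that decouple the edges used by $\mathcal{C}_{J,2}$ from those that are already used twice by $\mathcal{C}_{J,3}$. A natural enhancement is to insist that every vertex $i$ incident to an edge of $E_{J,2}$ has at least two neighbours in $J$, and that some choice of $C_e$ can always be made at a non-private neighbour — in which case $d_{G_{J,3}}(C_e\cap J) \ge 2$ and the corresponding star path $\mathcal{P}_i$ can be constructed so that the selected edge $C_e$ lies in at most one path of $\mathcal{P}_i$ (so $K_{\mathcal{C}_{J,3}}(C_e) \le 2$ after doubling, leaving room for the $+1$ from $\mathcal{C}_{J,2}$). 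Existence of such an enhanced $J$ would be proved in the spirit of Lemma \ref{lem-goodset}: start from a maximum independent set and apply iterative local swaps, using niceness to ensure that the process terminates.

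Second, I would refine the construction of Lemma \ref{lem-5} by replacing some length-$2$ star paths by length-$3$ paths of the form $(j, i_1, i_2, j')$ where $\{i_1, i_2\} \in E_{J,2}$. Such a path carries contributions to $d_{\mathcal{C}_{J,3}}(j)$ and $d_{\mathcal{C}_{J,3}}(j')$ while simultaneously traversing an $E_{J,2}$-edge, so the uniform doubling of $\mathcal{C}'_{J,3}$ can be selectively relaxed near the vertices incident to $E_{J,2}$. These length-$3$ paths are odd-length and must be paired up between the same ordered pairs of $J$-endpoints in order to meet the parity conditions of Definition \ref{def-pathfamily}(2)(i); with the enhanced conditions on $J$, this pairing can always be performed. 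A direct edge count then yields $K_{\mathcal{C}_{J,3}}(e) + K_{\mathcal{C}_{J,4}}(e) + K_{\mathcal{C}_{J,2}}(e) \le 4$ for every $e \in E - E_{J,1}$, while $K_{\mathcal{C}}$ still vanishes on $E_{J,1}$. The induction then closes as in Theorem \ref{thm16}: the inductive hypothesis applied to $G_{J,1}$ (which has strictly fewer edges whenever the enhanced $J$ is nonempty and $E_{J,4} \ne E$) supplies a sufficient $K$ on $E_{J,1}$ with $K(e) \le 4$, Lemma \ref{lemma-key} produces a sufficient multi-index $K + K_{\mathcal{C}}$ for $G$ with all coordinates at most $4$, and Observation \ref{obs} delivers algebraic $(1,5)$-choosability.

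The hardest step will be the simultaneous enforcement of the three constraints on $\mathcal{C}_{J,3}$: the two parity conditions in Definition \ref{def-pathfamily}(2)(i), the degree lower bound $d_{\mathcal{C}_{J,3}}(j) \ge 2\,d_{G_{J,3}}(j)$, and the new per-edge cap of $4$. The introduction of length-$3$ paths creates odd-length paths whose counts must be matched pairwise at each pair of $J$-endpoints, and every vertex $j \in J$ with a private neighbour forces an extra local doubling that eats into the budget. Keeping the budget at $4$ — rather than $5$ — on every edge is exactly what forces the strengthened definition of a good subset, and pinning down precisely which structural conditions suffice is where the combinatorial accounting will be tightest.
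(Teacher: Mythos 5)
Your high-level diagnosis is correct --- the $+5$ bound in Lemma~\ref{lem-5} is only reached on edges that serve simultaneously as some $C_{e'}$ for $e'\in E_{J,2}$ and as a fully-doubled star edge, and the remedy is to ``open'' the cyclic star at those edges, using a length-$3$ path through the $E_{J,2}$-edge (this is precisely the paper's Case~2 construction of $\mathcal{P}_{i_1,i_2}$). But two of your proposed mechanisms would not go through. First, the paper does \emph{not} relax the uniform doubling $\mathcal{C}_{J,3}=2\mathcal{C}'_{J,3}$: it is precisely this doubling that makes the parity constraint of Definition~\ref{def-pathfamily}(2)(i) automatic once odd-length paths are introduced. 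Your plan to ``selectively relax the doubling near vertices incident to $E_{J,2}$'' would destroy that automatic parity, and you give no mechanism for restoring it; worse, relaxing the doubling directly threatens the degree lower bound $d_{\mathcal{C}_{J,3}}(j)\ge 2\,d_{G_{J,3}}(j)$ at exactly the vertices $j$ with a private neighbour. Second, you propose to strengthen the definition of a good subset (e.g.\ requiring every endpoint $i$ of an $E_{J,2}$-edge to have at least two neighbours in $J$), but it is not clear such a subset always exists --- the paper does not strengthen the definition at all, and explicitly handles the case $d_{G_{J,3}}(i_1)=1$ separately at the end of Lemma~\ref{lem-key}.

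The genuinely missing ingredient is the ``good assignment'' $\tau:J\to V-J$ (Definitions~\ref{def-j2cover}, \ref{def-goodassign}) and its existence proof (Lemma~\ref{lem-goodassign}). The problem the paper must solve is: for each $E_{J,2}$-endpoint $i_1$, one must choose a $J$-neighbour $j_{i_1,1}$ at which the star $\mathcal{P}_{i_1}$ is opened (so that $K_{\mathcal{C}_{J,3}}(\{i_1,j_{i_1,1}\})$ drops to $2$ and can absorb the $+1$ from $\mathcal{C}_{J,2}$), while ensuring that $j_{i_1,1}$ still meets its degree demand $d_{\mathcal{C}_{J,3}}(j_{i_1,1})\ge 2 d_{G_{J,3}}(j_{i_1,1})$ from its \emph{other} non-private neighbours. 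These demands can conflict across several $E_{J,2}$-endpoints sharing $J$-neighbours, and resolving the conflict globally is done by orienting an auxiliary labelled multigraph $H$ on $I=\{i\in V-J: d_{G_{J,3}}(i)\ge 2\}$ (Lemma~\ref{lem-goodassign}). That orientation argument in turn relies on structural consequences of the minimum-counterexample hypothesis --- Lemma~\ref{lem-minimum} (a vertex with a special neighbour has $d_{G_{J,3}}\le 2$), Lemma~\ref{lem-folklore} (no pendant adjacent to a degree-$2$ vertex, no two adjacent degree-$2$ vertices), and Lemma~\ref{lem-leaf} (each $E_{J,2}$-endpoint with $d_{G_{J,3}}\ge 2$ has two non-special $J$-neighbours) --- rather than on any strengthening of the definition of a good subset. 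Your proposal contains none of this machinery, and without it the ``direct edge count'' at the end does not close.
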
 
Assume  Theorem \ref{thm-alg15} is not true, and   
\begin{itemize}
	\item  $G=(V,E)$ is a   counterexample with minimum number of vertices. 
	\item $J$ is a good subset of $V$ of minimum size.
\end{itemize}

Following the proof of Theorem \ref{thm16}, it suffices to  construct a $J$-covering family $\mathcal{C}$  so that 
$K_{\mathcal{C}}(e) \le 4$ for   $e\in E-E_{J,1}$  and $K_{\mathcal{C}}(e) =0$ for $e \in E_{J,1}$.

For the $J$-covering family $\mathcal{C}$ constructed in the proof of Lemma \ref{lem-5}, 
 it is easy to verify that  the following   conclusion  holds:
\begin{enumerate}
	\item If $e \in E_{J,2} \cup E_{J,1}$, then  $K_{\mathcal{C}}(e)   = 0.$ 
	\item If $e \in E_{J,4}$, then  $K_{\mathcal{C}}(e)   = 1.$ 
	\item If $e \in E_{J,3}$ is an edge of $C_{e'}$ for some $e' \in E_{J,4}$, then 
	$K_{\mathcal{C}}(e)   \le 4.$ 
	\item If $e \ne C_{e'}$ for any $e' \in E_{J,2}$,  then 
	$K_{\mathcal{C}}(e)   \le 4.$ 
	\item If $e = C_{e'}$ for some edge $e'\in E_{J,2}$ and $e \notin C_{e'}$ for any $e' \in E_{J,4}$,   then
	$K_{\mathcal{C}}(e)   \le 5.$ 
\end{enumerate}

  We will   modify the   construction of the $E_{J,3}$-covering family $\mathcal{C}_{J,3}$ and  choose an $E_{J,2}$-covering family more carefully so that
for the resulting $J$-covering family $\mathcal{C}$, 
 $K_{\mathcal{C}}(e) \le 4$, even if $e = C_{e'}$ for some $e' \in E_{J,2}$.

Assume $e' = \{i_1, i_2\} \in E_{J,2}$, and $e=C_{e'} = \{i_1, j\}$.
In the proof of Theorem \ref{thm16},  the edge $\{i_1,j\} \in E_{J,3}$ may contribute 4 to 
$d_{\mathcal{C}_{J,3}}(j)$. As we observed in the proof of Lemma \ref{lem-5},   only a contribution of 2 is needed to compensate the contribution of $\{i_1,j\}$ to $d_{G_{J,3}}(j)$. The extra contribution of 2 is used to compensate the contribution of a {\em possible edge}  $\{i',j\} \in E_{J,3}$ for which $i'$ is a private neighbour of $j$.  
\begin{enumerate}
	\item If $j$ does not have a private neighbour, then the extra contribution of $2$ is not needed, and possibly we can remove two paths in $\mathcal{P}_{i_1}$ which use the edge $e=\{i_1,j\}$ so that $K_{\mathcal{C}_{J,3}}(e)$ drops to $2$.
	\item If $j$ does have a private neighbour, then  an extra contribution of $2$ to $d_{\mathcal{C}_{J,3}}(j)$ is needed. However, $j$ may have  more than 1 non-private neighbours, and we only need a single contribution of 2 to compensate that private neighbour.
	We may select  only one non-private neighbour $i$ of $j$ to make an extra contribution of 2 to  $d_{\mathcal{C}_{J,3}}(j)$, and for   the other non-private neighbour $i'$ of $j$, again we can remove two paths in $\mathcal{P}_{i'}$ that use the edge $e=\{i',j\}$ so that $K_{\mathcal{C}_{J,3}}(e)$ drops to $2$.
	\item If we can reduce $K_{\mathcal{C}_{J,3}}(e)$ to $2$ for all those edges $e$ for which 
	$K_{\mathcal{C}_{J,2}}(e)=1$, then we have $K_{\mathcal{C}}(e) \le 4$ for all edges $e$.
\end{enumerate}

This is exactly what we shall do.  First we prove some results about  the structure of the graph $G_{J,3}$.

\begin{definition}
	Assume $\{i,j\} \in E_{J,3}$ with $j \in J$. If $j$ has a private neighbour and $i$ is the only non-private neighbour of $j$, then we call $j$ a {\em special neighbour} of $i$.
\end{definition}

\begin{lemma}
	\label{lem-minimum}
	If    $i \in V-J$ has a special neighbour, then    
	$d_{G_{J,3}}(i) \le 2$.  
\end{lemma}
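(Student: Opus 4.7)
The plan is to assume for contradiction that $d_{G_{J,3}}(i) \ge 3$, and then derive a smaller good subset. Writing $j$ for the special neighbour of $i$, I would show that $J' := J - \{j\}$ is itself a good subset of $V$; since $d_{G_{J,3}}(i) \ge 3$ forces $|J| \ge 3$, this would give a nonempty $J'$ with $|J'| < |J|$, contradicting the minimality of $|J|$ and proving the lemma.

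The first step is to pin down the rigid local structure around $j$. By the definition of special neighbour together with J3, $j$ has exactly one private neighbour $i^*$ and $i$ is the unique non-private neighbour of $j$, so $N_{G_{J,3}}(j) = \{i, i^*\}$. The contrapositive of J4 then forces $d_{G_{J,4}}(j) = 0$: any edge of $E_{J,4}$ containing $j$ would, by J4, force $j$ to have no private neighbour, contradicting the existence of $i^*$. Hence passing from $J$ to $J'$ reclassifies no $E_{J,4}$-edge as an $E_{J',3}$-edge, and the two $G$-edges $\{i, j\}$ and $\{i^*, j\}$ become edges of $G - J'$ meeting at the degree-$2$ vertex $j$, so both lie in $E_{J',1}$.

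Conditions J1, J4, and J5 for $J'$ should then be routine: J1 is immediate since $G_{J',4}$ is obtained from $G_{J,4}$ by deleting a vertex; J4 transfers because $E_{J',4} = E_{J,4}$ and the old witnesses $i_e$ still lie in $V - J'$ and still serve; and J5 reduces to noting that any putative isolated edge of $G - J'$ not already in $E_{J,2}$ would be incident to $i$ or $i^*$, each of which gains $j$ as a new neighbour in $G - J'$ and so has degree at least $2$.

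The main obstacle is J2 (and, by essentially the same argument, J3), where the hypothesis $d_{G_{J,3}}(i) \ge 3$ is crucially used. A would-be isolated edge $\{a, b\} \in E_{J',3}$ with $a \in V - J$ and $b \in J - \{j\}$ would force $N_G(a) \cap J \in \{\{b\}, \{b, j\}\}$ together with $N_G(b) \cap (V - J) = \{a\}$ and $\{b, j\} \notin E$. In the first alternative, $\{a, b\}$ is already isolated in $G_{J,3}$, contradicting J2 for $J$. In the second alternative, $a$ is a non-private neighbour of $j$, so by the special-neighbour hypothesis $a = i$, giving $|N_G(i) \cap J| = 2$ and contradicting our assumption. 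The same dichotomy shows that no $j' \in J'$ can acquire a new private neighbour in $G_{J',3}$, since any such new private neighbour would again be $i$ (and the deleted vertex $j$ is isolated in $G_{J',3}$ by $d_{G_{J,4}}(j) = 0$, so cannot serve either), so J3 also carries over. This yields the desired contradiction.
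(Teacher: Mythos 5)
Your proposal is correct and follows essentially the same route as the paper: assume $d_{G_{J,3}}(i)\ge 3$ and show that $J'=J-\{j\}$ is a good subset, contradicting the minimality of $J$. The paper compresses the verification of J1--J5 into a couple of sentences ("easily verified"), whereas you spell out the case analysis; in particular your observation that any new private neighbour or newly-isolated edge in $G_{J',3}$ would force the offending vertex to be $i$ and hence $d_{G_{J,3}}(i)=2$ is precisely the content the paper leaves implicit.
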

\begin{proof}
	Assume $d_{G_{J,3}}(i) \ge 3$ and $j$ is a special neighbour of $i$.   We show that  
	$J'=J-\{j\}$ is a good subset of $V$. 
	It is easy to see that $E_{J',2} \subseteq E_{J,2}$,  and if $\{i_1, i_2\} \in E_{J',2}$, then 
	each of $i_1,i_2$ has at least one neighbour in $J'$. 
	Moreover, if $\{i',j'\} \in E_{J',3}$, then 
	$i'$ is a private neighbour of $j'$ in $G_{J', 3}$ if and only if $i'$ is a private neighbour of $j'$ in $G_{J, 3}$.  So each of the condition of being a good subset is easily verified.  This contradicts the choice of $J$.  
\end{proof}

\begin{lemma}
	\label{lem-folklore}
	For the graph $G$, the following hold:
	\begin{enumerate}
		\item $G$ does not have a degree 1 vertex adjacent to a degree 2 vertex.
		\item $G$ does not contain two adjacent degree 2 vertices. 
	\end{enumerate}  
\end{lemma}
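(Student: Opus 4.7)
I will prove both parts by contradiction, lifting a sufficient monomial from a smaller subgraph to $G$ using the minimality of $G$ as a counterexample.
For part (1), let $u$ have degree $1$ with unique neighbor $v$ of degree $2$, and let $w$ be $v$'s other neighbor. The key structural observation is that in $P_G$, the factor for $\{u,v\}$ equals $-x_{vw}$ (since $x_{uv}$ cancels), and $x_{uv}$ appears only inside the factor for $\{v,w\}$, giving $P_G = \pm x_{vw}(x_{uv} - S_w) R$ where $S_w = \sum_{e \in E(w),\, e \ne \{v,w\}} x_e$ and $R$ is the product of the remaining factors; one checks directly that $R|_{x_{vw} = 0} = P_{G'}$ for $G' = G - u - v$. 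The graph $G'$ is nice unless $G = P_4$ (which occurs exactly when $d_G(w) = 2$ and $w$'s other neighbor has degree $1$); in the generic case, by minimality $G'$ is algebraic $(1,5)$-choosable with a sufficient $K^*$ satisfying $K^*(e) \le 4$, so
$${\rm coe}(x_{uv}\, x_{vw}\, x^{K^*}, P_G) = \pm\, {\rm coe}(x^{K^*}, P_{G'}) \ne 0,$$
and the extension $K^{**}$ of $K^*$ defined by $K^{**}(uv) = K^{**}(vw) = 1$ witnesses that $G$ is algebraic $(1,5)$-choosable, a contradiction. The remaining case $G = P_4$ is checked by expanding $P_{P_4} = -x_{vw}^2(x_{uv} - x_{wz})$ and reading off the monomial $x_{uv} x_{vw}^2$ with coefficient $-1$.

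For part (2), suppose $v_1, v_2$ are adjacent degree $2$ vertices with other neighbors $u_1, u_2$; by part (1), $d_G(u_i) \ge 2$. Set $a = x_{u_1 v_1}$, $b = x_{u_2 v_2}$, $c = x_{v_1 v_2}$, let $T_i'$ be the sum of variables on edges at $u_i$ other than $\{u_i, v_i\}$ (with $T' = T_1' = T_2'$ in the triangle case $u_1 = u_2$), and consider $G^* = G - v_1 - v_2$. The factor for $\{v_1, v_2\}$ in $P_G$ is $(a - b)$, which kills $P_G|_{a = b = 0}$; however, writing $P_G = f \cdot g$ with $f$ the product of the three factors touching $v_1$ or $v_2$, only the $(\partial_a f) \cdot g$ term of the product rule survives at $a = b = 0$ (since $f|_{a = b = 0} = 0$), yielding
$$\left.\frac{\partial P_G}{\partial a}\right|_{a=b=0} = Q(c)\, P_{G^*},$$
where $Q(c) = (T_1' - c)(T_2' - c)$ (or $(T' - c)^2$ in the triangle case). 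Since the coefficient of $c^2$ in $Q(c)$ equals $1$, for any sufficient $K^*$ for $G^*$ with $K^*(e) \le 4$,
$${\rm coe}(a^1 b^0 c^2 x^{K^*}, P_G) = {\rm coe}(x^{K^*}, P_{G^*}) \ne 0,$$
giving a sufficient $K^{**}$ for $G$ with $K^{**}(a) = 1$, $K^{**}(b) = 0$, $K^{**}(c) = 2$, and all entries $\le 4$. A short analysis using part (1) and niceness shows that $G^*$ fails to be nice only when $G$ is the paw graph (a triangle with a pendant edge) or $C_4$; each is verified algebraic $(1,3)$-choosable by direct inspection of $P_G$ (for instance, for the paw, the monomial $x_{uv_1}^2 x_{uv_2} x_{v_1 v_2}$ has nonzero coefficient).

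The main obstacle is the vanishing of $P_G|_{a = b = 0}$ in part (2) caused by the $(a - b)$ factor, which forces the derivative trick; one must carefully check that the $f \cdot (\partial_a g)$ term of the product rule also vanishes at $a = b = 0$ (it does, because the $(a - b)$ factor in $f$ vanishes there), leaving the clean identity above. A routine but essential secondary point is the enumeration of small bad graphs: any new isolated edge in $G^*$ must either involve a degree $1$ neighbor of some $u_i$ (which part (1) forbids whenever $d_G(u_i) = 2$) or the edge $\{u_1, u_2\}$ with both endpoints of degree $2$, while any other potential isolated edge would already be isolated in $G$; this leaves only the paw and $C_4$ as sporadic obstructions.
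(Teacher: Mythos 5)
Your proof is correct and takes essentially the same approach as the paper: delete the two special vertices, apply minimality of the counterexample to the smaller graph, extend the resulting sufficient multi-index by weights $(1,1)$ in part (1) and $(2,1,0)$ in part (2), and handle the same small exceptional graphs ($P_4$, the paw, $C_4$) directly. Where the paper states ``it is easy to verify that $\per(C_G(K))\ne 0$'', you supply the polynomial-factorization identity (including the derivative trick needed for part (2)) that justifies the extension, but the underlying argument coincides with the paper's.
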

\begin{proof}
	(1) Assume $G$ has a degree 1 vertex $v$ adjacent to a degree 2 vertex $u$. If $G'=G-\{u,v\}$ has an isolated edge, then it is easy to see that $G-\{u,v\}$ is a single edge and hence $G$ is path of length $3$, which is known to be algebraic $(1,3)$-choosable. Assume $G'$ has no isolated edges. By the minimality of $G$, 
	  $G'$ is algebraic $(1,5)$-choosable. 
	Assume $K' \in \mathbb{N}_{|E(G')|}^{E(G')}$ such that $\per(C_{G'}(K')) \ne 0$ and $  K'(e) \le 4$ for $e \in E(G')$. 
	 Let $e_1=\{u,v\}$ and $e_2=\{u,w\}$, where $w$ is a vertex in $G'$.  Let $K \in \mathbb{N}_{|E(G)|}^E$ be defined as $K(e)=K'(e)$ for $e \in E(G')$ and $K(e_1)=K(e_2) =1$. It is easy to verify that   $\per(C_{G}(K)) \ne 0$. Hence $G$ is algebraic $(1,5)$-choosable. 
	
	(2) Assume $G$ has two adjacent  degree 2 vertices $v$ and $u$. If $G'=G-\{u,v\}$ has an isolated edge, then by using (1), we conclude that $G'$ is either a $C_4$ or a triangle $\{u,v,w\}$ plus a degree 1 vertex adjacent to $w$. It is easy to verify the graph is algebraic $(1,3)$-choosable. Assume $G'$ has no isolated edges. By the minimality of $G$, 
	$G'$ is algebraic $(1,5)$-choosable. 
	Assume $K' \in \mathbb{N}_{|E(G')|}^{E(G')}$ such that $\per(C_{G'}(K')) \ne 0$ and $\max K' \le 4$. 
	Let $e_1=\{u,v\}$,  $e_2=\{u,w\}$, $e_3=\{v,w'\}$, where $w, w'$ are (not necessarily distinct)  vertices in $G'$.  Let $K \in \mathbb{N}_{|E(G)|}^E$ be defined as $K(e)=K'(e)$ for $e \in E(G')$ and $K(e_1)=2, K(e_2) =1$ and $K(e_3)=0$. It is easy to verify that   $\per(C_{G}(K)) \ne 0$. Hence $G$ is algebraic $(1,5)$-choosable.   
\end{proof}

\begin{lemma}
	\label{lem-leaf}
	Assume $e =\{i,i'\} \in E_{J,2}$. If $N_{G_{J,3}}(i)  \ge 2$, then $i$ has at least 2 non-special neighbours in $J$.
\end{lemma}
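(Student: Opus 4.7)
The plan is to argue by contradiction, using the minimality of $|J|$. Assume $i$ has at most one non-special neighbour in $J$; since $|N_{G_{J,3}}(i)|\ge 2$ by hypothesis and Lemma~\ref{lem-minimum} forces $|N_{G_{J,3}}(i)|\le 2$ whenever $i$ has a special neighbour, we have $|N_{G_{J,3}}(i)|=2$; write $N_{G_{J,3}}(i)=\{j_1,j_2\}$ with $j_1$ special. Unpacking the definition of special together with J3 and J4 yields $N_G(j_1)=\{i,i_1^*\}$ where $i_1^*$ is the unique private neighbour of $j_1$, so $d_G(j_1)=2$. Lemma~\ref{lem-folklore} then forces $d_G(i_1^*)\ge 3$; in particular $i_1^*\ne i'$.

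In the first case, $j_2$ is also special, with private neighbour $i_2^*$, and the vertices $\{i,i',i_1^*,i_2^*,j_1,j_2\}$ are pairwise distinct. I take $J':=J\setminus\{j_1,j_2\}$, which is nonempty because otherwise J5 for $J$ would demand $N(i')\cap J\ne\emptyset$ yet $N(i')\cap J\subseteq N(j_1)\cup N(j_2)=\{i,i_1^*,i_2^*\}$ contains no $J$-vertex. Verifying J1--J5 for $J'$ reduces to the key observation that each $x\in\{i,i_1^*,i_2^*\}$ has $N(x)\cap J'=\emptyset$: this rules out new private neighbours in $G_{J',3}$ (giving J3 and preserving J4), prevents any isolated edge from arising in $G_{J',3}$ (any such edge would have an endpoint in $\{i,i_1^*,i_2^*\}$, which has no $J'$-neighbour), and ensures J5 since the vertices potentially losing their J5-certificates lie in $\{i,i_1^*,i_2^*\}$ and have $V-J$-degree $\ge 2$. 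This gives a strictly smaller good subset, contradicting the minimality of $|J|$.

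In the second case, $j_2$ is non-special, and I sub-divide based on whether $j_2$ has a private neighbour and whether $j_2$ lies in an $E_{J,4}$-edge (these two options are mutually exclusive by J4), together with $d_{G_{J,3}}(j_2)$. If $d_{G_{J,3}}(j_2)=1$ then $j_2$ has no $E_{J,4}$-edge (any certificate would have to be $i$, forcing the partner to be $j_1\not\sim j_2$), so $N_G(j_2)=\{i\}$, and $J\setminus\{j_1,j_2\}$ is good by the Case A argument. If $d_{G_{J,3}}(j_2)\ge 2$, $j_2$ has no private, and $j_2$ has no $E_{J,4}$-edge, then $J\setminus\{j_1\}$ is good (the only new private is $i$ accruing to $j_2$, benign since $j_2$ had none before). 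Otherwise ($j_2$ has a private, or $\{j_2,j_2'\}\in E_{J,4}$), I use $J\setminus\{j_1,j_2\}$; when $\{j_2,j_2'\}$ is present, the new edge $\{j_2,j_2'\}\in E_{J',3}$ is non-isolated because the J4-certificate $i_e$ supplies $j_2'$ with a further $V-J$-neighbour.

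The main obstacle is ruling out a residual bad configuration in the $J\setminus\{j_1,j_2\}$ sub-cases: an edge $\{a,y\}$ in $G_{J',3}$ with $y$ a $V-J$-neighbour of $j_2$ distinct from the J4-certificate (if any), $N(y)\cap J=\{j_2,a\}$, and $a\in J\setminus\{j_1,j_2,j_2'\}$ with $d_{G_{J,3}}(a)=1$, could be isolated. Applying J4 to any hypothetical $E_{J,4}$-edge at $a$ forces its certificate to be $y$ and its $J$-partner to be $j_2$, yielding $a=j_2'$ and contradicting $a\ne j_2'$; hence $a$ is a leaf of $G$. By the minimality of $G$ as counterexample, $G-a$ is nice and algebraic $(1,5)$-choosable; extending a certificate from $G-a$ to $G$ by $K(\{a,y\})=1$ preserves non-vanishing of the permanent (using the leaf-structure), yielding a $(1,5)$-certificate of $G$ and contradicting its counterexample status. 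All sub-cases together yield the desired contradiction.
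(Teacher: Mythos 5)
Your strategy is genuinely different from the paper's: the paper proves the lemma by exploiting the minimality of $G$ as a counterexample (it deletes the two-vertex set $\{i,j\}$, where $j$ is the special neighbour so $d_G(j)=2$ and $d_G(i)=3$, checks via Lemma~\ref{lem-folklore} that $G'=G-\{i,j\}$ has no isolated edge, and then extends a permanent certificate of $G'$ to $G$ using the explicit block structure of $C_G(K)$), whereas you try to exploit the minimality of $|J|$ by exhibiting a strictly smaller good subset $J'$. This is a reasonable idea in principle, and your Case~1 (both $j_1,j_2$ special) is essentially correct. However, there are two genuine gaps in your argument.

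First, and most seriously, in the ``residual bad configuration'' you reduce to a leaf $a$ of $G$ adjacent to $y$ with $d_G(y)\ge 3$, and then assert that ``extending a certificate from $G-a$ to $G$ by $K(\{a,y\})=1$ preserves non-vanishing of the permanent (using the leaf-structure).'' This is not justified and is not analogous to the permanent expansions that do work. In Lemma~\ref{lem-folklore}(1) the degree-$1$ vertex is adjacent to a degree-$2$ vertex $u$, so the row of $C_G(K)$ indexed by $\{u,v\}$ has a \emph{single} nonzero entry and one gets a cascading Laplace expansion reducing to $\per(C_{G'}(K'))$. Here $y$ has degree $\ge 3$, so the row (and column) indexed by $\{a,y\}$ has at least two nonzero entries; equivalently, $P_G|_{x_{\{a,y\}}=0}=\pm\bigl(\sum_{e\in E(y)\setminus\{\{a,y\}\}}x_e\bigr)P_{G-a}$, and the coefficient of $x^K$ in the product is a \emph{sum} of several coefficients of $P_{G-a}$ which may cancel. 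You would need a separate argument (and none is offered) to show that some choice of $K$ with all entries $\le 4$ survives. Second, in your sub-case where $j_2$ is non-special with a private neighbour, you pass to $J'=J\setminus\{j_1,j_2\}$; but there $|J|$ may equal $2$, in which case $J'=\emptyset$ is not a good subset, and you do not rule this out. (You do check $J'\neq\emptyset$ in Case~1 and in the $d_{G_{J,3}}(j_2)=1$ sub-case, but the private-neighbour sub-case is not covered by the same reasoning since $j_2$ there has more $V-J$ neighbours.) Because of these gaps the proposal does not constitute a complete proof; the paper's route via deleting $\{i,j\}$ and the explicit $3\times 3$ all-ones block avoids both difficulties.
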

\begin{proof}
	Assume $i$ has at most one non-special neighour in $J$. Since $i$ has at least 2 neighbours in $J$, we know that $i$ has a special neighbour $j \in J$. Let $i''$ be the private neighbour of $j$. By Lemma \ref{lem-minimum}, $d_{G_{J,3}}(i) = 2$. As $\{i,i'\}$ is an isolated edge in $G-J$, we know that $d_G(i) =3$. Let $j'$ be the other neighbour of $i$ in $J$. Let $G'=G-\{i,j\}$. Then $G'$ and $G$ are as depicted in Figure \ref{fig-ij}.
	
	\begin{figure}[!htb]
		\centering
		\includegraphics[scale=0.65]{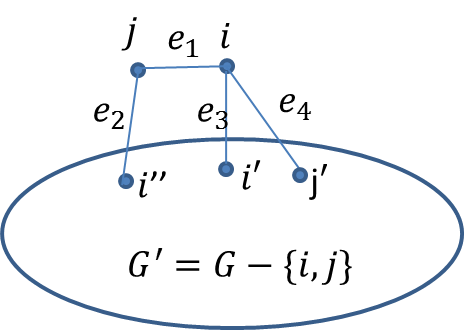}
		\caption{The graph $G$ and $G'$. }\label{fig-ij}
	\end{figure}
	
	If $G'$ has an isolated edge $e$, then $e$ must be incident to $i',i''$ or $j'$. If $e$ is incident to exactly one of $i',i''$ and $j'$, say $e=\{i',w\}$, then $G$ has a degree 1 vertex $w$ adjacent to a degree 2 vertex $i'$, contrary to the discussion above.  
	
	If $e$ connects two of $i',i''$ and $j'$, say $e=\{i',i''\}$, then $G$ has two adjacent degree 2 vertices $i', i''$, again contrary to the discussion above.

	So $G'$ has no isolated edges.
	By the minimality of $G$, $G'$ is algebraic  $(1,5)$-choosable. By (\ref{eqn1b}), there exists $K'=(k'_e)_{e \in E'} \in \mathbb{N}_{|E'|}^{E'}$ with $ K'(e)  \le 4$ for all $e \in E'$ such that 
	$$\per(C_{G'}(K')) \ne 0.$$
	Let $K  \in \mathbb{N}_{|E|}^E$ be defined as 
	\[
	K(e) = \begin{cases} K'(e), &\text{ if $e \in E'$}, \cr
	3, &\text{if $e =e_1$}, \cr
	1, &\text{if $e =e_2$}, \cr
	0, &\text{ if $e \in \{e_3, e_4\}$}.
	\end{cases}
	\] 
	Then 
	\[
	C_G(K)=\left[\begin{array}{c | c |c} 
	C_{G'}(K') &  \star & {\bf 0} \\ \hline
	{\bf 0} & 1 & {\bf 0} \\ \hline
	\star & {\bf 0} & J
	\end{array}
	\right],
	\]  
	where $J$ is a $3 \times 3$ all $1$ matrix, whose rows are indexed by $e_2, e_3, e_4$ and its columns are all indexed by $e_1$. 
	The middle row is indexed by $e_1$ and the middle column is indexed by $e_2$.  
	
	(The $\star$ in the upper-middle of $C_G(K)$ is an $|E'| \times 1$ matrix, and the 
	$\star$ in the 
	left-bottom of $C_G(K)$ is a $3 \times |E'|$ matrix, whose entries are unknown).
	
	Therefore $K \in \mathbb{N}_{|E|}^E$ has $ K(e) \le 4$ for all $e \in E$ and $\per(C_G(K)) \ne 0$. By (\ref{eqn1b}) again, $G$ is algebraic  $(1,5)$-choosable, a contradiction.		
	
	This completes the proof of Lemma \ref{lem-leaf}.
\end{proof}

\begin{definition}
	\label{def-j2cover}
	An {\em   assignment} for $J$ is a mapping $\tau: J \to V-J$ such that 
	for each $j \in J$, $\tau(j)$ is a non-private neighbour of $j$.
\end{definition}

Since $G_{J,3}$ has no isolated edges, and each vertex $j \in J$ has at most one private neighbour, it follows that if $j$ has a private neighbour, then $j$ has a non-private neighbour.
So an  assignment for $J$ always exists.

\begin{definition}
	\label{def-goodassign}
	An assignment   $\tau$   for $J$ is {\em good} if 
	for any $i \in V(G_{J,2})$ with $d_{G_{J,3}}(i) \ge 2$,   $i$ has a neighbour $j \in J$ such that   either $j$ has no priviate neighbour or   $\tau(j) \ne i$. 		
\end{definition}

\begin{lemma}
	\label{lem-goodassign}
	There is a good    assignment for $J$.
\end{lemma}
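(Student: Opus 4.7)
The plan is to reduce the existence of a good $\tau$ to a single bipartite $b$-matching, after peeling off the $j$'s whose values do not matter. Write $J' \subseteq J$ for the set of vertices having a private neighbour and $N^*(j)$ for the set of non-private neighbours of $j$ (conditions J2 and J3 together guarantee $|N^*(j)| \ge 1$ for every $j \in J$). For $j \in J \setminus J'$, the value $\tau(j)$ is irrelevant to goodness, since the ``no private neighbour'' clause is automatic for every $i$ adjacent to $j$; pick any non-private neighbour. For $j \in J'$ with $|N^*(j)| = 1$, $\tau(j)$ is forced to the unique non-private neighbour, but $j$ is then special for that vertex and hence never appears as a non-special neighbour of any $i$, so such $j$ can never be the witness we need. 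Only the choices on $J^* := \{j \in J' : |N^*(j)| \ge 2\}$ remain.

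Let $I^* = \{i \in V(G_{J,2}) : d_{G_{J,3}}(i) \ge 2, \ N_J(i) \subseteq J'\}$ be the set of $i$'s for which the goodness condition is non-trivial (any other $i$ already has a neighbour in $J \setminus J'$ which automatically witnesses it). For $i \in I^*$, Lemma \ref{lem-leaf} supplies at least two non-special neighbours $B(i) \subseteq N_J(i)$; since $N_J(i) \subseteq J'$, every $j \in B(i)$ has $|N^*(j)| \ge 2$ and thus lies in $J^*$, giving $B(i) \subseteq J^*$ with $|B(i)| \ge 2$. For $j \in J^*$, the equation $\tau(j) = i$ can hold only when $i \in N^*(j)$, equivalently $j \in B(i)$. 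Hence the goodness requirement on $\tau|_{J^*}$ becomes: choose $\tau(j) \in N^*(j)$ for every $j \in J^*$ so that $|\tau^{-1}(i)| \le |B(i)| - 1$ for every $i \in I^*$.

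This is a standard bipartite $b$-matching problem on the graph $H$ with parts $J^*$ and $V^* = \bigcup_{j \in J^*} N^*(j)$ and edges $(j,v)$ whenever $v \in N^*(j)$, with demand $1$ at each $j \in J^*$ and capacity $b(i) = \deg_H(i) - 1$ at $i \in I^*$ (and $b(v) = \infty$ for $v \in V^* \setminus I^*$). By the defect form of Hall's theorem, it suffices to check that for every $S \subseteq J^*$ with $N_H(S) \subseteq I^*$, $\sum_{i \in N_H(S)} (\deg_H(i) - 1) \ge |S|$. Counting edges gives $\sum_{i \in N_H(S)} \deg_H(i) \ge \sum_{j \in S} \deg_H(j) \ge 2|S|$, using $|N^*(j)| \ge 2$ for $j \in J^*$; hence the left side is at least $2|S| - |N_H(S)|$, which is $\ge |S|$ when $|N_H(S)| \le |S|$, and otherwise each term $\deg_H(i) - 1 \ge 1$ (since $|B(i)| \ge 2$) pushes the sum up to $|N_H(S)| > |S|$. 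The main technical step is spotting the reformulation: once the bipartite $b$-matching is identified, the two lower degree bounds — $|B(i)| \ge 2$ from Lemma \ref{lem-leaf} and $|N^*(j)| \ge 2$ from the definition of $J^*$ — make Hall's condition fall out immediately.
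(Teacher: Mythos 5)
Your proof is correct, and it takes a genuinely different route from the paper's. The paper builds an auxiliary labeled multi-graph $H$ on $I=\{i\in V-J: d_{G_{J,3}}(i)\ge 2\}$ with parallel edges labeled by common $J$-neighbours, then works component-by-component: if a component contains a vertex that is not in $V(G_{J,2})$ or has a $J$-neighbour with no private neighbour, it is oriented with that vertex as the unique sink and $\tau$ read off from transitive tournaments on the label classes; otherwise Lemma~\ref{lem-leaf} is used to extract a cycle with distinct labels, and all remaining edges are oriented toward that cycle. Your argument replaces this orientation/case analysis by a single bipartite $b$-matching: you first isolate the only nontrivial choices ($J^*$, the vertices of $J$ with a private neighbour and at least two non-private neighbours) and the only constrained sinks ($I^*$, the $G_{J,2}$-vertices all of whose $J$-neighbours lie in $J'$), observe that $B(i)=N_J(i)\cap J^*$ with $\deg_H(i)=|B(i)|$, and then verify the defect Hall condition by combining the two degree bounds $|N^*(j)|\ge 2$ for $j\in J^*$ and $|B(i)|\ge 2$ from Lemma~\ref{lem-leaf}. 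The trade-off is that the paper's proof is constructive and self-contained (one can literally build $\tau$ by orienting $H$), while yours is shorter, modular, and makes explicit where each hypothesis is used; both ultimately lean on the same structural input (Lemma~\ref{lem-leaf}), but your reformulation makes Hall's condition fall out by a two-line counting argument rather than a bespoke cycle-hunting construction.
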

\begin{proof}
		Let $I=\{i \in V-J: d_{G_{J,3}}(i) \ge 2\}$.   Let $H$ be the edge labeled multi-graph with vertex set $I$ and in which $i,i' \in I$ are joined  by $|N_{G_{J,3}}(i) \cap N_{G_{J,3}}(i')|$ parallel edges, and these edges are labeled by vertices in $N_{G_{J,3}}(i) \cap N_{G_{J,3}}(i')$. For $j \in J$, let $E_j$ be the set of edges in $H$ labeled by $j$ (which induce a clique). Note that $E_j = \emptyset$ if $j$ has only one neighbour $i$ in $V-J$, and in this case, $i$ is a non-private neighbour of $j$. 
		
		For $i,i' \in V(H)$, $d_H(i,i')$ denotes the distance between $i$ and $i'$.

	Let $B$ be a connected component of $H$.

	Assume $B$ contains a vertex $i$ such that 
	either $i \notin V(G_{J,2})$ or $i$
	has a neighbour $j \in J$ which has no private neighbour.
	Let $D$ be an orientation of $B$ in which $i$ is the only sink vertex and for each $j \in  N_{G_{J,3}}(B)$, the sub-digraph $D[E_j]$ of $D$ induced by $E_j$ is transitive tournament.  
	
For    $j \in   N_{G_{J,3}}(B)$, let  $\tau(j)=i'$, where either $E_j = \emptyset$ and $i'$ is the only non-private neighbour of $j$, or
 $E_j \ne \emptyset$, $i'$ is the sink in the transitive tournament $D[E_j]$.

  For any $i' \in B \cap  V(G_{J,2})$ with $d_{G_{J,3}}(i') \ge 2$, 
	either $i'=i$ and hence $i'$ has a neighbour $j \in J$ which has no priviate neighbour, or  
		$i'$ has an out-going edge $e$ in $D$.  In the latter case, assume $e$ is labeled by $j \in J$, then $i'$ is a non-private neighbour of $j$ and 
	$\tau(j) \ne i'$.

	% Hence the conclusion of Lemma \ref{lem-goodassign} is satisfied by vertices in $B$.   

	Assume for each   $i \in B \cap V(G_{J,2})$ and
   every neighbour $j \in  J$ of $i$ has a private neighbour.
	By Lemma \ref{lem-leaf}, each vertex $ i \in B$ has at least two non-special neighbours $j \in J$.
	 
	We construct a cycle $C$ in $B$ as follows: Starting from an arbitrary vertex $i_1$, go to the next vertex $i_2$, where $\{i_1, i_2\}$ is an edge   labeled by $j_1$. Assume we arrived at a vertex $i_t$ and the edge $\{i_{t-1}, i_t\}$  is labeled by $j_{t-1}$. Since $i_t$ has at least two non-special neighbours, let $j_t$ be a non-special neighbours of $i_t$ which is distinct from $j_{t-1}$. If $j_t$ is a non-special neighbour of $i_{t'}$ for some $t' \le t-1$, then 
	 $C=(i_{t'}, i_{t'+1}, \ldots, i_t)$ is a cycle in $B$, in which all edges of the cycle are labeled by distinct labels. Otherwise, let   $i_{t+1}$ be any vertex of $B$ so that the edge $\{i_t, i_{t+1}\}$ is labeled by $j_t$. Continuing this process, we will construct a   cycle $C=(i_1,i_2, \ldots, i_q)$ in $B$ (possibly  a 2-cycle), such that  all edges of $C$ are labelled by distinct vertices of $J$.
	  
Assume the edge connecting $i_t$ and $i_{t+1}$ is labeled by $j_t$. Let $\tau(j_t)=i_t$. Orient the remaining edges of $B$ as follows:
Assume $\{i_1,i_2\}$ is an edge in $B$.
If $d_H(i_2,C) < d_H(i_1,C)$ then orient the edge as $(i_1,i_2)$.  If  $d_H(i_2,C) = d_H(i_1,C)$ then orient the edge arbitrarily, subject to the condition that for any $j \in N_{G_{J,3}}(B) - \{j_1,j_2, \ldots, j_q\}$, the sub-digraph induced by edges in $E_j$ is a transitive tournament.  Note that in this orientation,  every   vertex in $B-\{i_1,i_2, \ldots, i_q\}$ has an out-going edge. For  $j \in N_{G_{J,3}}(B) - \{j_1,j_2, \ldots, j_q\}$, let $\tau(j)=i'$, where $E_j = \emptyset$ and $i'$ is the only non-private neighbour of $j$, or
$E_j \ne \emptyset$, $i'$ is the sink in   $D[E_j]$.
   
 For the same reason as in the previous paragraph, for any $i \in B \cap  V(G_{J,2})$,   
	$i$ has a neighbour $j \in J$ such that $i$ is a non-private neighbour of $j$ and $\tau(j) \ne i$. 
	% Hence the conclusion of Lemma \ref{lem-goodassign} is satisfied by vertices in $B$.   
	
	Process each component $B$ of $H$ as above, we obtain a good   assignment $\tau$ for $J$.   	  
\end{proof}

\begin{lemma}
	\label{lem-key}
	There is a $J$-covering family $\mathcal{C}$ such that  
 $K_{\mathcal{C}}(e) \le 4$ for $e \in E-E_{J,1}$ and $K_{\mathcal{C}}(e) =0$ for $e \in E_{J,1}$.
\end{lemma}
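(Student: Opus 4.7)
The plan is to refine the $J$-covering construction from Lemma \ref{lem-5}. Inspecting the five-case analysis at the beginning of this section, the only edges where $K_{\mathcal{C}}(e) = 5$ can occur are those $e = C_{e'} \in E_{J,3} - E'$ for some $e' \in E_{J,2}$; on each such edge I aim to reduce $K_{\mathcal{C}_{J,3}}(e)$ from $4$ down to $2$. This will be achieved by using Lemma \ref{lem-goodassign} to coordinate the choice of $\mathcal{C}_{J,2}$ with a thinning of the doubled path family $\mathcal{C}_{J,3} = 2\mathcal{C}'_{J,3}$.

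First I would fix a good assignment $\tau : J \to V - J$ via Lemma \ref{lem-goodassign}. For each $e' = \{i_1, i_2\} \in E_{J,2}$ I choose $C_{e'}$ as follows: if some endpoint $i$ of $e'$ has $d_{G_{J,3}}(i) = 1$, take $C_{e'}$ to be the unique edge from $i$ to its $J$-neighbour, which is covered by no $\mathcal{P}_{i'}$ and so contributes $0$ to $K_{\mathcal{C}_{J,3}}(C_{e'})$; otherwise both endpoints of $e'$ lie in $I$, and by Definition \ref{def-goodassign} applied to $i_1$ there exists $j \in N_{G_{J,3}}(i_1)$ with either $j$ having no private neighbour or $\tau(j) \ne i_1$, and I set $C_{e'} = \{i_1, j\}$. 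Then I build $\mathcal{P}_i$ (for $i \in I$) and $\mathcal{C}_{J,4}$ exactly as in Lemma \ref{lem-5}, and modify $\mathcal{C}'_{J,3}$ by deleting, for each $C_{e'} = \{i, j\} \notin E'$ with $i \in I$, one of the two paths of $\mathcal{P}_i$ incident to $j$. All deleted paths have length $2$, so the parity conditions in Definition \ref{def-pathfamily}(i) are automatically preserved.

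The main obstacle is reverifying the degree inequality $d_{\mathcal{C}_{J,3}}(j) \ge 2 d_{G_{J,3}}(j)$ after the deletions. Writing $k = d_{G_{J,3}}(j)$: when $j$ has no private neighbour, the original contribution ($4k$, or $4k - 2$ if $j \in V(E_{J,4})$) against the required $2k$ easily absorbs all deletions; when $j$ has a private neighbour, J4 gives $j \notin V(E_{J,4})$, the original contribution is $4(k-1)$, and the budget is exactly $2(k-2)$, i.e., $k-2$ deletions at $j$. The good assignment ensures that every direct pick $\{i, j\} = C_{e'}$ with $i \in I$ has $\tau(j) \ne i$, so such picks come from the $k-2$ non-private neighbours of $j$ other than $\tau(j)$; since $E_{J,2}$ is a matching, distinct picks come from distinct $i$'s, giving at most $k-2$ direct picks at $j$. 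The tight subcase is $k = 2$: here $j$'s unique non-private neighbour $i$ makes $j$ a special neighbour of $i$, so Lemma \ref{lem-minimum} forces $d_{G_{J,3}}(i) = 2$ and Lemma \ref{lem-leaf} then forces $i \notin V(G_{J,2})$; thus no deletion ever touches $i$, and $j$ incurs neither a direct pick nor a side-effect loss. For $k \ge 3$, any potential side effect at $j$ arising from a deletion at a neighbour $i' \in N_{G_{J,3}}(j)$ can be absorbed by choosing the cyclic ordering of $N_{G_{J,3}}(i')$ and the choice between $P_{i', l-1}$ and $P_{i', l}$ so as to route the non-pick endpoint of the deleted path away from $j$.

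A final case analysis on each $e \in E - E_{J,1}$ (split by whether $e \in E_{J,4}$, $e \in E'$, or $e \in E_{J,3} - E'$, and by whether $e$ coincides with some $C_{e'}$) yields $K_{\mathcal{C}}(e) \le 4$, while $K_{\mathcal{C}}(e) = 0$ on $E_{J,1}$ directly, completing the proof.
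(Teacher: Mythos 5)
Your plan has the right target — drive $K_{\mathcal{C}_{J,3}}(C_{e'})$ down from $4$ to $2$ using the good assignment — but the mechanism you use (post-hoc deletion of one path from the already-cyclic family $\mathcal{P}_{i_1}$) introduces a side-effect that you cannot always route away, and this side-effect is exactly what the paper's construction is designed to avoid.

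The core problem is that every path in $\mathcal{P}_{i_1}$ has two endpoints in $J$, so deleting a path $(j'', i_1, j''')$ lowers $d_{\mathcal{C}'_{J,3}}$ at \emph{both} $j''$ and $j'''$. Your claim that for $d_{G_{J,3}}(j)\ge 3$ the side-effect can be ``routed away by choosing the cyclic ordering and the choice between $P_{i',l-1}$ and $P_{i',l}$'' fails precisely when $d_{G_{J,3}}(i')=2$: then $\mathcal{P}_{i'}$ is two copies of the \emph{same} length-$2$ path $(j,i',j')$, and deleting one copy unavoidably subtracts $1$ from $d_{\mathcal{P}_{i'}}$ at both $j$ and $j'$. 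So whenever $j\in J$ has a private neighbour, $d_{G_{J,3}}(j)=k\ge 3$, and all $k-1$ non-private neighbours of $j$ lie in $V(G_{J,2})\cap I$ with $G_{J,3}$-degree $2$, each of them produces a unit loss at $j$ whichever endpoint it picks, giving a total loss of $k-1$ against a slack of only $k-2$. Lemmas \ref{lem-minimum} and \ref{lem-leaf} do not exclude this configuration for $k\ge 3$, and the good assignment $\tau$ constrains at most one of those neighbours (the one equal to $\tau(j)$), so it does not prevent the overload.

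The paper sidesteps this by not deleting at all: for $e'=\{i_1,i_2\}\in E_{J,2}$ with both endpoints in $I$, it rebuilds $\mathcal{P}_{i_1}$ as an \emph{open} family and either takes $\mathcal{P}_{i_2}$ cyclic (Case 1, when $j_{i_1,t_1}=j_{i_2,t_2}$) or inserts the length-$3$ path $(j_{i_1,t_1},i_1,i_2,j_{i_2,t_2})$ (Case 2). In either case the contribution of $i_1$'s family at the ``far'' endpoint $j_{i_1,t_1}$ stays at $2$, so the only vertex of $J$ that loses is the pick vertex $j_{i_1,1}$, and the good assignment guarantees that enough picks are barred at $j_{i_1,1}$ (via $\tau(j_{i_1,1})\ne i_1$) to keep the degree inequality. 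Your deletion approach has no analogue of this compensation, so the degree condition of Definition \ref{def-pathfamily}(ii) can genuinely fail. To salvage your argument you would need to add a structure that recovers the lost unit at the far endpoint — which is essentially the length-$3$ path the paper introduces.
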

\begin{proof}
	The $E_{J,4}$-covering family is constructed as in the proof of Lemma \ref{lem-5}. 
	
	The construction of the $E_{J,3}$-covering family $\mathcal{C}_{J,3}$ is modified below.
	
 For $i \in I$ which is not 
	incident to edges in $E_{J,2}$, we construct $\mathcal{P}_i$   as in the proof of Lemma \ref{lem-5}.
	
	Assume $e' = \{i_1,i_2\} \in E_{J,2}$ and $d_{G_{J,3}}(i_1),
	d_{G_{J,3}}(i_2) \ge 2$. 
	Assume
	$N_{G_{J,3}}(i_1)=\{j_{i_1,1}, j_{i_1,2}, \ldots, j_{i_1,t_1}\} $ and $N_{G_{J,3}}(i_2)=\{j_{i_2,1}, j_{i_2,2}, \ldots, j_{i_2,t_2}\}$. 
	Assume $\tau$ is a good assignment for $J$ and $\tau(j_{i_1,1}) \ne i_1$, $\tau(j_{i_2,1}) \ne i_2$.

	\noindent
	{\bf Case 1} 
	$j_{i_1,t_1} = j_{i_2,t_2}$.
	
	Let $\mathcal{P}_{i_1}$ consists of  the paths 
	$P_{i_1,l}=(j_{i_1,l}, i_1,j_{i_1,l+1})$ for $l=1,2,\ldots, t_1-1$. 
	Let $\mathcal{P}_{i_2}$ consists of   the paths 
	$P_{i_2,l}=(j_{i_2,l}, i_2,j_{i_2,l+1})$ for $l=1,2,\ldots, t_2$, 
	where we let $t_2+1=1$ in the indices.   (See Figure \ref{fig-pathfamily2}.)
	
	Let   $e=C_{e'} =  \{i_1, j_1\}$.
	
	\begin{figure}[!htb]
		\centering
		\includegraphics[scale=0.65]{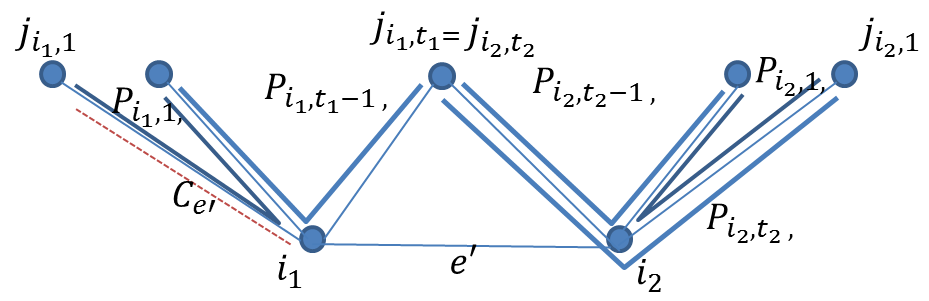}
		\caption{The path families $\mathcal{P}_{i_1}$ and $\mathcal{P}_{i_2}$ for an edge $e'=\{i_1,i_2\} \in E_{J,2}$ with $j_{i_1,t_1} = j_{i_2,t_2}$, and the 
			edge $C_{e'}=\{i_1,j_{i_1,1}\}$ for the edge $e'=\{i_1,i_2\} \in E_{J,2}$.  (The figure illustrates the case that $t_1=t_2=3$.)  }\label{fig-pathfamily2}
	\end{figure}
	
	In Case 1,
	the edge $\{i_1, j_{i_1, t_1}\}$ contributes $1$ to $d_{\mathcal{P}_{i_1} \cup \mathcal{P}_{i_2}}(j_{i_1,t_1})$ (and hence  contributes $2$ to $d_{\mathcal{C}_{J,3}}(j_{i_1,t_1})$)
	and the edge $\{i_2, j_{i_1, t_2}\}$ contributes $2$ to $d_{\mathcal{P}_{i_1} \cup \mathcal{P}_{i_2}}(j_{i_1,t_1})$ (and hence  contributes $4$ to $d_{\mathcal{C}_{J,3}}(j_{i_1,t_1})$). Note that $j_{i_1,t_1} = j_{i_2,t_2}$.

	So there is enough contribution from 
	$\{i_1, j_{i_1, t_1}\}$ and $\{i_2, j_{i_1, t_2}\}$ to $d_{\mathcal{C}_{J,3}}(j_{i_1,t_1})$
	to compensate the contribution to $d_{G_{J,3}}(j_{i_1,t_1})$ from a possible  private neighbour of $j_{i_1,t_1}$.

	\noindent
	{\bf Case 2} $j_{i_1,t_1} \ne j_{i_2,t_2}$.
	
	In this case, instead of constructing 
	$\mathcal{P}_{i_1}$ and $\mathcal{P}_{i_2}$, we construct $\mathcal{P}_{i_1,i_2}$ as follows:

	\begin{itemize}
		\item for $l=1,2,\ldots, t_1-1$,
		$\mathcal{P}_{i_1,i_2}$ contains    the path $P_{i_1, l} = (j_{i_1,l}, i_1, j_{i_1, l+1})$.
		\item For $l=1,2,\ldots, t_2-1$,
		$\mathcal{P}_{i_1,i_2}$ contains  the path $P_{i_2, l} = (j_{i_2,l}, i_2, j_{i_2, l+1})$.
		\item $\mathcal{P}_{i_1,i_2}$ contains  the path $P_{i_1,i_2} = (j_{i_1,t_1}, i_1,i_2, j_{i_2, t_2})$. (See Figure \ref{fig-pathfamily}.)
	\end{itemize} 
	It is easy to verify that   $$K_{\mathcal{P}_{i_1,i_2} }(e)=1, \forall e \in \{\{i_1,j_{i_1,1}\}, \{i_2,j_{i_2,1}\},$$
	and $$K_{\mathcal{P}_{i_1,i_2} }(e)=2, \forall e \in \{\{i_1,j_{i_1,2}\},\ldots,  \{i_1,j_{i_1,t_1}\}, \{i_2,j_{i_2,2}, \ldots, \{i_2,j_{i_2,t_2}\}\}.$$
	
	\begin{figure}[!htb]
		\centering
		\includegraphics[scale=0.65]{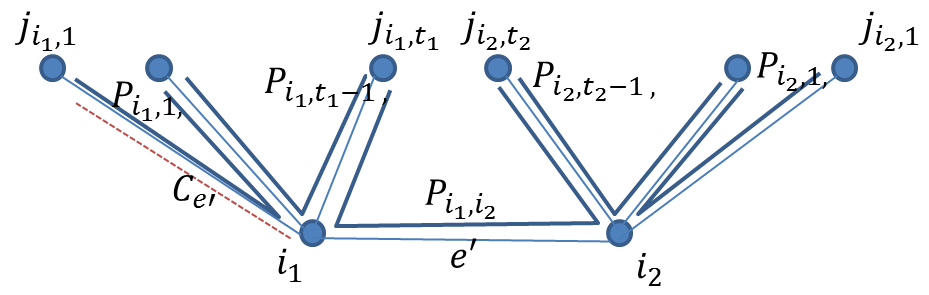}
		\caption{The path family $\mathcal{P}_{i_1,i_2}$ for an edge $e'=\{i_1,i_2\} \in E_{J,2}$, and the 
			edge $C_{e'}=\{i_1,j_{i_1,1}\}$ for the edge $e'=\{i_1,i_2\} \in E_{J,2}$.  (The figure illustrates the case that $t_1=t_2=3$.)  }\label{fig-pathfamily}
	\end{figure}

	Let $$E'_{J,2}= \{  \{i_1,i_2\} \in E_{J,2}: j_{i_1,t_1} \ne j_{i_2,t_2}\}.$$  
	Let $I'= I - V(E'_{J,2})$. 
	
	Let $$\mathcal{C}'_{J,3} = (\cup_{i \in I'} \mathcal{P}_i) \cup (\cup_{\{i_1,i_2\} \in E'_{J,2}}  \mathcal{P}_{i_1,i_2}), \text{ and } 
	\mathcal{C}_{J,3} = 2 \mathcal{C}'_{J,3}.$$

	For each edge $\{i_1,i_2\}$ of $E_{J,2}$, let $C_{e'}= \{i_1,j_{i_1, 1}\}$. Here   vertices in $N_{G_{J,3}}(i_1)$ are labeled as in the discussion above of Case 1 and Case 2.
	By the discussion above, for $e=C_{e'} =  \{i_1, j_1\}$, we have $K_{\mathcal{C}_{J,3}}(e)=2$
	and hence $K_{\mathcal{C}}(e)\le 4$.

If $d_{G_{J,3}}(i_1)=1$ or  $d_{G_{J,3}}(i_2)=1$, say
$N_{G_{J,3}}(i_1)=\{j\}$, then for the $E_{J,3}$-covering family 
$\mathcal{C}_{J,3}$ constructed above, $K_{\mathcal{C}_{J,3}}(e)=0$ for $e = \{i_1, j\}$. Let $C_{e'} =e$. Then $K_{\mathcal{C}}(e)=1$.

 For the $J$-covering family $\mathcal{C}$ constructed above, we have
   $K_{\mathcal{C}}(e) \le 4$ for $e \in E-E_{J,1}$ and $K_{\mathcal{C}}(e) =0$ for $e \in E_{J,1}$.
 This completes the proof of Lemma \ref{lem-key}.
\end{proof}

 Since $G$ is a minimum counterexample to Theorem \ref{thm-key}, $G_{J,1}$ is algebraic $(1,5)$-choosable.
 Following the proof of Theorem \ref{thm16}, using Lemma \ref{lem-key} instead of Lemma \ref{lem-5}, we conclude that $G$ is algebraic $(1,5)$-choosable, contrary to our assumption that   $G$ is a counterexample to Theorem \ref{thm-alg15}. This completes the proof of Theorem \ref{thm-alg15}.
 \qed

  \section{Proof of   Lemma \ref{lemma-key}}
  \label{sec-prooflemmakey}
  
  To prove Lemma \ref{lemma-key}, we need to find $F \in W_{E,|E|}^{K+K_{\mathcal{C}}}$ so that 
  $$\langle F, Q_E \rangle \ne 0.$$
  
  Note that $E$ is the disjoint union of $E_{J,1}, E_{J,2},E_{J,3}$ and $E_{J,4}$. So 
  $$Q_E = Q_{E_{J,1}}Q_{E_{J,2}}Q_{E_{J,3}}Q_{E_{J,4}}.$$
  Let $m_i = |E_{J,i}|$ for $i=1,2,3,4$.
  
  We shall find $$F_1 \in W_{E. m_1}^K, F_2 \in W_{E,m_2}^{K_{\mathcal{C}_{J,2}}}, F_3 \in W_{E,m_3}^{K_{\mathcal{C}_{J,3}}}, F_4 \in W_{E,m_4}^{K_{\mathcal{C}_{J,4}}},$$ and let  
  $F=F_1F_2F_3F_4$. Since $\mathcal{C} = \mathcal{C}_{J,2} \cup \mathcal{C}_{J,3} \cup \mathcal{C}_{J,4}$,  it follows that   $$F =F_1F_2F_3F_4 \in W_{E, |E|}^{K+K_{\mathcal{C}}}.$$
   
For an edge $e = \{i,j\}$ of $G$, let $F_e = x_i+x_j$. 
The polynomial $F_1$ will follow from the assumption of the lemma (explained below).
Each of the polynomials $F_2,F_3,F_4$ will be a product of 
linear combinations of $F_e$'s for the edges in the subgraphs in   $\mathcal{C}_{J, 2},\mathcal{C}_{J, 3}$ and $\mathcal{C}_{J, 4}$, respectively. 

Recall that in the definition of a $(J,3)$-covering family $\mathcal{C}_{J, 3}$, it is required that for each $j \in J$,  $$d_{\mathcal{C}_{J,3}}(j) \ge 2 d_{G_{J,3}}(j).$$
For $j \in J$, let $d_j= d_{\mathcal{C}_{J,3}}(j) - 2 d_{G_{J,3}}(j)$. Let $E'_j =\{  \{i_l,j\}: l = 1,2, \ldots, d_j\}$ (parallel edges are allowed, i.e., $i_l$ need not be distinct vertices ). Let $$E'_{J,3} = E_{J,3} \cup (\cup_{j \in J}E'_j).$$

\begin{lemma}
	\label{lem-add2}
	Assume $E \subseteq E' \subseteq {[n] \choose 2}$ and $K \in \mathbb{N}^{E'}$. If there exists $F \in W_{E, |E'|}^{K}$ for which $\langle F, Q_{E'} \rangle \ne 0$, then there exists 
	$F' \in W_{E, |E|}^{K}$ such that 
	$$\langle F', Q_E \rangle \ne 0.$$
\end{lemma}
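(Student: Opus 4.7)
The plan is to exploit the classical duality between multiplication and differentiation under the Bombieri (apolar) inner product used here. Since $E \subseteq E'$, factor $Q_{E'} = Q_E \cdot Q_{E'\setminus E}$. For a real polynomial $g$, write $g(\partial)$ for the differential operator obtained by substituting $\partial/\partial x_v$ for every occurrence of $x_v$ in $g$. A direct monomial computation against the definition of $\langle\cdot,\cdot\rangle$ yields the adjoint identity
$$\langle F, g H \rangle \;=\; \langle g(\partial) F, H \rangle$$
whenever $g$ has real coefficients: testing on $F = x^K$, $g = x^{K_1}$, $H = x^{K_2}$, both sides equal $(K_1+K_2)!$ when $K = K_1+K_2$ and vanish otherwise, via $\partial^{K_1} x^{K_1+K_2} = \frac{(K_1+K_2)!}{K_2!}\, x^{K_2}$.

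Specializing $g = Q_{E'\setminus E}$ and $H = Q_E$ (both real), I set $F' := Q_{E'\setminus E}(\partial)\, F$. The adjoint identity combined with the factorization of $Q_{E'}$ immediately gives
$$\langle F', Q_E\rangle \;=\; \langle F, Q_E \cdot Q_{E'\setminus E}\rangle \;=\; \langle F, Q_{E'}\rangle \;\ne\; 0,$$
so the non-vanishing transfers for free. What remains is the membership claim $F' \in W_{E, |E|}^K$.

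By linearity it suffices to verify this when $F = H_E^{K'}$ for a single $K' \in \mathbb{N}_{|E'|}^E$ with $K' \le K$. For any vertex $v$, the product rule gives
$$\partial_{x_v} H_E^{K'} \;=\; \sum_{e \in E,\ v \in e} K'(e)\, H_E^{K' - \mathbf{1}_e},$$
which lies in $W_{E, |E'|-1}^K$ since each $K' - \mathbf{1}_e \le K' \le K$. Hence one partial derivative maps $W_{E, m}^K$ into $W_{E, m-1}^K$, and iterating through each of the $|E'\setminus E|$ first-order factors appearing in every monomial of $Q_{E'\setminus E}(\partial)$ shows that $Q_{E'\setminus E}(\partial)$ sends $W_{E, |E'|}^K$ into $W_{E, |E|}^K$. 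The only subtlety I anticipate is the bookkeeping around the conjugation on the second slot of the inner product, but this is harmless because $Q_E$ and $Q_{E'\setminus E}$ have integer coefficients, so no conjugation corrections arise.
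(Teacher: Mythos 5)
Your proof is correct, and it takes a genuinely different route from the paper. The paper translates $\langle F, Q_{E'} \rangle \ne 0$ into a permanent statement $\per(C_{G'}(K')) \ne 0$ via the identity $\langle H_E^{K'}, Q_{E'} \rangle = \per(C_{G'}(K'))/K'!$, then performs a Laplace expansion of this permanent along the $k=|E'|-|E|$ rows indexed by $E'-E$, writing $\per(C_{G'}(K')) = \sum_I \per(M_I)\per(M'_I)$ and concluding some $\per(M'_I) = \per(C_G(K''))$ is nonzero with $K'' \le K' \le K$. Your argument instead stays entirely on the polynomial side: it factors $Q_{E'} = Q_E\, Q_{E'\setminus E}$ and invokes the apolarity/Bombieri adjointness $\langle F, gH\rangle = \langle g(\partial)F, H\rangle$ for real $g$, giving an explicit $F' = Q_{E'\setminus E}(\partial)F$, with the membership $F' \in W_{E,|E|}^K$ checked by the product-rule formula $\partial_{x_v}H_E^{K'} = \sum_{e\ni v}K'(e)H_E^{K'-\mathbf{1}_e}$. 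The verification of the adjoint identity on monomials and the remark that integer coefficients make the conjugation in the second slot harmless are both sound. What your route buys is an explicit formula for $F'$ and a proof that never leaves the inner-product framework; the paper's route buys elementarity (a single Laplace expansion of a permanent) at the cost of a non-constructive existence step over the index set $I$. Both are equally valid proofs of the lemma.
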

\begin{proof}
	Let $G'=(V,E')$, which is obtained from $G$ by adding edges in $E'-E$.
	Assume $|E'| = m$ and $|E'|-|E|=k$. 
	Now $\langle F, Q_{E'} \rangle \ne 0$ implies that 
	there exists $K'  \in \mathbb{N}_m^E$ such that 
	$ K' \le K$ and $\per(C_{G'}(K')) \ne 0$. 
	
	Note that $C_{G'}(K')$ is an $m \times m$ matrix. 
	For each set $I$ of $k$ columns of $C_{G'}(K')$, let  $M_I$ be the $k\times k$ submatrix of $C_{G'}(K')$ consisting of the   columns in $I$ and the rows indexed by edges in $E'-E$. Let 
	$M'_I$ be the $|E|\times |E|$ submatrix of $C_{G'}(K')$ obtained by deleting the rows and columns of $M_I$. We have
	$$\per(C_G(K')) = \sum \per(M_I) \per(M'_I)$$
	where the summation is over all choices $I$ of $k$ columns of $C_G(K')$.
	So $\per(M'_I) \ne 0$ for some choice $I$. 
	The matrix $M'_I$ equals $C_G(K'')$ for some $K'' \le K' \le K$.  So there exists $F' \in W_{E, |E|}^{K}$ such that $\langle F', Q_E \rangle \ne 0.$
\end{proof}

In Lemma \ref{lem-add2}, it is crucial that $F \in W_{E,|E'|}^K$ (not in $W_{E', |E'|}^K$). The added edges add more constraint inequalities, but do not add more variables. 
By using Lemma \ref{lem-add2}, in the following, we assume that for each $j \in J$,   $d_{\mathcal{C}_{J,3}}(j) = 2 d_{G_{J,3}}(j).$

\begin{definition}
	 The polynomials $F_1,F_2,F_3,F_4$ are constructed as follows:
 \begin{enumerate}
 	\item   By our assumption, $K$ is sufficient from $G_{J,1}$. So there exists $F_1 \in W_{E, m_1}^K$ such that $$\langle F_1, Q_{E_{J,1}} \rangle \ne 0.$$
 	
 	\item Recall that for each edge $e \in E_{J,2}$, $C_e$ is an edge in $E_{J,3}$ adjacent to $e$.    Let $$F_2 = \prod_{ e   \in E_{J,2}}F_{C_e} \in W_{E, m_2}^{K_{\mathcal{C}_{J,2}}}.$$
 	
 	\item  For each path $P \in \mathcal{C}_{J,3}$, if the two end vertices of $P$ are $i$ and $j$, with $i < j$, we let $s(P)=i$ and $t(P)=j$. Let $\ell(P)$ be the length of $P$ (i.e., the number of edges in $P$), and for $l=1, \ldots, \ell(P)$, let $e_l(P)$ be the $l$th edge of $P$. Let
 	$$F_3 = \prod_{P \in \mathcal{C}_{J,3}}(x_{s(P)} + (-1)^{\ell(P)-1}x_{t(P)}) =  \prod_{P \in \mathcal{C}_{J,3}} \sum_{l=1}^{\ell(P)} (-1)^{l-1}F_{e_l(P)} \in W_{E, m_3}^{K_{\mathcal{C}_{J,3}}}.$$	
 	
 	\item For each $e=\{j_1,j_2\} \in E_{J,4}$, choose $j_e \in \{j_1, j_2\}$. Let $C_e \in \mathcal{C}_{J,4}$ be the odd cycle containing $e$. For $l=1, \cdots, \ell(C_e)$, let $e_l(C_e)$ be the $l$th edge of $C_e$, and and we choose the starting vertex of $C_e$ so that   $e_1(C_e)$ and $e_{\ell(C_e)}$ are the two edges of $C_e$ incident to $j_e$. Let
 	$$F_4 = \prod_{e \in E_{J,4}}x_{j_e} = \prod_{e \in E_{J,4}} \frac 12   \sum_{l=1}^{\ell(C_e)} (-1)^l F_{e_l(C_e)}  \in W_{E, m_4}^{K_{\mathcal{C}_{J,4}}}.$$
 \end{enumerate}
\end{definition}

It remains to show that 
\begin{eqnarray}
\label{ineq}
 \langle F_1F_2F_3F_4, Q_{E_{J,1}}Q_{E_{J,2}}Q_{E_{J,3}}Q_{E_{J,4}} \rangle \ne 0.
\end{eqnarray}

In the construction of $F_4$, we need to choose $j_e \in \{j_1,j_2\}$ for each edge $e = \{j_1,j_2\} \in E_{J,4}$. The choice is not arbitrary. It suffices to prove that there exists a choice so that   Inequality (\ref{ineq}) holds. 

Observe that $F_1$ and $Q_{E_{J,1}}$ are polynomials in variables $\{x_i: i \in V(G_{J,1})\}$, and none of the other polynomials in (\ref{ineq}) involves variables in $\{x_i: i \in V(G_{J,1})\}$. Therefore 
$$\langle F_1F_2F_3F_4, Q_{E_{J,1}}Q_{E_{J,2}}Q_{E_{J,3}}Q_{E_{J,4}} \rangle = \langle F_1, Q_{E_{J,1}} \rangle \langle F_2F_3F_4, Q_{E_{J,2}}Q_{E_{J,3}}Q_{E_{J,4}} \rangle.$$
By assumption,  $\langle F_1, Q_{E_{J,1}} \rangle \ne 0$. 
So it suffices to show that $\langle F_2F_3F_4, Q_{E_{J,2}}Q_{E_{J,3}}Q_{E_{J,4}} \rangle \ne 0.$

Note that $Q_{E_{J,2}}$ is a polynomial with variables in $\{x_i: i \in V(G_{J,2})\}$. None of $F_3,F_4$ involves variables in $\{x_i: i \in V(G_{J,2})\}$. For $e \in E_{J,2}$, $C_e = \{i_e, j_e\} \in \mathcal{C}_{J,2}$, $i_e \in V(G_{J,2})$ and $j_e \in J$. Thus  
$$F_2 = \prod_{e \in E_{J,2}} x_{i_e} + F'$$
where $F'$ is a polynomial of degree $|E_{J,2}|$ with each monomials contains positive power of some variable $x_j$ for $j \in J$. Since $Q_{E_{J,2}}$ has degree  $|E_{J,2}|$, the contribution of $F'$ to 
$\langle F_2F_3F_4, Q_{E_{J,2}}Q_{E_{J,3}}Q_{E_{J,4}} \rangle$ is zero. Thus 
$$\langle F_2F_3F_4, Q_{E_{J,2}}Q_{E_{J,3}}Q_{E_{J,4}} \rangle = 
\langle \prod_{e \in E_{J,2}} x_{i_e}, Q_{E_{J,2}} \rangle \langle  F_3F_4,  Q_{E_{J,3}}Q_{E_{J,4}} \rangle.$$
It is easy to see that $$\langle \prod_{e \in E_{J,2}} x_{i_e}, Q_{E_{J,2}} \rangle = {\rm coe}(\prod_{e \in E_{J,2}} x_{i_e}, Q_{E_{J,2}} ) = \pm 1.$$
So it remains to show that 
$$\langle  F_3F_4,  Q_{E_{J,3}}Q_{E_{J,4}} \rangle \ne 0.$$

For each edge $e \in E_{J,3}$, let $j_e \in J$ be the end vertex of $e$ in $J$. Then  $$Q_{E_{J,3}} = \prod_{e \in E_{J,3}}x_{j_e} + F',$$ where each monomial of $F'$ has a positive power of some variable $x_i$ with $i \in V(G)-J$. As $F_3F_4$ are polynomials in variables $\{x_j: j \in J\}$, the contribution of $F'$ to $\langle  F_3F_4,  Q_{E_{J,3}}Q_{E_{J,4}} \rangle$ is zero. So 
$$\langle  F_3F_4,  Q_{E_{J,3}}Q_{E_{J,4}} \rangle = \langle  F_3F_4,   Q_{E_{J,4}}\prod_{e \in E_{J,3}}x_{j_e}  \rangle.$$

Assume for $j,j'\in J$, $j< j'$, $\mathcal{C}_{J,3}$ consists of $2t_{jj'}^+$ even length paths connecting $j$ and $j'$, and $2t_{jj'}^-$ odd length paths connecting $j$ and $j'$. 
It follows from the definition that 
$$F_3 = \prod_{j,j' \in J, j < j'} (x_j-x_{j'})^{2t_{jj'}^+}(x_j+x_{j'})^{2t_{jj'}^-}.$$
Since  for each $j \in J$,    $d_{\mathcal{C}_{J,3}}(j) = 2 d_{G_{J,3}}(j)$, it follows that 
$$\prod_{e \in E_{J,3}}x_{j_e} = \prod_{j,j' \in J, j < j'} (x_jx_{j'})^{t_{jj'}^+ + t_{jj'}^-}.$$

  Let $$\phi = \prod_{j,j' \in J, j<j'} (x_j-x_{j'})^{2t_{jj'}^+}(x_j+x_{j'})^{2t_{jj'}^-}, \text{ and } \psi = \prod_{j,j' \in J, j<j'} (x_jx_{j'})^{t_{jj'}^+ + t_{jj'}^-}.$$ 
   Recall that in the definiton of $F_4$, we need to choose $j_e \in \{j_1,j_2\}$ for each edge $e=\{j_1,j_2\}$ of $E_{J,4}$. For any choice, the  polynomial $F_4$ is a monomial of $Q_{E_{J,4}}$, and any monomial of $Q_{E_{J,4}}$ corresponds to $F_4$ determined by one such choice.
    Therefore to prove Lemma \ref{lemma-key}, it suffices to show that
  there is a monomial $x^K$ of $Q_{E_{J,4}}$ such that 
  $$\langle \phi x^K, \psi Q_{E_{J,4}} \rangle \ne 0.$$ This follows from the following lemma.

  \begin{lemma}
  	\label{lem3}
  	For any $R(x) \in \mathbb{C}[x_1, \ldots, x_n]$, there exists $x^K \in {\rm mon}( R(x))$ such that 
  		$$\langle \phi x^K ,  \psi R(x)   \rangle \ne 0.$$
  \end{lemma}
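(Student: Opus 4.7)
The plan is to deduce the existence of the required monomial from the stronger scalar statement $\langle \phi R, \psi R\rangle \ne 0$ whenever $R \ne 0$. Writing $R = \sum_{K \in {\rm mon}(R)} c_K x^K$ and using sesquilinearity of the Fischer inner product in the first slot,
\[
\langle \phi R, \psi R\rangle \;=\; \sum_{K \in {\rm mon}(R)} c_K \,\langle \phi x^K, \psi R\rangle,
\]
so if the left-hand side is nonzero, some summand must be nonzero and the corresponding $x^K \in {\rm mon}(R)$ witnesses the lemma.

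Thus the entire task becomes proving $\langle \phi R, \psi R\rangle \ne 0$ for every nonzero $R$. To do this I would exploit the two structural features of the data: $\phi = \phi_0^2$ is a perfect square with $\phi_0 = \prod_{j<j'}(x_j-x_{j'})^{t^+_{jj'}}(x_j+x_{j'})^{t^-_{jj'}}$, and $\psi = x^D$ is a single monomial whose exponent satisfies $D_j = \sum_{j' \ne j}(t^+_{jj'}+t^-_{jj'})$. The Fischer-adjoint identity $\langle pf, g\rangle = \langle f, \overline p(\partial) g\rangle$ applied to one factor of $\phi_0$ converts the target into
\[
\langle \phi R, \psi R\rangle \;=\; \langle \phi_0 R,\; \phi_0(\partial)(\psi R)\rangle.
\]
Writing $\psi = x^D$ and using the elementary commutator $[\partial_i, x^D] = D_i\, x^{D - e_i}$ repeatedly, a Leibniz expansion of $\phi_0(\partial)(x^D R)$ should display it as $\pm\phi_0 R$ plus corrections involving higher partial derivatives of $R$, so that the inner product acquires a dominant contribution $\pm\|\phi_0 R\|^2$.

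The main obstacle is to show that these correction terms cannot conspire to cancel the dominant one. My plan is an induction on the total multiplicity $w := \sum_{j<j'}(t^+_{jj'}+t^-_{jj'})$. The base case $w = 0$ has $\phi = \psi = 1$ and the claim reduces to $\langle R, R\rangle = \|R\|^2 > 0$. For the inductive step I would peel off a single squared linear factor $L_e^2$ from $\phi_0^2$ together with its matching $x_j x_{j'}$ in $\psi$, dropping $D_j$ and $D_{j'}$ each by one and reducing $w$ by one; the adjoint manipulation then rewrites the original inner product in terms of a strictly smaller instance of the same form, plus an error that I would control via Cauchy--Schwarz on the Fischer norm while tracking signs using that each $L_e^2$ is pointwise nonnegative on $\mathbb{R}^{|J|}$. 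Propagating non-vanishing through the reduction then yields Lemma \ref{lem3}.
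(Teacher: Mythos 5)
Your opening reduction is sound: by linearity of $\langle\cdot,\cdot\rangle$ in the first slot, $\langle\phi R,\psi R\rangle=\sum_{K}c_K\langle\phi x^K,\psi R\rangle$, so nonvanishing of the left side gives the lemma. But this turns the task into the \emph{strictly stronger} claim $\langle\phi R,\psi R\rangle\ne 0$, which is not what the paper proves and which you have not established. The paper deliberately avoids this claim. It introduces a second, unweighted inner product $(f,g)=\sum_K {\rm coe}(x^K,f)\overline{{\rm coe}(x^K,g)}$, which (unlike the Fischer product $\langle\cdot,\cdot\rangle$) has a Fourier-integral representation over the torus; from that representation one gets $(-1)^T(\phi R,\psi R)>0$ because each torus factor $2\pm 2\cos(\theta_j-\theta_{j'})\ge 0$. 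One then argues by contradiction: if $\langle\phi x^{K_i},\psi R\rangle=0$ for every $K_i\in{\rm mon}(R)$, one shows (Claim 1, reduced to non-singularity of the Gram-type matrix $A$, itself proven by the same integral positivity) that there are coefficients $\beta_i$ with $\langle\phi\sum\beta_i x^{K_i},\psi R\rangle=(\phi R,\psi R)\ne 0$, a contradiction. This passage between $\langle\cdot,\cdot\rangle$ and $(\cdot,\cdot)$ is the heart of the argument and is absent from your proposal.

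Your adjoint/Leibniz strategy for proving the stronger statement has a concrete flaw. The claim that $\phi_0(\partial)(\psi R)$ equals $\pm\phi_0 R$ plus terms with derivatives of $R$ fails as soon as a variable appears in more than one linear factor of $\phi_0$. Take $\phi_0=(x_1-x_2)(x_1-x_3)$, $\psi=x_1^2x_2x_3$; then $\phi_0(\partial)\psi=x_1^2-2x_1x_2-2x_1x_3+2x_2x_3$, which is not a scalar multiple of $\phi_0=x_1^2-x_1x_2-x_1x_3+x_2x_3$. So there is no ``dominant contribution $\pm\|\phi_0 R\|^2$'' to begin with. Even in favourable cases, bounding the remaining terms by Cauchy--Schwarz in the Fischer norm does not obviously yield the needed strict inequality — there is no smallness built in. Finally, the pointwise nonnegativity of $L_e^2$ on $\mathbb{R}^{|J|}$ (or, what is actually used, of $2\pm2\cos(\theta_j-\theta_{j'})$ on the torus) gives positivity for the integral representation of $(\cdot,\cdot)$, not for $\langle\cdot,\cdot\rangle$; the Fischer product is represented by a Gaussian integral over $\mathbb{C}^n$, where $\phi\bar\psi$ is no longer a nonnegative multiple of a fixed phase, so the sign argument does not transfer.
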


  In the proof of Lemma \ref{lem3}, we need to use another inner product in the complex vector space $\mathbb{C}[x_1, \ldots, x_n]_m$, which is defined as follows:
  
  $$(f,g) = \sum_{K \in \mathbb{N}^n_m} {\rm coe}(x^K, f)\overline{ {\rm coe}(x^K, g)}.$$

  Assume $P=(p_1,\ldots, p_n), Q=(q_1, \ldots, q_n) \in \mathbb{N}^n_m$. It is straightforward to verify that 
  \begin{eqnarray*}
  	(x^P,x^Q) =  (2\pi)^{-n} \int_{\theta_1=0}^{2\pi} \ldots \int_{\theta_n=0}^{2\pi} \prod_{j=1}^n  e^{i p_j\theta_j}    \overline { \prod_{j=1}^n {e^{iq_j \theta_j} }}d\theta_1 \ldots d\theta_n  = \begin{cases} 1, &\text{ if $P=Q$},\cr
  		0, &\text{ otherwise}. 
  	\end{cases}    
  \end{eqnarray*} 
  
  Hence the inner product   $(f,g)$ can be alternately defined as 
  \begin{eqnarray}
  \label{eqn-innerproduct}
  (f,g) = (2\pi)^{-n} \int_{\theta_1=0}^{2\pi} \ldots \int_{\theta_n=0}^{2\pi}f( e^{i \theta_1},\ldots, e^{i\theta_n}) \overline {g({e^{i\theta_1},\ldots, e^{i\theta_n})}}d\theta_1 \ldots d\theta_n.
  \end{eqnarray}
  
  The following lemma follows easily from the definitions of the two inner products.
  
  \begin{lemma}
  	\label{lem-added}
  	Assume $f,g \in \mathbb{C}[x_1, \ldots, x_n]_m$. Let $\tilde{f} \in \mathbb{C}[x_1, \ldots, x_n]_m$ be a polynomial such that for each $x^K \in {\rm mon}(g)$, 
  	${\rm coe}(x^K, \tilde{f}) = \frac{1}{K!} {\rm coe}(x^K, f)$. Then $$\langle \tilde{f},g \rangle = ( f,g).$$
  \end{lemma}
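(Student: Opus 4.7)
The plan is to prove the identity by direct expansion of both inner products on the monomial basis of $\mathbb{C}[x_1,\ldots,x_n]_m$, exploiting the fact that the hypothesis on $\tilde{f}$ only constrains its coefficients at monomials that actually appear in $g$, which is exactly where we need information.

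First I would write out the definition of $\langle \tilde{f}, g\rangle$ as
\[
\langle \tilde{f}, g\rangle = \sum_{K \in \mathbb{N}^n_m} K!\, {\rm coe}(x^K, \tilde{f})\,\overline{{\rm coe}(x^K, g)}.
\]
Next I would split this sum into two pieces: the terms with $x^K \in {\rm mon}(g)$ and the terms with $x^K \notin {\rm mon}(g)$. By the very definition of ${\rm mon}(g)$, the second piece vanishes termwise, since ${\rm coe}(x^K, g) = 0$ whenever $x^K \notin {\rm mon}(g)$. So only the first piece survives.

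Then I would substitute the hypothesis ${\rm coe}(x^K, \tilde{f}) = \tfrac{1}{K!}{\rm coe}(x^K, f)$ (valid exactly for $x^K \in {\rm mon}(g)$) into the surviving terms. The factors of $K!$ cancel, giving
\[
\langle \tilde{f}, g\rangle = \sum_{K:\, x^K \in {\rm mon}(g)} {\rm coe}(x^K, f)\,\overline{{\rm coe}(x^K, g)}.
\]
Finally, I would extend the range of summation back to all of $\mathbb{N}^n_m$; this is costless because the added terms again have $\overline{{\rm coe}(x^K, g)} = 0$. The resulting expression is exactly the definition of $(f, g)$, completing the proof.

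There is essentially no obstacle here; the lemma is a bookkeeping identity, and the only substantive point is noticing that the hypothesis on $\tilde{f}$, although seemingly weaker than demanding ${\rm coe}(x^K, \tilde{f}) = \tfrac{1}{K!}{\rm coe}(x^K, f)$ for all $K$, is already enough because unmatched coefficients are annihilated by the factor $\overline{{\rm coe}(x^K, g)}$ in the inner product $\langle \cdot,\cdot\rangle$.
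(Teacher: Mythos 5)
Your proof is correct and is exactly the argument the paper intends when it says the lemma ``follows easily from the definitions of the two inner products'': expand $\langle \tilde{f},g\rangle$ on the monomial basis, discard the terms killed by $\overline{{\rm coe}(x^K,g)}=0$, substitute the hypothesis so the $K!$ cancels, and recognize the result as $(f,g)$. No issues.
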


  \noindent
  {\bf Proof of Lemma \ref{lem3}}
  	Assume ${\rm mon}(R(x)) = \{x^{K_i}: i=1,2,\ldots, l\}$ and $R(x) = \sum_{i=1}^l \alpha_i x^{K_i}$. 
  	Assume to the contrary that for each $1 \le i \le l$, 
  	$$\langle \phi(x)x^{K_i}, \psi(x) R(x)\rangle = 0.$$
  	Then for any $(\beta_i)_{ 1 \le i \le l} \in \mathbb{C}^l$,  
  		$$\langle \phi(x)\sum_{i=1}^l \beta_ix^{K_i}, \psi(x) R(x)\rangle = 0.$$
  	Note that for $1 \le i \le l$, $ \psi(x) x^{K_i}$ is also a monomial, which we denote by 
  	$x^{{\tilde K}_i}$. Then $${\rm mon}(\psi(x)R(x)) = \{x^{{\tilde K}_i}: i=1,\ldots, l \}.$$
  	
  \begin{claim}
  	\label{claim1} There exist $ (\beta_i)_{ 1 \le i \le l} \in \mathbb{C}^l$ such that for $1 \le j \le l$, 
  	$${\rm coe}\left(x^{\tilde{K}_j}, \phi(x) \sum_{i=1}^l \beta_ix^{K_i}\right) =\frac{1}{\tilde{K}_j!}{\rm coe}\left(x^{\tilde{K}_j}, \phi(x) {R(x)}\right). $$
  \end{claim}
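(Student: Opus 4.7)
The plan is to prove Claim \ref{claim1} by reformulating it as a surjectivity statement for a linear map, and establishing surjectivity via an integral positivity argument.

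First I would observe that the $l$ equations of Claim \ref{claim1} form an $l \times l$ linear system in the unknowns $\beta=(\beta_i)_{i=1}^l$. Setting $U = {\rm span}\{x^{K_i} : 1 \le i \le l\}$ and $\psi U = {\rm span}\{x^{\tilde K_i} : 1 \le i \le l\}$, the claim amounts to showing that the linear map
$$T : U \to \psi U, \qquad T(h) = \sum_{j=1}^l {\rm coe}(x^{\tilde K_j}, \phi h) \, x^{\tilde K_j}$$
hits a prescribed target. Since $\psi$ is a nonzero monomial, multiplication by $\psi$ is injective, so $\dim U = \dim \psi U = l$, and hence it suffices to prove that $T$ is injective (equivalently surjective, by equality of dimensions). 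Once $T$ is surjective the desired $\beta$ exists for any target, in particular the one demanded by Claim \ref{claim1}.

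For injectivity, suppose $h = \sum_i \beta_i x^{K_i} \in U$ satisfies $T(h) = 0$, so ${\rm coe}(x^{\tilde K_j}, \phi h) = 0$ for every $j$. I would then compute $(\phi h, \psi h)$ in two ways. From the coordinate definition of $(\cdot,\cdot)$, since ${\rm mon}(\psi h) \subseteq \{x^{\tilde K_j} : 1 \le j \le l\}$ with ${\rm coe}(x^{\tilde K_j}, \psi h) = \beta_j$, one immediately obtains $(\phi h, \psi h) = \sum_j {\rm coe}(x^{\tilde K_j}, \phi h)\, \overline{\beta_j} = 0$. On the other hand, using the integral form (\ref{eqn-innerproduct}) together with $\overline{\psi(e^{i\theta})} = 1/\psi(e^{i\theta})$ (which holds because $\psi$ is a monic monomial and $|e^{i\theta_j}| = 1$), one has
$$(\phi h, \psi h) = (2\pi)^{-n} \int_{[0,2\pi]^n} \frac{\phi(e^{i\theta})}{\psi(e^{i\theta})} \, |h(e^{i\theta})|^2 \, d\theta_1 \cdots d\theta_n.$$

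The key calculation is the evaluation of $\phi/\psi$ on the torus. Using
$$\frac{(e^{i\theta_j} - e^{i\theta_{j'}})^2}{e^{i\theta_j} e^{i\theta_{j'}}} = -4 \sin^2\!\left(\frac{\theta_j - \theta_{j'}}{2}\right), \qquad \frac{(e^{i\theta_j} + e^{i\theta_{j'}})^2}{e^{i\theta_j} e^{i\theta_{j'}}} = 4 \cos^2\!\left(\frac{\theta_j - \theta_{j'}}{2}\right),$$
the ratio equals $(-1)^{\sum_{j<j'} t^+_{jj'}}$ times a nonnegative real function that vanishes only on a measure-zero subset of the torus. Since $h$ is a nonzero polynomial, $|h(e^{i\theta})|^2$ is likewise nonzero off a measure-zero set (a nonzero polynomial cannot vanish identically on the torus, by Fourier uniqueness for exponentials). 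Therefore the integral above equals a fixed nonzero sign times a strictly positive quantity, contradicting $(\phi h, \psi h) = 0$. Hence $h = 0$, so $T$ is injective, thus surjective, and Claim \ref{claim1} follows. The main obstacle is the explicit computation of $\phi/\psi$ on the torus and the verification that its sign is constant across the torus; the remaining steps reduce to standard dimension and measure-zero reasoning.
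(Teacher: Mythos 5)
Your proof is correct and takes essentially the same route as the paper: both reduce Claim~\ref{claim1} to the invertibility of the same $l\times l$ matrix and establish this by evaluating $(\phi h, \psi h)$ (the paper's $yAy^*=(\phi F_y,\psi F_y)$) via the integral formula (\ref{eqn-innerproduct}), extracting the sign $(-1)^T$, and observing that the remaining nonnegative integrand is not almost-everywhere zero when $h\ne 0$. The only difference is one of framing: you phrase it as injectivity of the map $T$ and derive $(\phi h,\psi h)=0$ from $T(h)=0$ via the coordinate definition of the inner product, while the paper directly shows the Hermitian form $yAy^*$ is sign-definite.
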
 
  	
  Assume Claim \ref{claim1} is true. By Lemma \ref{lem-added}, $$  \langle \phi(x)\sum_{j=1}^l \beta_jx^{K_j}, \psi(x) R(x)\rangle = (\phi(x) {R(x)}, \psi(x) R(x) ).$$ However, let $T = \sum_{	j < j', j,j' \in J} t_{jj'}^+$, we have 	 
  	\begin{eqnarray*}
  &&	(-1)^{T} 	(\phi(x) {R(x)}, \psi(x)R(x)) \\
  &=& (2\pi)^{-n}  \int_{\theta_1=0}^{2\pi}
  		\ldots  \int_{\theta_n=0}^{2\pi} 	(-1)^{T}  \phi(e^{i\theta_1}, \ldots, e^{i\theta_n}) \overline{\psi(e^{i\theta_1}, \ldots, e^{i\theta_n})} |R(e^{i\theta_1}, \ldots, e^{i\theta_n})|^2d\theta_1 \ldots d\theta_n\\
  			&=& (2\pi)^{-n} 
  			\int_{\theta_1=0}^{2\pi}
  			\ldots  \int_{\theta_n=0}^{2\pi} 
  			\prod_{j,j' \in J, j < j'} \left( 
  			- \frac{ (e^{i\theta_j}-e^{i\theta_{j'}})^2} { e^{i\theta_j}e^{i\theta_{j'}}}\right)^{t_{jj'}^+}  \\
  			&& \left(  \frac{ (e^{i\theta_j} +e^{i\theta_{j'}})^2}{   e^{i\theta_j} e^{i\theta_{j'}}} \right)^{t_{jj'}^-}  
  			 |R(e^{i\theta_1}, \ldots, e^{i\theta_n} )|^2d\theta_1 \ldots d\theta_n\\
  				&=&   (2\pi)^{-n} 
  				\int_{\theta_1=0}^{2\pi}
  				\ldots  \int_{\theta_n=0}^{2\pi} 
  				\prod_{j,j' \in J, j < j'} \left(  2-2\cos (\theta_j-\theta_{j'})\right)^{t_{jj'}^+} \\
  				&& \left(   2+2\cos (\theta_j-\theta_{j'})  \right)^{t_{jj'}^-}  
  				 |R(e^{i\theta_1}, \ldots, e^{i\theta_n} )|^2d\theta_1 \ldots d\theta_n\\
  		& > & 0, 
  	\end{eqnarray*}
  	a contradiction. 
  	
  	It remains to prove Claim \ref{claim1}.
  	Let $A=(a_{ij})_{l \times l}$, where for $1 \le i, j \le l$, 
  	$$a_{ij} =   (    \phi(x) x^{K_i},\psi(x) x^{K_j} ) =  (    \phi(x) x^{K_i},  x^{\tilde{K}_j} ) ={\rm coe}(x^{\tilde{K}_j}, \phi(x) x^{K_i} ).  $$ 
  	Let $b = (\frac{1}{  \tilde{K}_i!}{\rm coe}(x^{\tilde{K}_i}, \phi(x){R(x)}))_{1 \le i \le l}$. Claim \ref{claim1} is equivalent to say that there exists $\beta \in \mathbb{C}^l$ such that  $$A \beta = b.$$ 
  Thus to prove Claim \ref{claim1}, it suffices to show that   $A$ is non-singular. We prove this by showing that $y A  y^* \ne 0$ for any non-zero $y \in \mathbb{C}^l$.      For any non-zero vector $y \in \mathbb{C}^l$,  let   $$F_{y}(x) = \sum_{i=1}^l y_i x^{K_i}.$$  Then
  	\begin{eqnarray*}
  		y A  y^* &=& \sum_{1 \le i,j \le l} y_i \overline{y_j}  (  \phi(x)  x^{K_i},\psi(x) x^{K_j} ) \\
  		&=&      (    \phi(x) F_{y}(x),  \psi(x)  F_{y}(x) )\\ 
  		&=&  (2\pi)^{-n} \int_{\theta_1=0}^{2\pi} \ldots \int_{\theta_n=0}^{2\pi}    \prod_{i,j \in J, i < j}   (e^{i \theta_i} -e^{i\theta_j})^{2t_{ij}^+}  (e^{i\theta_i} +e^{i\theta_j})^{2t_{ij}^-}\\
  		&& \overline{( e^{i \theta_i} e^{i\theta_j})^{t_{ij}^++ t_{ij}^-} } |F_{y}(e^{i\theta_1}, \ldots, e^{i\theta_n} )|^2 d \theta_1 \ldots d \theta_n \\ 
  			&=&   (2\pi)^{-n} \int_{\theta_1=0}^{2\pi} \ldots \int_{\theta_n=0}^{2\pi}    \prod_{i,j \in J, i < j}   
  			\left(  \frac{ (e^{i \theta_i} -e^{i\theta_j})^2} {e^{i \theta_i} e^{i\theta_j}} \right)^{t_{ij}^+}
  			\left( \frac{ (e^{i \theta_i} + e^{i\theta_j})^2} {e^{i \theta_i} e^{i\theta_j}} \right)^{t_{ij}^-}			\\
  			&&  |F_{y}(e^{i\theta_1}, \ldots, e^{i\theta_n} )|^2 d \theta_1 \ldots d \theta_n \\ 
  		&=& (-1)^{T} (2\pi)^{-n} 
  		\int_{\theta_1=0}^{2\pi} \ldots \int_{\theta_n=0}^{2\pi}   
  		 \prod_{i,j \in J, i < j} \left( 2 - 2 \cos (\theta_i-\theta_j)\right)^{t_{ij}^+}\left( 2 + 2 \cos (\theta_i-\theta_j)\right)^{t_{ij}^-} \\ 
  		&&  |F_{y} 
  			(e^{i\theta_1}, 
  			\ldots, e^{i\theta_n} )|^2 
  			d \theta_1 \ldots d \theta_n.
  	\end{eqnarray*}
  	So $	y A  y^* > 0$ if $T$ is even and $	y A  y^* < 0$ if $T$ is odd. 
 In any case, $ 	y A  y^* \ne 0$.
  	This completes the proof of Claim \ref{claim1}, as well as Lemma \ref{lem3}.
  \qed

\end{document}